\pgfplotsset{width=7cm,compat=1.8}
\newtheorem{theorem}{Theorem}[section]
\newtheorem*{theorem*}{Theorem}
\newtheorem{definition}[theorem]{Definition}
\theoremstyle{plain}
\newtheorem{corollary}[theorem]{Corollary}
\newtheorem*{corollary*}{Corollary}
\newtheorem{lemma}[theorem]{Lemma}
\newtheorem{proposition}[theorem]{Proposition}
\newcounter{mt}
\newtheorem{MainTheorem}[mt]{Theorem}
\newtheorem{MainCorollary}[mt]{Corollary}
\newtheorem*{lemma*}{Lemma}
\newtheorem*{question*}{Question}
\theoremstyle{definition}
\newtheorem{example}[theorem]{Example}
\newcommand{\tpitchfork}{%
  \vbox{
    \baselineskip\z@skip
    \lineskip-.52ex
    \lineskiplimit\maxdimen
    \m@th
    \ialign{##\crcr\hidewidth\smash{$-$}\hidewidth\crcr$\pitchfork$\crcr}
  }%
}
\newcommand{\NN}{\mathbb{N}}
\newcommand{\RR}{\mathbb{R}}
\newcommand{\Conv}{\mathrm{Conv}}
\newcommand{\ConvO}{\mathrm{Conv}_{(0)}}
\newcommand{\LC}{\mathrm{LC}}
\DeclareMathOperator{\dom}{dom}
\DeclareMathOperator{\epi}{epi}
\newcommand{\GW}{\mathrm{GW}}
\newcommand{\MeasC}{\mathcal{M}_c}
\newcommand{\Meas}{\mathcal{M}}
\DeclareMathOperator{\cent}{cent}
\newcommand{\DistribC}{\mathcal{D}_c'}
\newcommand{\Distrib}{\mathcal{D}'}
\newcommand{\convexbodies}{\mathcal{K}^n}
\newcommand{\unitsurfn}{{\mathbb{S}^{n-1}}}
\DeclareMathOperator{\GL}{GL}
\DeclareMathOperator{\SO}{SO}
\DeclareMathOperator{\id}{id}
\DeclareMathOperator{\supp}{supp}
\DeclareMathOperator{\interior}{int}
\DeclareMathOperator{\Diffbody}{D}
\newcommand{\MongAmp}[2]{\mathrm{MA}(#1;#2)}
\title{Equivariant Endomorphisms of Convex Functions}
\author{Georg C. Hofst\"atter}
\author{Jonas Knoerr}
\email{georg.hofstaetter@aon.at}
\email{jonas.knoerr@tuwien.ac.at}
\address{School of Mathematical Sciences, Tel Aviv University, Tel Aviv 69978, Israel}
\address{Institut für Diskrete Mathematik und Geometrie, TU Wien, 1040 Wien, Austria}
\thanks{GH was supported by the European Research Council (ERC)
under the European Union’s Horizon 2020 research and innovation programme (grant agreement No 770127) and by the Austrian Science Fund
(FWF), Project number: P31448-N35.}
\begin{document}

\begin{abstract}
 Characterizations of all continuous, additive and $\GL(n)$-equivariant endomorphisms of the space of convex functions on a Euclidean space $\RR^n$, of the subspace of convex functions that are finite in a neighborhood of the origin, and of finite convex functions are established. Moreover, all continuous, additive, monotone endomorphisms of the same spaces, which are equivariant with respect to rotations and dilations, are characterized. Finally, all continuous, additive endomorphisms of the space of finite convex functions of one variable are characterized.
\end{abstract}

\maketitle

\section{Introduction}

The study of additive functions on the space $\convexbodies$ of convex bodies, that is, the space of all non-empty, convex and compact subsets of $\RR^n$, has a long history. Here, a map $\mu:\convexbodies\rightarrow (G,+)$ into an Abelian semi-group $(G,+)$ is called additive if
\begin{align*}
	\mu(K+L)=\mu(K)+\mu(L),\quad\text{for all }K,L\in\convexbodies,
\end{align*}
where $K+L=\{x+y: x\in K,y\in L\}$ is the Minkowski sum of $K,L\in\convexbodies$. Functions of this type include the mean width (with real values) and the Steiner point (with values in $\RR^n$), which are both essentially uniquely characterized by this property, continuity (with respect to the Hausdorff metric), and certain compatibility properties with respect to rigid motions, as was shown in \cites{Shephard1968, Schneider1971}.

\medskip

Initiated by Schneider in a seminal paper \cite{Schneider1974c}, additive endomorphisms of $\convexbodies$, that is, additive maps on $\convexbodies$ with values in $\convexbodies$, have been the focus of intense research in the past 50 years \cites{Abardia2018, Dorrek2017b, Hofstaetter2021, Kiderlen2006,Ludwig2005, Schneider1974b, Schuster2007}.
The first result in this direction is the following classification of the \emph{difference body} $\Diffbody K:=K+(-K)$ for $K\in\convexbodies$.

\begin{theorem}[Schneider~\cite{Schneider1974c}]\label{thm:SchneiderGLEquiv}
A map $\Phi: \convexbodies \rightarrow \convexbodies$ is continuous, Minkowski addi\-ti\-ve, $\GL(n)$-equivariant, that is, $\Phi(\eta K) = \eta \Phi(K)$ for $K \in \convexbodies$ and $\eta \in \GL(n)$, and translation-invariant if and only if $\Phi = c\Diffbody$ for some $c \geq 0$. 
\end{theorem}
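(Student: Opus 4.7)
My plan is to pass to support functions and then apply representation-theoretic tools on the sphere. The assignment $K \mapsto h_K$, with $h_K(u) = \sup_{x \in K}\langle x, u\rangle$, converts Minkowski sum into pointwise sum and Hausdorff convergence into uniform convergence on $\unitsurfn$. Hence $\Phi$ induces a continuous additive map on the cone of support functions, which extends uniquely by linearity and continuity to a bounded operator $T$ on the (dense) linear span of support functions inside $C(\unitsurfn)$. Translation invariance becomes $T|_{\mathcal{H}_1} = 0$, since $h_{K+t} - h_K = \langle t, \cdot\rangle \in \mathcal{H}_1$; and $\GL(n)$-equivariance becomes the intertwining relation $T \circ \eta^{*} = \eta^{*} \circ T$ for every $\eta \in \GL(n)$, where $\eta^{*}$ denotes pullback by $\eta^\top$ on $1$-homogeneous extensions.

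Next I would pin down $T$ on highly symmetric inputs. Since the stabilizer of the Euclidean ball $B$ in $\GL(n)$ is $\OO(n)$, $\Phi(B) = cB$ for some $c \geq 0$. By Schur's lemma applied to the spherical harmonic decomposition $C(\unitsurfn) = \bigoplus_{k \geq 0} \mathcal{H}_k$ together with the $\OO(n)$-equivariance of $T$, one obtains $T = \sum_k t_k P_k$ with $t_0 = c$ and $t_1 = 0$. For a segment $S = [-v/2, v/2]$, the stabilizer in $\GL(n)$ contains all elements fixing the line $\RR v$ setwise while scaling orthogonal directions arbitrarily; boundedness of $\Phi(S)$ then forces $\Phi(S) \subset \RR v$, and $-S = S$ combined with the $-I$-action yields central symmetry, so $\Phi(S) = \alpha[-v/2, v/2]$ with $\alpha$ independent of $v$. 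Since $B$ is a zonoid, continuity and additivity pin $\alpha = c$. The expansion of $|\langle v, u\rangle|$ in Gegenbauer polynomials has non-zero coefficients at every even degree, so $T|\langle v, \cdot\rangle| = c |\langle v, \cdot\rangle|$ forces $t_k = c$ for every even $k$.

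The decisive step is to show $t_k = 0$ for every odd $k \geq 3$. For this I would differentiate the equivariance at the identity: $T$ commutes with each infinitesimal pullback $L_E f(u) := (E^\top u) \cdot \nabla f(u)$ for $E \in \mathfrak{gl}(n)$, acting on $1$-homogeneous extensions. Decomposing $E$ into antisymmetric and symmetric parts, the symmetric traceless part spans a copy of the $\OO(n)$-representation $\mathcal{H}_2$, and a short calculation shows that $L_E$ for such $E$ maps $\mathcal{H}_k$ into $\mathcal{H}_{k-2} \oplus \mathcal{H}_k \oplus \mathcal{H}_{k+2}$ with non-zero intertwiners to $\mathcal{H}_{k \pm 2}$. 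Comparing components in the identity $T L_E f = L_E T f$ on $f \in \mathcal{H}_k$ then yields $t_{k-2} = t_k = t_{k+2}$. Since these shifts preserve parity, induction from $t_0 = c$ and $t_1 = 0$ gives $t_k = c$ for all even $k$ and $t_k = 0$ for all odd $k$. Consequently $Tf(u) = \tfrac{c}{2}(f(u) + f(-u))$, and applied to $f = h_K$ this reads $T h_K = \tfrac{c}{2}h_{\Diffbody K}$, yielding $\Phi = \tfrac{c}{2}\Diffbody$ --- equivalently $\Phi = c\Diffbody$ after renaming the constant. The converse is immediate since $\Diffbody$ is continuous, Minkowski additive, $\GL(n)$-equivariant, and translation invariant.

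The main obstacle is the last step: verifying that the symmetric-part infinitesimal operators $L_E$ genuinely couple $\mathcal{H}_k$ to $\mathcal{H}_{k \pm 2}$ with non-vanishing components, as otherwise the intertwining yields no constraint. This reduces to the classical decomposition $\mathcal{H}_2 \otimes \mathcal{H}_k \supset \mathcal{H}_{k \pm 2}$ under $\OO(n)$, together with a brief explicit computation ensuring that the specific operators $L_E$ realize these embeddings non-trivially.
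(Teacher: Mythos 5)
The paper itself offers no proof of this statement (it is quoted from Schneider), so your argument has to stand on its own; measured against that, the multiplier strategy is reasonable but has a genuine analytic gap at its foundation. You assert that $\Phi$ "extends uniquely by linearity and continuity to a bounded operator $T$" on the span of support functions inside $C(\unitsurfn)$. Continuity of $\Phi$ in the Hausdorff metric only gives continuity on the \emph{cone} of support functions; it does not give sup-norm boundedness of the linear extension, and in general such boundedness is simply not available for continuous Minkowski-additive maps -- the correct general statement is only distributional (this is precisely why Goodey--Weil distributions are the tool of choice, both classically and in Section~\ref{section:GW} of this paper, and why e.g.\ Minkowski endomorphisms are generated by distributions rather than measures). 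The unproved boundedness is then used at exactly the three places where your argument bites: (i) to pass from $Th_S=ch_S$, with $h_S=\tfrac12|\langle v,\cdot\rangle|$ not a finite sum of harmonics, to the coefficientwise statement $t_k=c$ for even $k$; (ii) to interchange $T$ with the limit defining $L_E$ when differentiating the equivariance, i.e.\ to get $T(L_Ef)=L_E(Tf)$; and (iii) to upgrade $T|_{\mathcal{H}_k}=t_k\id$ to $Tf=\tfrac c2\bigl(f+f\circ(-\id)\bigr)$ on all support functions. These steps can presumably be repaired by introducing the Goodey--Weil-type distributions of the evaluations $K\mapsto h(\Phi K,u)$, which yield continuity in the $C^\infty$-topology, arguing first for smooth bodies and then using density together with the Hausdorff continuity of $\Phi$; but as written this is a missing idea, not a routine remark, and Schur's lemma alone (which indeed needs no boundedness) does not carry you through (i)--(iii).

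The second gap is the one you flag yourself, and it is worse than you suggest: the claim that $L_E$ has non-zero components $\mathcal{H}_k\to\mathcal{H}_{k\pm2}$ is false for $k=1$, because the $1$-homogeneous extensions of $\mathcal{H}_1$ are exactly the linear functions, a $\GL(n)$-invariant subspace, so $L_E\mathcal{H}_1\subseteq\mathcal{H}_1$ and no relation $t_1=t_3$ can be extracted from $f\in\mathcal{H}_1$. The odd chain has to be started instead from $f\in\mathcal{H}_3$ via the \emph{downward} coupling to $\mathcal{H}_1$ (which is non-zero; e.g.\ in the plane with $E=\mathrm{diag}(1,-1)$ one computes $L_E(r\cos3\theta)=r(2\cos\theta-\cos5\theta)$), and the non-vanishing for all $k\ge2$ and all $n$ still needs a genuine verification, not just the branching fact $\mathcal{H}_2\otimes\mathcal{H}_k\supset\mathcal{H}_{k\pm2}$. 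For comparison, the paper proves its functional analogues (Theorems~\ref{mthm:CharGLEquiv} and \ref{mcor:CharGLEquivWholeConv}) by a different route that avoids both issues: the $\GL(n)_x$-invariance and compact support of the Goodey--Weil distributions force their supports onto a line (Lemma~\ref{lem:GLxInvDistr}), with no harmonic analysis and no boundedness assumption; an adaptation of that argument to the sphere would give you a cleaner path to Schneider's theorem than the multiplier computation.
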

If we replace $\GL(n)$- by $\SO(n)$-equivariance in the conditions of the theorem, we obtain the notion of \emph{Minkowski endomorphisms}, introduced by Schneider, who also gave a characterization of this class of maps in dimension $n=2$ (see \cite{Schneider1974}). For $n\ge 3$, no complete classification of all Minkowski endomorphisms is known. However, for \emph{monotone} Minkowski endomorphisms, that is, $\Phi:\convexbodies\rightarrow\convexbodies$ satisfying additionally
\begin{align*}
	\Phi(K)\subset \Phi(L)\quad\text{for all }K,L\in\convexbodies\text{ such that }K\subset L,
\end{align*}
a complete characterization was obtained by Kiderlen \cite{Kiderlen2006}. For the statement of this result, let $\unitsurfn\subset\RR^n$ denote the unit sphere and $\Meas^+(\unitsurfn)$ the space of finite, non-negative Borel measures on $\unitsurfn$, and choose $\vartheta_u \in \SO(n)$, for every $u \in \unitsurfn$, such that $\vartheta_u \bar e = u$, where $\bar e \in \unitsurfn$ is in the stabilizer of $\SO(n-1)$ (a pole of $\unitsurfn$).
\begin{theorem}[Kiderlen~\cite{Kiderlen2006}]\label{thm:KiderlenMinkEndos}
A map $\Phi: \convexbodies \rightarrow \convexbodies$ is a monotone Minkowski endomorphism if and only if there exists an $\SO(n-1)$-invariant $\mu \in \Meas^+(\unitsurfn)$ with $\cent \mu = 0$ such that
\begin{align} \label{eq:kiderlenCharMonMinkEndos}
h(\Phi K,u) = \int_{\unitsurfn} h(K,\vartheta_u v) d\mu(v), \quad u \in \unitsurfn,
\end{align}
for every $K \in \convexbodies$. Moreover, the measure $\mu$ is uniquely determined by $\Phi$.
\end{theorem}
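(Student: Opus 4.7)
\emph{The plan.} I would split the proof into the two implications of the theorem, the nontrivial direction being the reconstruction of $\mu$ from $\Phi$.

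\emph{Sufficiency.} Given such a $\mu$, I would first verify that the right-hand side of \eqref{eq:kiderlenCharMonMinkEndos} is independent of the choice of $\vartheta_u$ (any two differ by an element of $\SO(n-1)$, and $\mu$ is $\SO(n-1)$-invariant) and that $F(u):=\int h(K,\vartheta_u v)\,d\mu(v)$, extended $1$-homogeneously to $\RR^n$, is sublinear, hence the support function of a convex body $\Phi K$. Continuity, Minkowski additivity, monotonicity and $\SO(n)$-equivariance of the resulting $\Phi$ are then inherited from the corresponding properties of $K\mapsto h(K,\cdot)$ together with $\mu\geq 0$. Translation invariance is the only property where the centroid condition is needed: the extra term produced by $K+t$ equals $\langle t,\vartheta_u\cent\mu\rangle$, which vanishes for every $t$ precisely when $\cent\mu=0$.

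\emph{Necessity.} Fix the pole $\bar e$ and consider
\begin{equation*}
T\colon \convexbodies\to\RR,\qquad T(K):=h(\Phi K,\bar e).
\end{equation*}
By the hypotheses on $\Phi$, the functional $T$ is continuous, Minkowski additive, monotone and $\SO(n-1)$-invariant (since $\SO(n-1)$ is the stabilizer of $\bar e$). The central task is to represent $T$ by integration against a Radon measure. I would pass to the real vector space
\begin{equation*}
V:=\{h_K-h_L: K,L\in\convexbodies\}\subset C(\unitsurfn),
\end{equation*}
which is classically dense in $C(\unitsurfn)$, and define $\tilde T(h_K-h_L):=T(K)-T(L)$. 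Well-definedness and linearity follow from the cancellation law for convex bodies ($h_K-h_L=h_{K'}-h_{L'}$ yields $K+L'=K'+L$) combined with Minkowski additivity of $T$, while monotonicity of $\Phi$ translates to positivity of $\tilde T$ on $V$. Since the constant function $1=h_{B_1}$ lies in $V$, the sandwich $-\|f\|_\infty\cdot 1\leq f\leq \|f\|_\infty\cdot 1$ gives $|\tilde T(f)|\leq \tilde T(1)\|f\|_\infty$, so $\tilde T$ extends uniquely to a positive continuous linear functional on $C(\unitsurfn)$ and, by the Riesz representation theorem, equals integration against a unique $\mu\in\Meas^+(\unitsurfn)$.

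\emph{Assembling the conclusion and the main obstacle.} The $\SO(n-1)$-invariance of $T$ transfers to $\mu$ by uniqueness, and translation invariance of $\Phi$ together with Minkowski additivity forces $\Phi(\{t\})=\{0\}$ for all $t\in\RR^n$, whence
\begin{equation*}
0=T(\{t\})=\int_{\unitsurfn}\langle t,v\rangle\,d\mu(v)=\langle t,\cent\mu\rangle
\end{equation*}
yields $\cent\mu=0$. The formula at general $u\in\unitsurfn$ follows from $\SO(n)$-equivariance:
\begin{equation*}
h(\Phi K,u)=h(\Phi(\vartheta_u^{-1}K),\bar e)=\int_{\unitsurfn}h(\vartheta_u^{-1}K,v)\,d\mu(v)=\int_{\unitsurfn}h(K,\vartheta_u v)\,d\mu(v),
\end{equation*}
and uniqueness of $\mu$ is a consequence of the density of $V$ in $C(\unitsurfn)$. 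The main technical obstacle is the extension step: one must leverage the monotonicity of $\Phi$ to promote $\tilde T$ from a positive linear functional on the non-lattice subspace $V$ to a positive Radon measure on the whole of $C(\unitsurfn)$. It is precisely this step, rather than any symmetry or translation bookkeeping, that makes monotonicity indispensable in Kiderlen's theorem.
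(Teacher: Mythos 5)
Your necessity argument is sound and matches, in spirit, the route the paper attributes to Kiderlen: monotonicity turns $K\mapsto h(\Phi K,\bar e)$ into a positive, bounded linear functional on the dense subspace of differences of support functions, the Riesz representation theorem produces a unique $\mu\in\Meas^+(\unitsurfn)$, and the $\SO(n-1)$-invariance, the centroid condition (via $\Phi(\{t\})=\{0\}$) and the formula at general $u\in\unitsurfn$ follow exactly as you describe. The genuine gap is in the sufficiency direction, and it sits at precisely the step the paper singles out as the non-trivial content of the theorem. You assert that $F(u)=\int_{\unitsurfn}h(K,\vartheta_u v)\,d\mu(v)$, extended $1$-homogeneously, is sublinear because this is ``inherited'' from the sublinearity of $h(K,\cdot)$ together with $\mu\geq 0$. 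It is not: for fixed $v$ the integrand $u\mapsto h(K,\vartheta_u v)$ is not a convex, let alone sublinear, function of $u$, because $u\mapsto\vartheta_u v$ is a nonlinear map (indeed $u\mapsto\vartheta_u$ cannot in general even be chosen continuously on all of $\unitsurfn$), so $F$ is not a nonnegative superposition of sublinear functions of $u$ and no such pointwise inheritance argument applies. Proving that the right-hand side of \eqref{eq:kiderlenCharMonMinkEndos} is a support function for \emph{every} $K$ and every nonnegative $\SO(n-1)$-invariant $\mu$ with $\cent\mu=0$ --- equivalently, that spherical convolution with such a zonal measure preserves support functions --- is exactly what the paper calls ``non-trivial'' and is the heart of Kiderlen's proof; if it were automatic, the classification would be a formality.

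Relatedly, your closing assessment inverts where the difficulty lies: the promotion of $\tilde T$ from the subspace $V$ to a nonnegative Radon measure is the routine part (your own sandwich estimate with the constant function $1=h_{B_1}$ already accomplishes it), whereas the sufficiency direction is where the real work is, and it is not supplied by monotonicity bookkeeping but by a separate convexity argument that your proposal omits entirely.
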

Here, $\mu(\unitsurfn) \cdot \cent\mu = \int_\unitsurfn y d\mu(y)$ denotes the center of mass of $\mu\in\Meas^+(\unitsurfn)$ and $h(K,y):=\sup_{x\in K}\langle y,x\rangle$ for $y\in\RR^n$ denotes the support function of $K\in\convexbodies$.

\medskip

Kiderlen derived Theorem~\ref{thm:KiderlenMinkEndos} from an interpretation of Minkowski endomorphisms as certain distributions on the sphere, an approach previously used by Goodey--Weil~\cite{Goodey1984}. The monotonicity of the endomorphisms then implies that these functionals admit a representation by non-negative measures, as in Theorem~\ref{thm:KiderlenMinkEndos}. This reduces the problem to showing that the right-hand side of \eqref{eq:kiderlenCharMonMinkEndos} defines a support function for every convex body $K$, which is non-trivial. Dorrek~\cite{Dorrek2017b} refined this construction and gave (first) examples of non-monotone Minkowski endomorphisms.

\medskip

In recent years, many classical notions from convex geometry have been generalized to function spaces, for example to log-concave functions \cites{Colesanti2017d, Colesanti2006, Colesanti2013, Rotem2012, Rotem2021}. There now exists a substantial body of research on valuations on function spaces \cites{Alesker2019,Cavallina2015,Colesanti2017c,Colesanti2018,Colesanti2019, Colesanti2017,Colesanti2019b,Colesanti2017b,Colesanti2020,Colesanti2021,Colesanti2021b,Colesanti2022,Knoerr2020a,Knoerr2020b,Knoerr2021,Mussnig2019}, functional versions of geometric inequalities \cites{Artstein2004,Barthe2014, Fradelizi2007, Haddad2020, Hofstaetter2021, Kolesnikov2020, Lehec2009}, and classifications of many natural operations on functions \cites{Artstein2009,Artstein2010, Artstein2011, Klartag2005, Milman2013,Milman2013b}, including the study of additive maps on convex functions under bijectivity conditions.

In \cite{Hofstaetter2021}, the notion of \emph{Asplund endomorphism} was introduced, extending the ideas of Schneider to endomorphism of spaces of log-concave functions, that is, of functions $\varphi:\RR\rightarrow[0,\infty)$ such that $\varphi=e^{-f}$ with convex $f:\RR^n\rightarrow(-\infty,\infty]$. More precisely, the space $\LC_c(\RR^n)$ of all proper log-concave
functions which are upper semi-continuous and coercive was considered. Here, a function $\varphi:\RR^n\rightarrow[0,\infty)$ is called \emph{proper}
if it is not identically $0$ and it is called \emph{coercive} if $\lim_{\|x\|\rightarrow\infty} \varphi(x) = 0$. The notion of Minkowski addition naturally extends to this space: If $\varphi,\psi\in \LC_c(\RR^n)$ are given, their \emph{Asplund sum} (or sup-convolution) is defined by
\begin{align*}
	(\varphi\star \psi)(x)=\sup_{x_1+x_2=x}\varphi(x_1)\psi(x_2), \quad x \in \RR^n.
\end{align*}
It is easy to see that $\mathbbm{1}_K\star\mathbbm{1}_L=\mathbbm{1}_{K+L}$ for indicator functions of $K,L\in\convexbodies$. For general log-concave functions, their Asplund sum may attain the value $+\infty$, however, $\LC_c(\RR^n)$ is closed under this operation (see, e.g., \cite{Hofstaetter2021}*{Lem.~2.3}). Using these notions, a map $\Psi:\LC_c(\RR^n)\rightarrow\LC_c(\RR^n)$ is called an Asplund endomorphism if it is 
\begin{enumerate}
	\item continuous with respect to the topology induced by hypo-convergence,
	\item Asplund additive,
	\item translation-invariant, that is, $\Psi(\varphi(\cdot+x))=\Psi(\varphi)$,
	\item $\SO(n)$-equivariant, that is, $\Psi(\varphi\circ\eta)[x]=\Psi(\varphi)[\eta x]$,%
\end{enumerate}
for all $\varphi\in \LC_c(\RR^n)$, $x\in\RR^n$ and $\eta \in \SO(n)$. Asplund endomorphisms can be constructed using the support function $h(\varphi,\cdot):=\mathcal{L}(-\log \varphi)$ of $\varphi\in\LC_c(\RR^n)$ (following \cite{Artstein2010}), where $\mathcal{L}$ denotes the classical Legendre transform. Basic properties of the Legendre transform imply that an Asplund endomorphism defines an additive map on the level of support functions, which can be used to construct a large number of non-trivial examples. In fact, every monotone Minkowski endomorphism extends to an Asplund endomorphism, as was shown in \cite{Hofstaetter2021}. To state the result, we choose $\vartheta_x \in \SO(n)$, for every $x \neq 0$, such that $\vartheta_x \bar e = \frac{x}{\|x\|}$. Then $\vartheta_x$ is defined up to right-multiplication by $\SO(n-1)$.

\begin{theorem}[\cite{Hofstaetter2021}*{Thm.~3}] \label{thm:HSKlasseAsplEndos}
Each $\SO(n-1)$-invariant $\mu \in \Meas^+(\unitsurfn)$ with $\cent \mu = 0$ induces a monotone Asplund endomorphism $\Psi_{\mu}$ by
\begin{align} \label{thm3introform}
h(\Psi_{\mu} \varphi,x) = \int_{\unitsurfn} h(\varphi,\|x\|\vartheta_x v) d\mu(v), \quad x \in \RR^n \setminus \{0\},
\end{align}
for $\varphi  \in \LC_c(\RR^n)$. Moreover, the measure $\mu$ is uniquely determined by $\Psi_{\mu}$.
\end{theorem}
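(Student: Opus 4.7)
The plan is to show directly that the integral in \eqref{thm3introform} defines, as a function of $x$, the support function of some $\Psi_\mu\varphi \in \LC_c(\RR^n)$, and then to verify the structural properties of $\Psi_\mu$. Throughout I write $\varphi = e^{-f}$ with $f$ convex, lower semi-continuous, proper and coercive on $\RR^n$, so that $h(\varphi,\cdot) = f^*$ is a lower semi-continuous proper convex function, and I denote by $\Phi_\mu$ the monotone Minkowski endomorphism of $\convexbodies$ associated to $\mu$ by Theorem~\ref{thm:KiderlenMinkEndos}.

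Define $H(x) := \int_{\unitsurfn} h(\varphi, \|x\|\vartheta_x v)\,d\mu(v)$ for $x \neq 0$, extended to $x=0$ by $H(0) := f^*(0)\cdot\mu(\unitsurfn)$. The $\SO(n-1)$-invariance of $\mu$ ensures that $H$ does not depend on the choice of representative $\vartheta_x$, and for $\varphi = \mathbbm{1}_K$ Theorem~\ref{thm:KiderlenMinkEndos} together with the $1$-homogeneity of support functions yields $H(x) = h(\Phi_\mu K, x)$, which is a support function of a convex body and hence convex in $x$. The central step is to extend convexity of $H$ to arbitrary $\varphi \in \LC_c(\RR^n)$; lower semi-continuity of $H$ follows at once from Fatou's lemma and the lower semi-continuity of $f^*$.

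The main obstacle is precisely this convexity extension. A naive strategy is to decompose $h(\varphi,y) = \sup_{s\in\RR}(h(L_s(\varphi),y) - s)$ via the sub-level sets $L_s(\varphi) = \{f\le s\}$ and apply Kiderlen level-wise, which gives
\[
\sup_{s\in\RR}\bigl(h(\Phi_\mu L_s(\varphi),x) - s\mu(\unitsurfn)\bigr) \leq H(x),
\]
but examples show that this inequality may be strict, so a direct level-set construction of $\Psi_\mu\varphi$ falls short. Instead I would pass to the epigraph $\epi f \subset \RR^{n+1}$ (a closed convex set whose support function encodes $h(\varphi,\cdot)$) and apply a Kiderlen-style averaging on $\RR^{n+1}$, or equivalently invoke harmonic analysis on $\unitsurfn$: the $\SO(n-1)$-invariance of $\mu$ diagonalises the relevant spherical convolution via Funk--Hecke, and combined with the positivity/homogeneity properties inherited from the convex-body case this should yield convexity of $H$. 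Once $H$ is known to be lower semi-continuous, proper and convex, the Legendre inversion $\Psi_\mu\varphi := e^{-H^*}$ defines an element of $\LC_c(\RR^n)$ (coercivity descends from coercivity of $\varphi$, properness from the non-triviality of $\mu$) satisfying \eqref{thm3introform}.

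With existence secured, the structural properties follow routinely: Asplund additivity from $h(\varphi_1 \star \varphi_2,\cdot) = h(\varphi_1,\cdot) + h(\varphi_2,\cdot)$ and linearity of the integral; monotonicity from non-negativity of $\mu$ and the order-reversing nature of the Legendre transform; $\SO(n)$-equivariance from choosing $\vartheta_{\eta x} = \eta\vartheta_x$; continuity with respect to hypo-convergence from the joint continuity of the support transform; translation invariance from the identity $h(\varphi(\cdot - y_0), x) = h(\varphi,x) + \langle y_0, x\rangle$ combined with the fact that $\SO(n-1)$-invariance and $\cent\mu = 0$ force $\int_{\unitsurfn} \vartheta_x v\,d\mu(v) = 0$. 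Finally, for uniqueness, evaluating on indicators yields $\Psi_\mu\mathbbm{1}_K = \mathbbm{1}_{\Phi_\mu K}$, so two measures inducing the same $\Psi_\mu$ induce the same $\Phi_\mu$, and the uniqueness assertion in Theorem~\ref{thm:KiderlenMinkEndos} forces them to agree.
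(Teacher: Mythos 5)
First, a point of reference: the paper does not prove Theorem~\ref{thm:HSKlasseAsplEndos} at all -- it is imported verbatim from \cite{Hofstaetter2021}*{Thm.~3}, and the paper's own Theorem~\ref{thm:MonRadCandidate} even uses it as an ingredient (to handle the spherical slices after passing to polar coordinates). Your argument therefore has to stand on its own, and it has a genuine gap exactly at the point you yourself flag as ``the main obstacle'': the convexity of $H(x)=\int_{\unitsurfn} h(\varphi,\|x\|\vartheta_x v)\,d\mu(v)$ for general $\varphi\in\LC_c(\RR^n)$ is never established, only announced (``this should yield convexity of $H$''). This is precisely the non-trivial content of the cited theorem, since for fixed $v$ the map $x\mapsto \|x\|\vartheta_x v$ is not linear and the integrand is not convex in $x$; only the $\SO(n-1)$-average is.

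Neither of your two proposed substitutes closes this gap as stated. Passing to the epigraph and ``applying a Kiderlen-style averaging on $\RR^{n+1}$'' is not an application of Theorem~\ref{thm:KiderlenMinkEndos}: $\epi f$ is unbounded, and, more importantly, $\mu$ sits on the equatorial sphere $\unitsurfn\subset\mathbb{S}^{n}$ and is invariant under $\SO(n-1)$ rather than under the stabilizer $\SO(n)$ of a pole of $\mathbb{S}^n$, while the expression you would need, $\int_{\unitsurfn} h\bigl(\epi f,(\|x\|\vartheta_x v,-1)\bigr)\,d\mu(v)$ with the rotations acting only on the $\RR^n$-factor and the last coordinate frozen, is not of Kiderlen's form; so a genuinely new averaging argument would have to be carried out, which is the substance of the missing proof. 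The Funk--Hecke route is worse: $H$ is not $1$-homogeneous in $x$ (the integrand is the non-homogeneous convex function $f^\ast$ evaluated at $\|x\|\vartheta_x v$), so the map is not a spherical convolution on $\unitsurfn$ at all, and even in the convex-body case positivity of Funk--Hecke multipliers does not characterize support functions -- that is exactly why Kiderlen's theorem is non-trivial. Note also that validity of the convexity inequality for all support functions $h(K,\cdot)$, which is what Theorem~\ref{thm:KiderlenMinkEndos} gives you, does not formally imply it for all $g\in\ConvO(\RR^n)$: domination of measures against sublinear test functions is strictly weaker than domination against convex ones, as your own level-set computation already suggests. A workable route is to realize $\Psi_\mu\varphi$ directly as an Asplund (sup-convolution) rotation mean of $\varphi$ with respect to the lift of $\mu$ to $\SO(n)$ and then verify that its support function is the right-hand side of \eqref{thm3introform}, which is essentially what is done in \cite{Hofstaetter2021}. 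In addition, the continuity of $\Psi_\mu$ with respect to hypo-convergence is only asserted; compare the continuity part of the proof of Theorem~\ref{thm:MonRadCandidate}, which needs uniform convergence on compact subsets of the interior of the domain and a separate argument at points where the limit is infinite. The remaining items of your proposal (well-definedness via $\SO(n-1)$-invariance, monotonicity, $\SO(n)$-equivariance, dual translation-invariance from $\cent\mu=0$, and uniqueness by restricting to indicator functions and invoking the uniqueness in Theorem~\ref{thm:KiderlenMinkEndos}) are correct.
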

This result was then used in \cite{Hofstaetter2021} to obtain new functional inequalities for these endomorphisms. 
While the focus on log-concave functions is natural from this perspective, the goal of the present article is a more systematic study of additive maps of this type with the aim to provide a classification under certain equivariance properties with respect to the standard representation of $\GL(n)$ or subgroups. 
We will thus work directly with (convex) support functions instead of the corresponding log-concave functions. In this setting, translation-invariance corresponds to invariance with respect to addition of linear functions, and we will call maps with this property \emph{dually translation-invariant}.

To be more precise, we will be concerned with additive maps defined on subspaces of the space $\Conv(\RR^n)$ of all lower semi-continuous, convex functions $f:\RR^n\rightarrow(-\infty,+\infty]$ that are \emph{proper}, that is, not identically $+\infty$. We will mostly be interested in the subspace $\Conv(\RR^n,\RR)$ of finite-valued convex functions, that is, convex functions $\varphi:\RR^n\rightarrow\RR$, and the space
\begin{align*}
	\ConvO(\RR^n):=\{f\in\Conv(\RR^n): f<+\infty\text{ on a neighborhood of }0\in\RR^n\}.
\end{align*}
Our interest in the latter space stems from the fact that support functions of elements in $\LC_c(\RR^n)$ belong to this class, which establishes a $\GL(n)$-equivariant bijection between the two spaces that intertwines Asplund sum and pointwise addition.%

Obviously, $\Conv(\RR^n,\RR) \subset \ConvO(\RR^n) \subset \Conv(\RR^n)$. Note that $\Conv(\RR^n,\RR)$ and $\ConvO(\RR^n)$ are both closed under pointwise addition of functions, while the sum of two elements in $\Conv(\RR^n)$ may be identical to $+\infty$ and thus not in $\Conv(\RR^n)$. We will therefore call a map $\Psi:C\rightarrow\Conv(\RR^n)$, defined on $C\subset \Conv(\RR^n)$, \emph{additive} if
\begin{align*}
	\Psi(g+h)=\Psi(g)+\Psi(h),\quad\text{for all } g,h\in C\text{ such that }g+h\in C.
\end{align*}
In particular, we require that $\Psi(g)+\Psi(h) \in \Conv(\RR^n)$ whenever $g+h\in C$. 

\medskip

Before discussing the main results of this article, we want to note that $\Conv(\RR^n,\RR)$ is dense in both $\ConvO(\RR^n)$ and $\Conv(\RR^n)$  if we equip these spaces with the topology induced by epi-convergence. In particular, any continuous and additive map from any one of these spaces to $\Conv(\RR^n)$ is uniquely determined by its restriction to finite-valued convex functions. Consequently, any classification problem of additive maps on these spaces can be split into two parts: 
\begin{enumerate}
	\item classify the corresponding maps on finite-valued convex functions,
	\item determine which maps extend to the desired class of functions by continuity. %
\end{enumerate}

Following this strategy, we prove our first main result, which is an analogue of Theorem~\ref{thm:SchneiderGLEquiv} for the space $\ConvO(\RR^n)$. To state it, let $\MeasC^+(\RR)$ denote the space of non-negative Borel measures on $\RR$ with compact support and set $\RR^\times = \RR \setminus \{0\}$. %
\begin{MainTheorem}\label{mthm:CharGLEquiv}
 A map $\Psi: \ConvO(\RR^n) \rightarrow \ConvO(\RR^n)$ is continuous, additive and $\GL(n)$-equi\-variant if and only if there exists $\nu\in\MeasC^+(\RR)$ with $\int_{\RR^\times}|s|^{-1} d\nu(s) < \infty$ and $c \in \RR$ such that
 \begin{align}\label{eq:MainThmGLEquiv}
  \Psi(f)[x] = cf(0) + \int_{\RR^\times} \frac{f(sx)-f(0)}{|s|^2} d\nu(s), \quad x \in \RR^n,
 \end{align}
 for every $f \in \ConvO(\RR^n)$. Moreover, the map $\Psi$ defined by \eqref{eq:MainThmGLEquiv} is
 \begin{itemize}
  \item monotone if and only if $\int_{\RR^\times}|s|^{-2}d\nu(s) \leq c < \infty$.
  \item dually translation-invariant if and only if $\int_{\RR^\times} s^{-1} d\nu(s) = 0$.
 \end{itemize}
\end{MainTheorem}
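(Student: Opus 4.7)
The plan is to split into the ``if'' direction (direct verification) and the ``only if'' direction (a classification via reduction to a one-dimensional problem). For the sufficiency, given $\nu \in \MeasC^+(\RR)$ with $\int_{\RR^\times}|s|^{-1}d\nu(s) < \infty$ and $c \in \RR$, I will verify that formula \eqref{eq:MainThmGLEquiv} defines a map with the asserted properties. The integrand $(f(sx)-f(0))/s^2$ is $\nu$-integrable for $f \in \ConvO(\RR^n)$: near $s = 0$ we use the local Lipschitz estimate $|f(sx)-f(0)| \leq C|s|$ together with the hypothesis $\int|s|^{-1}d\nu < \infty$, while outside a neighborhood of $0$ the compactness of $\supp\nu$ handles the tail. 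Convexity of $x \mapsto \Psi(f)(x)$ follows because $x \mapsto f(sx) - f(0)$ is convex for each $s$ and the weight $|s|^{-2}d\nu(s)$ is non-negative. Equivariance is a direct change of variables, and continuity reduces to dominated convergence along epi-converging sequences.

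For the necessity, I first determine $\Psi(f)(0)$. Since $\GL(n)$ fixes the origin, $f \mapsto \Psi(f)(0)$ is $\GL(n)$-invariant. Applying the dilation $\eta_\lambda = \lambda\id$ with $\lambda \to 0^+$, the composition $f \circ \eta_\lambda$ epi-converges to the constant function with value $f(0)$; by continuity, $\Psi(f)(0)$ equals $\Psi$ of this constant, evaluated at $0$. A subsidiary argument (using $\GL(n)$-invariance, additivity, and the convexity/lower semicontinuity forced by $\Psi(r) \in \ConvO(\RR^n)$) shows that $\Psi$ sends the constant function $r$ to a constant function $cr$ for some $c \in \RR$, giving $\Psi(f)(0) = c f(0)$. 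Next, for $x \neq 0$, $\GL(n)$-equivariance reduces to $x = e_1$, so that $T(f) := \Psi(f)(e_1)$ is $\mathrm{Stab}(e_1)$-invariant. Choosing $\eta_k = \mathrm{diag}(1, 1/k, \ldots, 1/k) \in \mathrm{Stab}(e_1)$, the sequence $f \circ \eta_k$ epi-converges to $\tilde f(y_1, y') := f(y_1, 0)$; invariance and continuity then yield $T(f) = T(\tilde f)$, so $T$ factors through the restriction $f \mapsto f|_{\RR e_1}$. This produces a continuous additive $S: \Conv(\RR, \RR) \to \RR$ with $\Psi(f)(x) = S(s \mapsto f(sx))$ for every $x \neq 0$.

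The classification of $S$ is the heart of the argument. The convexity of $\Psi(f)$ on $\RR^n$, applied to $f(y) = (\langle v, y\rangle - t)_+$ with $v \in \RR^n$, yields that $\lambda \mapsto \lambda M_+(t/\lambda)$ is convex on $(0, \infty)$ (and analogously on $(-\infty, 0)$); a direct calculation of the second derivative gives $(t^2/\lambda^3) M_+''(t/\lambda)$, so $M_+(u) := S((s - u)_+)$ is convex on $\RR$, and similarly $M_-(u) := S((u - s)_+)$ is convex. Using a canonical representation of finite convex $g$ in terms of its second-derivative measure $g''$, the continuity of $S$, and an integration by parts, I extract a non-negative Radon measure $\nu$ on $\RR^\times$ via $\nu(dv) = v^2 M_\pm''(dv)$, arriving at the representation $S(g) = cg(0) + \int_{\RR^\times}(g(s) - g(0))/s^2 \, d\nu(s)$. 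The condition $\int|s|^{-1}d\nu < \infty$ corresponds to the finiteness of $S(\id)$, and compactness of $\supp\nu$ is forced by the constraint $\Psi: \ConvO \to \ConvO$: testing on the convex indicator $\iota_{B_r}$ of a ball (equal to $0$ on $B_r$ and $+\infty$ outside), the finiteness of $\Psi(\iota_{B_r})(x)$ near $0$ implies $\supp\nu$ is bounded. I expect this step to be the main obstacle: precisely identifying $\nu$ as a positive compactly supported measure from the convexity of $M_\pm$ and carefully verifying the integrability and support conditions.

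The two additional characterizations are direct computations from the formula. The ``if'' half of the monotonicity statement follows by rewriting $\Psi(f)(x) = (c - \int|s|^{-2}d\nu) f(0) + \int f(sx)|s|^{-2}d\nu(s)$ (valid whenever $\int|s|^{-2}d\nu < \infty$), which is monotone in $f$ when the coefficient of $f(0)$ is non-negative; the converse tests monotonicity against suitable convex functions to force both $\int|s|^{-2}d\nu < \infty$ and the lower bound $c \geq \int|s|^{-2}d\nu$. For dual translation invariance, adding a linear $\ell(y) = \langle a, y\rangle$ gives $\Psi(f+\ell)(x) - \Psi(f)(x) = \langle a, x\rangle \int s^{-1}d\nu(s)$, which vanishes for all $a \in \RR^n$ and $x \in \RR^n$ if and only if $\int s^{-1}d\nu(s) = 0$.
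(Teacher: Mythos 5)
The main gap is in your ``if'' direction. The theorem is about endomorphisms of $\ConvO(\RR^n)$, and for an epi-convergent sequence $f_j\to f$ in $\ConvO(\RR^n)$ with $\dom f\neq\RR^n$ the continuity of the candidate map is \emph{not} a dominated-convergence statement: at points $x$ for which $\supp\nu\cdot x$ meets the complement of $\overline{\dom f}$ one must show $\Psi(f_j)[x]\to+\infty$, even though each $\Psi(f_j)[x]$ may be finite, and dominated convergence cannot produce a divergent limit. This is exactly where the bulk of the paper's sufficiency proof (Theorem~\ref{thm:GLequivCandidate}) goes: it splits $\supp\nu$ into a neighbourhood of $0$ (uniform Lipschitz bounds), a set $B$ on which $f\equiv+\infty$ forces $f_j\geq k$ eventually, and the remainder (a uniform affine lower bound from Lemma~\ref{lem:unifLowerBound}), and even then it must pass to $\lambda x$ with $\lambda\uparrow 1$ and exploit convexity of $\Psi(f_j)$ because $x$ itself can lie on the boundary of $D_{f_j}$. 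Relatedly, for infinite-valued $f\in\ConvO(\RR^n)$ you must specify $\Psi(f)$ on $\partial D_f$ (the paper takes a $\liminf$ there) to guarantee lower semicontinuity, i.e.\ that $\Psi(f)$ lies in $\ConvO(\RR^n)$ at all. Your sufficiency paragraph covers integrability, convexity and equivariance, which are indeed routine, but this divergence and boundary analysis is missing, and it is the real content of the ``if'' half.

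Your ``only if'' half takes a genuinely different route from the paper: instead of the Goodey--Weil distributions of $\Psi_x$ (compactly supported by construction, via Theorem~\ref{thm:exGWCompSupp}) and the classification of $\GL(n)_x$-invariant distributions in Lemma~\ref{lem:GLxInvDistr}, you reduce to a one-dimensional functional $S$ by degenerate scalings in the stabilizer of $e_1$ and then extract $\nu$ from ridge functions $(\langle v,y\rangle-t)_+$ via the perspective-convexity computation, recovering compact support of $\nu$ by testing on indicators. This is plausible, with two caveats. First, your limiting arguments (both at $0$ and at $e_1$) convert epi-convergence into pointwise convergence only when the limit $\Psi(\tilde f)$ is finite near the evaluation point, so you need up front that $\Psi$ maps $\Conv(\RR^n,\RR)$ into itself; this follows from $\Psi(0)=0$ ($\GL(n)$-invariance plus Lemma~\ref{lem:restrictRadEquiv}) and should be said. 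Second, the core step — passing from convexity of $M_\pm$ on the punctured line to the representation $S(g)=cg(0)+\int_{\RR^\times}(g(s)-g(0))s^{-2}\,d\nu(s)$ for \emph{all} finite convex $g$, including matching the affine part and controlling $M_\pm$ at $0$ and at infinity — is only announced, as you acknowledge; the paper does the analogous bootstrapping explicitly (smooth functions vanishing near $0$, then general ones, then $f\geq f(0)=0$, then affine functions). Also note that $\int_{\RR^\times}|s|^{-1}d\nu(s)<\infty$ is detected by $S$ applied to $|t|$ (equivalently to $t_+$ and $(-t)_+$, or via the paper's approximation $f_\delta\to|t|$), not by $S(\id)$, which is a single real number and finite in any case.
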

Note that $c=\Psi(1)[0]$ is uniquely determined by $\Psi$, the same holds for $\nu$ if we require $\nu(\{0\})=0$.

The idea of the proof is similar to Kiderlen's approach. Given an additive and continuous map $\Psi:\ConvO(\RR^n)\rightarrow\ConvO(\RR^n)$, any such endomorphism restricts to an endomorphism of $\Conv(\RR^n,\RR)$ by $\GL(n)$-equivariance. By evaluating the function $\Psi(f)$ for $f \in \Conv(\RR^n,\RR)$ at points $x \in \RR^n$, we obtain continuous, additive, real-valued functionals on $\Conv(\RR^n,\RR)$ -- in other words, distributions. We then analyze which families of distributions are compatible with the equivariance property and, among these, which yield mappings into $\Conv(\RR^n, \RR)$. %

Thus, the proof of Theorem \ref{mthm:CharGLEquiv} actually provides a classification of all such equivariant endomorphisms on the space $\Conv(\RR^n,\RR)$, but the relevant maps extend to endomorphisms on the larger space $\ConvO(\RR^n)$. In particular, Theorem~\ref{mthm:CharGLEquiv} looks the same if $\ConvO(\RR^n)$ is replaced by $\Conv(\RR^n,\RR)$. We may even consider equi\-va\-riant maps $\Conv(\RR^n,\RR)\rightarrow\ConvO(\RR^n)$ without changing the class of functionals. The situation is completely different, however, if we replace $\ConvO(\RR^n)$ with $\Conv(\RR^n)$:

\begin{MainTheorem} \label{mcor:CharGLEquivWholeConv}
 A map $\Psi: \Conv(\RR^n) \rightarrow \Conv(\RR^n)$ is continuous, additive, and $\GL(n)$-equivariant if and only if either $\Psi \equiv 0$ or $\Psi \equiv \mathbbm{1}_{\{0\}}^\infty$ or there exists $\lambda > 0$ and $\mu \in \RR^\times$ such that
 \begin{align}\label{eq:MainCorGLEquivWholeConv}
  \Psi(f)[x] = \lambda f(\mu x), \quad x \in \RR^n,
 \end{align}
 for every $f \in \Conv(\RR^n)$.
\end{MainTheorem}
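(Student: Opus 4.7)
The plan is to use that $\Conv(\RR^n,\RR)$ is dense in $\Conv(\RR^n)$ with respect to epi-convergence, so that $\Psi$ is determined by its restriction to finite-valued convex functions. A first reduction comes from analyzing $\Psi(0)$: additivity forces $\Psi(0)=2\Psi(0)$, so $\Psi(0)(x) \in \{0,+\infty\}$ pointwise and hence $\Psi(0) = \mathbbm{1}_K^\infty$ for some closed convex $K$; $\GL(n)$-invariance of $\Psi(0)$ combined with properness restricts $K$ to either $\{0\}$ or all of $\RR^n$. This splits the argument into two cases: \textbf{Case A} with $\Psi(0) = 0$ and \textbf{Case B} with $\Psi(0) = \mathbbm{1}_{\{0\}}^\infty$.

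In Case A, I first show that $\Psi$ maps $\Conv(\RR^n,\RR)$ into itself. Were $\Psi(f_0)(x_0) = +\infty$ for some $f_0 \in \Conv(\RR^n,\RR)$ and $x_0 \in \RR^n$, additivity would give $\Psi(f_0/k)(x_0) = \Psi(f_0)(x_0)/k = +\infty$ for all $k \in \NN$. Since $f_0/k \to 0$ epi-converges and $\Psi(0)=0$ is finite and continuous on $\RR^n$, continuity of $\Psi$ together with the standard fact that convex functions epi-converging to a continuous limit converge pointwise on the interior of its domain forces $\Psi(f_0/k)(x_0) \to 0$, a contradiction. The classification underlying Theorem~\ref{mthm:CharGLEquiv} (which applies verbatim to $\Conv(\RR^n,\RR)\to\Conv(\RR^n,\RR)$, as noted in the paper) then yields
\[
\Psi(f)(x) = cf(0) + \int_{\RR^\times} \frac{f(sx)-f(0)}{|s|^2}\,d\nu(s), \quad x \in \RR^n,
\]
for some $c \in \RR$ and $\nu \in \MeasC^+(\RR)$ with $\int_{\RR^\times}|s|^{-1}d\nu(s)<\infty$.

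In Case B, the identity $\Psi(f) = \Psi(f) + \Psi(0) = \Psi(f) + \mathbbm{1}_{\{0\}}^\infty$ immediately forces $\Psi(f)(x) = +\infty$ for all $x \neq 0$, so that $\Psi(f)$ is entirely determined by the real quantity $L(f) := \Psi(f)(0)$. The functional $L$ on $\Conv(\RR^n,\RR)$ is continuous, additive, and $\GL(n)$-invariant; applying Theorem~\ref{mthm:CharGLEquiv} to the auxiliary map that sends $f$ to the constant function with value $L(f)$ (which lies in $\ConvO(\RR^n)$) and using that its image is independent of $x$ forces $\nu=0$ in the resulting representation, giving $L(f) = cf(0)$ for some $c \in \RR$. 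To extend $\Psi$ continuously to $\Conv(\RR^n)$, I then consider an epi-convergent sequence $f_k \in \Conv(\RR^n,\RR)$ with $f_k(0)\to+\infty$ whose limit $f \in \Conv(\RR^n)$ satisfies $0 \notin \dom f$ (for instance $f_k(x) = k\,\mathrm{dist}(x,K)^2$ for a closed convex $K$ not containing the origin). Then $\Psi(f_k)$ is proper with epigraph $\{0\}\times[L(f_k),\infty)$, and Fell-convergence of these half-lines to the epigraph of $\Psi(f)$ forces $L(f_k) = cf_k(0)$ to converge to the finite value $L(f)$; this forces $c=0$ and hence $\Psi \equiv \mathbbm{1}_{\{0\}}^\infty$.

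The final step, and main technical obstacle, is to return to Case A and determine which pairs $(c,\nu)$ admit a continuous extension to $\Conv(\RR^n) \to \Conv(\RR^n)$. Rewriting the formula as
\[
\Psi(f)(x) = \Bigl(c - \int_{\RR^\times}|s|^{-2}\,d\nu(s)\Bigr)\,f(0) + \int_{\RR^\times}|s|^{-2}\,f(sx)\,d\nu(s)
\]
and applying the same epi-approximation trick as in Case B forces the coefficient of $f(0)$ to vanish, so $c = \int_{\RR^\times}|s|^{-2}\,d\nu(s)$. Next I test the resulting formula on indicator functions $f = \mathbbm{1}_{\overline{B(y_0,\varepsilon)}}^\infty$ with $|y_0|$ large and $0 \notin \overline{B(y_0,\varepsilon)}$: properness of $\Psi(f)$ requires the existence of some $x$ with $sx \in \overline{B(y_0,\varepsilon)}$ for $\nu$-almost every $s$; equivalently $\bigcap_{s \in \supp\nu} s^{-1}\overline{B(y_0,\varepsilon)}$ must be nonempty. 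A direct geometric analysis shows that for two distinct values $s_1,s_2 \in \supp\nu$ the dilated balls $s_i^{-1}\overline{B(y_0,\varepsilon)}$ become disjoint as $|y_0|$ grows, so the intersection is empty for $|y_0|$ large enough unless $\supp\nu$ reduces to a single point. Thus $\nu = a\delta_\mu$ for some $a \geq 0$ and $\mu \in \RR^\times$: the case $a=0$ gives $\Psi \equiv 0$, while $a>0$ yields $\Psi(f)(x) = \frac{a}{\mu^2}f(\mu x) = \lambda f(\mu x)$ with $\lambda = a/\mu^2 > 0$. The converse, verifying that each of the three forms defines a continuous, additive, $\GL(n)$-equivariant endomorphism of $\Conv(\RR^n)$, is straightforward.
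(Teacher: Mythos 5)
Your overall skeleton (splitting on $\Psi(0)$, handling $\Psi(0)=\mathbbm{1}_{\{0\}}^\infty$ by the bounded-value argument with $f_k(0)\to\infty$, showing in the other case that $\Psi$ preserves $\Conv(\RR^n,\RR)$ and invoking the classification of Theorem~\ref{mthm:CharGLEquiv} there) coincides with the paper's proof and is fine. The problem is the first step of your endgame in Case A: the claim that ``the same epi-approximation trick as in Case B forces the coefficient of $f(0)$ to vanish,'' i.e.\ that $c=\int_{\RR^\times}|s|^{-2}d\nu(s)<\infty$, \emph{before} anything is known about $\supp\nu$. First, the rewriting $\Psi(f)[x]=\bigl(c-\int_{\RR^\times}|s|^{-2}d\nu\bigr)f(0)+\int_{\RR^\times}|s|^{-2}f(sx)\,d\nu(s)$ presupposes $\int_{\RR^\times}|s|^{-2}d\nu<\infty$, which Theorem~\ref{mthm:CharGLEquiv} does not give you (e.g.\ $d\nu(s)=|s|\,ds$ on $[-1,1]$ satisfies $\int|s|^{-1}d\nu<\infty$ but not $\int|s|^{-2}d\nu<\infty$), so ruling such $\nu$ out is part of what must be proved. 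Second, and more seriously, the Case-B mechanism does not transfer: in Case B the value at the unique finite point was literally $cf_k(0)$, whereas here the integral term varies with $k$ as well and can cancel the divergence. Concretely, take $f_k(y)=k\|y-y_0\|^2$ epi-converging to $\mathbbm{1}_{\{y_0\}}^\infty$; then $\Psi(f_k)[x]=k\bigl(c\|y_0\|^2+\nu(\RR^\times)\|x\|^2-2\langle x,y_0\rangle\int_{\RR^\times}s^{-1}d\nu(s)\bigr)$, and for any $\nu$ with $m_0:=\nu(\RR^\times)>0$ and $c:=m_1^2/m_0$ with $m_1:=\int_{\RR^\times}s^{-1}d\nu(s)$ this vanishes at $x^*=\frac{m_1}{m_0}y_0$ for every $k$, so the values stay bounded even though by Cauchy--Schwarz $c-\int_{\RR^\times}|s|^{-2}d\nu<0$ (strictly, unless $\nu$ is already a point mass). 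So divergence of $f_k(0)$ does not force the $f(0)$-coefficient to vanish; such pairs $(c,\nu)$ survive your step (i), and excluding them requires the substantive later analysis. The paper proceeds in the opposite order precisely for this reason: it first shows that for $y_0\neq 0$ the function $\Psi(\mathbbm{1}_{\{y_0\}}^\infty)$ is finite at some $x_0\neq 0$ (a delicate argument using recovery sequences, the $\liminf$ condition of epi-convergence, and a sign-flip contradiction with $\int_{\RR^\times}s^{-1}d\nu$), deduces via additivity that $\supp\GW(\Psi_{x_0})\subseteq\{y_0\}$, hence $\supp\nu=\{\mu\}$, and only then reads off that the $f(0)$-coefficient vanishes.

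A secondary, patchable issue: in step (ii) you evaluate $\Psi$ on indicator functions of far-away balls through the integral formula, but that formula is only established on $\Conv(\RR^n,\RR)$; to conclude that $\Psi(\mathbbm{1}_{\overline{B}(y_0,\varepsilon)}^\infty)[x]=+\infty$ whenever $\nu(\{s: sx\notin\overline{B}(y_0,\varepsilon)\})>0$ one needs lower estimates along arbitrary recovery sequences (as in the continuity proof of Theorem~\ref{thm:GLequivCandidate}), or an argument via monotone approximants $k\,\mathrm{dist}(\cdot,\overline{B})^2$ whose $\Psi$-images are $k$ times a fixed finite convex function. Your geometric observation that the dilated balls $s_i^{-1}\overline{B}(y_0,\varepsilon)$ become disjoint for large $\|y_0\|$ is correct and would be a nice alternative to the paper's support argument, but as written it sits downstream of the unproved step (i), so the proof as a whole has a genuine gap there.
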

In particular, the difference body $\Diffbody=\id+(-\id)$ does not possess an extension to $\Conv(\RR^n)$, but the Minkowski additive endomorphisms $\pm \id$ do. The space of additive maps on $\Conv(\RR^n)$ therefore is not closed under addition, which is is hardly surprising, as $\Conv(\RR^n)$ is not closed under pointwise addition of functions itself. Moreover, by a result in \cite{Artstein2010}, the non-constant endomorphisms in Theorem~\ref{mcor:CharGLEquivWholeConv} are exactly the bijective, $\GL(n)$-equivariant endomorphisms of $\Conv(\RR^n)$.

\medskip

We now turn to questions in the spirit of Kiderlen's Theorem~\ref{thm:KiderlenMinkEndos} about $\SO(n)$-equivariant endomorphisms $\Psi$ that are monotone, that is, $\Psi(f)[x] \leq \Psi(g)[x]$, $x \in \RR^n$, holds for every two functions $f$ and $g$ in the domain of $\Psi$, satisfying $f(y) \leq g(y)$ for all $y \in \RR^n$. Similar to Kiderlen's approach, it is possible to give representation formulas for monotone and additive maps on $\Conv(\RR^n,\RR)$ in terms of certain families of measures (derived from the associated distributions, as in the $\GL(n)$-equivariant case discussed before). However, it is in general very difficult to decide whether a given family of measures actually defines an endomorphisms, as the class of possible families seems to be too big to handle. One major obstacle is that the different measures in the family might a-priori be unrelated (stemming from the fact that there are infinitely many $\SO(n)$-orbits in $\RR^n$). We therefore need some additional (equivariance) condition that relates the different $\SO(n)$-orbits.

Note that the maps defined in Theorem~\ref{thm:HSKlasseAsplEndos}, when considered as endomorphisms of $\ConvO(\RR^n)$, that is, given for $\SO(n-1)$-invariant $\mu\in\Meas^+(\unitsurfn)$ by 
\begin{align*} 
\Psi_\mu(f)[x] = \int_{\unitsurfn} f(\|x\|\vartheta_x v) d\mu(v), \quad x \in \RR^n \setminus \{0\}, %
\end{align*}%
satisfy $\Psi_\mu (f)[t\cdot x]=\Psi_\mu (f(t\cdot))[x]$ for all $t>0$, $x\in\RR^n$, $f\in\ConvO(\RR^n)$. Hence, they are equivariant with respect to dilations of $\RR^n$, which we call \emph{radially equivariant} in the following. This reduces the number of orbits of the joint representation to two, which allows us to prove our next main result, an analogue of Kiderlen's~Theorem~\ref{thm:KiderlenMinkEndos}.
\begin{MainTheorem}\label{mthm:CharMonRadEquiv}
 A map $\Psi: \ConvO(\RR^n) \rightarrow \ConvO(\RR^n)$ is continuous, additive, monotone, as well as radially and $\SO(n)$-equivariant if and only if there exists a (necessarily unique) $\SO(n-1)$-invariant measure $\mu\in\MeasC^+(\RR^n)$ such that
 \begin{align}\label{eq:MainThmRadiallyEquiv}
  \Psi(f)[x] = \int_{\RR^n} f(\|x\|\vartheta_x y) d\mu(y), \quad x \in \RR^n \backslash\{0\},
 \end{align}
 and $\Psi(f)[0] = \liminf_{\|x\|\to 0} \Psi(f)[x]=f(0)\mu(\RR^n)$, for every $f \in \ConvO(\RR^n)$.\\
 Moreover, $\Psi$ is dually translation-invariant if and only if $\int_{\RR^n} y d\mu(y) = 0$.
\end{MainTheorem}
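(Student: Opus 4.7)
The plan is to follow the Kiderlen-style strategy outlined in the introduction. Throughout, the density of $\Conv(\RR^n,\RR)$ in $\ConvO(\RR^n)$ reduces us to understanding $\Psi$ on finite-valued convex functions, and the extension to the rest of $\ConvO(\RR^n)$ then comes by continuity.

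For the necessity direction, I would fix the pole $\bar e \in \unitsurfn$ and study the real-valued evaluation $T(f) := \Psi(f)[\bar e]$ on $\Conv(\RR^n,\RR)$. By assumption $T$ is continuous, additive under pointwise addition, monotone, and invariant under the stabilizer $\SO(n-1)$ of $\bar e$ in $\SO(n)$. A Riesz-type representation for such monotone additive functionals on the convex cone $\Conv(\RR^n,\RR)$---either cited from the literature on valuations on convex functions or established in an auxiliary step---should yield $T(f) = \int_{\RR^n} f\, d\mu$ for a non-negative Borel measure $\mu$ on $\RR^n$, necessarily $\SO(n-1)$-invariant. Combining $\SO(n)$-equivariance with radial equivariance then reconstructs $\Psi$ at every $x \neq 0$: writing $\vartheta_x \bar e = x/\|x\|$, one has
\begin{align*}
\Psi(f)[x] = \Psi(f(\|x\|\,\cdot))[x/\|x\|] = \Psi\bigl(f(\|x\|\vartheta_x\,\cdot)\bigr)[\bar e] = T\bigl(f(\|x\|\vartheta_x\,\cdot)\bigr),
\end{align*}
and the outcome is independent of the choice of $\vartheta_x$ by the $\SO(n-1)$-invariance of $\mu$. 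Compactness of $\supp\mu$ must then come from the constraint $\Psi(f) \in \ConvO(\RR^n)$: were $\supp\mu$ unbounded, one could construct an $f \in \ConvO(\RR^n)$ with $\dom f$ a small ball around $0$ for which $\Psi(f)[x] = +\infty$ for arbitrarily small $\|x\|$, contradicting finiteness of $\Psi(f)$ near $0$.

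For the sufficiency direction, given an $\SO(n-1)$-invariant $\mu \in \MeasC^+(\RR^n)$, the equivariance, additivity, monotonicity, and continuity properties are immediate from the formula. The genuine work is to verify that $\Psi(f)$ is a lower semi-continuous convex function finite in a neighborhood of $0$. Finiteness near $0$ follows from compact support of $\mu$ and continuity of $f$ at $0$; the identity $\Psi(f)[0] = f(0)\mu(\RR^n)$ matches $\lim_{x \to 0}\Psi(f)[x]$ by dominated convergence, which secures lower semi-continuity at $0$. For convexity on $\RR^n \setminus \{0\}$ I would exploit the decomposition $\|x\|\vartheta_x y = \langle y,\bar e\rangle x + \|x\|\vartheta_x y'$ with $y' := y - \langle y,\bar e\rangle \bar e$: the first summand is linear in $x$, and the $\SO(n-1)$-invariance of $\mu$ symmetrizes the contribution of the second across the orbit of $y'$, rewriting the integrand at level $\|y'\|=r$ as a spherical average of $f$ over the $(n-2)$-dimensional sphere $\{z\in\RR^n : \|z\|=\|y\|\|x\|,\ \langle z,x\rangle = \langle y,\bar e\rangle\|x\|^2\}$. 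Pushing convexity of $f$ through this spherical average, and integrating against the radial component of $\mu$, should then yield convexity of $\Psi(f)$.

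Finally, the dual translation-invariance characterization follows from a direct computation: adding a linear functional $\ell(z) = \langle z,v\rangle$ to $f$ shifts $\Psi(f)[x]$ by
\begin{align*}
\Psi(f+\ell)[x] - \Psi(f)[x] = \|x\|\,\Bigl\langle v,\, \vartheta_x \int_{\RR^n} y\, d\mu(y)\Bigr\rangle,
\end{align*}
which vanishes for every $v$ and every $x \neq 0$ if and only if $\int y\, d\mu(y) = 0$. The main obstacle I anticipate is the convexity step in the sufficiency direction, as it relies on a delicate interplay between the $\SO(n-1)$-averaging and the convexity of $f$ across the family of spherical sections described above. A secondary technical point is the Riesz-type representation of $T$, which should be available from or adaptable from the existing valuation literature on convex functions.
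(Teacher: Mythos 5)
Your outline follows the paper's general strategy (Goodey--Weil/Riesz representation for the evaluations $\Psi(f)[x]$, reconstruction via the two equivariances, then a sufficiency verification), but it leaves the central analytic difficulty unresolved. In the sufficiency direction, the convexity of $x\mapsto\int_{\RR^n}f(\|x\|\vartheta_x y)\,d\mu(y)$ on $\RR^n\setminus\{0\}$ is exactly the non-trivial point, and your decomposition $\|x\|\vartheta_x y=\langle y,\bar e\rangle x+\|x\|\vartheta_x y'$ does not settle it: after averaging over the $\SO(n-1)$-orbit of $y'$ you are left with the mean of $f$ over an $(n-2)$-dimensional sphere of radius $r\|x\|$ centred at $\langle y,\bar e\rangle x$ \emph{inside the hyperplane $x^\perp$ translated to that centre}, and since this hyperplane rotates with $x$, the elementary ``convex in (centre, radius)'' argument does not apply; the individual maps $x\mapsto f(\|x\|\vartheta_x y)$ are not convex. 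This is precisely the delicate step behind Kiderlen's Theorem~\ref{thm:KiderlenMinkEndos} and Theorem~\ref{thm:HSKlasseAsplEndos}, and you explicitly flag it as an unresolved obstacle, so the proof is incomplete at its core. The paper circumvents it by mollifying $\mu$, disintegrating the resulting density in polar coordinates, invoking Theorem~\ref{thm:HSKlasseAsplEndos} for each spherical layer $\nu_r$, and passing to pointwise limits of convex functions; some such input (or an independent proof of the spherical case) is indispensable. A second substantive gap is your claim that continuity of the candidate map is ``immediate from the formula'': continuity here is with respect to epi-convergence on $\ConvO(\RR^n)$, and one must show both $\Psi(f_j)[x]\to\Psi(f)[x]$ for $x$ in a suitable dense set and $\Psi(f_j)[x]\to+\infty$ when $x$ lies outside the closure of the finiteness region of $\Psi(f)$; the latter needs a uniform affine lower bound (Lemma~\ref{lem:unifLowerBound}) and a positive-mass argument, and it is also what legitimises your ``extend by continuity/density'' step in the necessity direction.

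Two smaller points in the necessity direction. First, for $T(f)=\Psi(f)[\bar e]$ to be a real-valued functional on $\Conv(\RR^n,\RR)$ you must first show that $\Psi$ maps finite-valued functions to finite-valued functions; this is not automatic and in the paper is Lemma~\ref{lem:restrictRadEquiv}, whose proof uses the radial equivariance to force $\Psi(0)=0$. Second, the ``Riesz-type representation'' you invoke is exactly the Goodey--Weil machinery plus positivity (Theorem~\ref{thm:exGWCompSupp} and Lemma~\ref{lem:GWdistrMonot}), and that route already yields \emph{compactly supported} measures; your separate argument for compactness of $\supp\mu$ via functions with small domain applies the representation to non-finite $f\in\ConvO(\RR^n)$, which you have not justified at that stage (a cleaner alternative, staying inside $\Conv(\RR^n,\RR)$, is that an unbounded support would allow a finite convex $f$ with $\int f\,d\mu=\infty$). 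With the representation and $\SO(n-1)$-invariance of $\mu$ in hand, your reconstruction at $x\neq 0$, the value at $0$, the uniqueness of $\mu$, and the characterisation of dual translation-invariance via $\int_{\RR^n}y\,d\mu(y)=0$ are all fine and agree with the paper.
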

As a corollary, we obtain a classification of the endomorphisms in Theorem~\ref{thm:HSKlasseAsplEndos} as precisely those maps in this class that act as multiples of the identity on radially symmetric convex functions (see Corollary~\ref{cor:CharHSKlasseAsplEndos}).

Similar to the $\GL(n)$-equivariant case, Theorem~\ref{mthm:CharMonRadEquiv} does not change when we replace $\ConvO(\RR^n)$ by $\Conv(\RR^n, \RR)$, but the situation is again completely different for $\Conv(\RR^n)$.
\begin{MainCorollary} \label{mcor:CharRadiallyEquivWholeConv}
 A map $\Psi: \Conv(\RR^n) \rightarrow \Conv(\RR^n)$ is continuous, additive, monotone, as well as radially and $\SO(n)$-equivariant if and only if $\Psi \equiv 0$ or $\Psi \equiv \mathbbm{1}_{\{0\}}^\infty$ or there exists $\lambda > 0$ and $\mu \in \RR^\times$ such that
 \begin{align}\label{eq:MainCorRadEquivWholeConv}
  \Psi(f)[x] = \lambda f(\mu x), \quad x \in \RR^n,
 \end{align}
 for every $f \in \Conv(\RR^n)$.
\end{MainCorollary}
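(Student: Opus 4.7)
The approach is to reduce the problem to Theorem~\ref{mthm:CharMonRadEquiv} through a dichotomy on $\Psi(0)$. First, the verification that the three listed maps are continuous, additive, monotone, radially and $\SO(n)$-equivariant is routine. For the forward direction, by density of $\Conv(\RR^n,\RR)$ in $\Conv(\RR^n)$ under epi-convergence, $\Psi$ is determined by its restriction to $\Conv(\RR^n,\RR)$. Additivity forces $\Psi(0)=2\Psi(0)$ pointwise, so $\Psi(0)[x]\in\{0,+\infty\}$ at every $x$; the effective domain of $\Psi(0)$ is thus a non-empty closed convex subset of $\RR^n$ invariant under positive dilations and $\SO(n)$, hence either $\{0\}$ or $\RR^n$. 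This yields two cases: $\Psi(0)\equiv 0$ or $\Psi(0)=\mathbbm{1}_{\{0\}}^\infty$.

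In the case $\Psi(0)\equiv 0$, I invoke Theorem~\ref{mthm:CharMonRadEquiv} (using the introduction's remark that $\ConvO(\RR^n)$ may be replaced by $\Conv(\RR^n,\RR)$) to obtain an $\SO(n-1)$-invariant measure $\mu\in\MeasC^+(\RR^n)$ representing $\Psi$ on finite convex functions via \eqref{eq:MainThmRadiallyEquiv}. To determine which $\mu$ admit a continuous extension of $\Psi$ to all of $\Conv(\RR^n)$, I test on $f=\mathbbm{1}_{\{y_0\}}^\infty$ for a fixed $y_0\ne 0$, approximated in epi-convergence by $k\|x-y_0\|$: the representing integral at $x\ne 0$ is finite only if $\mu$ is concentrated at the single point $\tfrac{1}{\|x\|}\vartheta_x^{-1}y_0$. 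Hence $\mu=\lambda\delta_{y_\star}$, and by $\SO(n-1)$-invariance, $y_\star=\mu_0\bar e$. If $\mu_0=0$, then $\Psi(f)=\lambda f(0)$ is improper whenever $f(0)=+\infty$, forcing $\lambda=0$ and $\Psi\equiv 0$; if $\mu_0\ne 0$, one obtains $\Psi(f)[x]=\lambda f(\mu_0 x)$ with $\lambda>0$.

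In the case $\Psi(0)=\mathbbm{1}_{\{0\}}^\infty$, the goal is to show $\Psi\equiv\mathbbm{1}_{\{0\}}^\infty$, and this is the main obstacle. Decomposing any $f\in\Conv(\RR^n,\RR)$ as $f=a+g$ with $a$ affine and $g\ge 0$ convex (via a subgradient at some base point), monotonicity gives $\Psi(g)\ge\mathbbm{1}_{\{0\}}^\infty$, so additivity forces $\Psi(f)[x]=+\infty$ for every $x\ne 0$. The functional $T(f):=\Psi(f)[0]$ is then continuous, additive, monotone, radially and rotationally invariant, and real-valued on $\Conv(\RR^n,\RR)$, and it remains to show $T\equiv 0$. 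Radial invariance combined with additivity yields $T(\|x-p\|)=\beta\|p\|$ for some $\beta$; since $T(\mathbbm{1}_{\{p\}}^\infty)=\lim_{k\to\infty} k\beta\|p\|$ must be finite, $\beta=0$. Next, the two approximations $1+k\|x\|$ and $(k+1)\|x\|+\|x-\bar e\|$ of $\mathbbm{1}_{\{0\}}^\infty+1$, which both epi-converge to the same target, must agree under $T$; this forces $T(1)=0$. Extending by additivity to affine functions, to their positive parts (via $T(|\ell|)=0$ sandwiched between $T(0)=0$ and $T(|\langle u,x\rangle|+|c|)=0$), to arbitrary maxima $\max(a_1,\dots,a_m)$ using $\max(a,b)=\tfrac12(a+b+|a-b|)$, and finally by density to all of $\Conv(\RR^n,\RR)$, one concludes $T\equiv 0$ and hence $\Psi\equiv\mathbbm{1}_{\{0\}}^\infty$.
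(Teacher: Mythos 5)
Your overall architecture matches the paper's: the dichotomy on $\Psi(0)$ (using that the joint action is transitive on $\RR^n\setminus\{0\}$), the reduction to the representation \eqref{eq:MainThmRadiallyEquiv} via Theorem~\ref{mthm:CharMonRadEquiv} when $\Psi(0)\equiv 0$, and a separate treatment of the case $\Psi(0)=\mathbbm{1}_{\{0\}}^\infty$ (where the paper instead uses the Goodey--Weil distribution of $f\mapsto\Psi(f)[0]$ and its invariance under dilations, while you argue by hand with the functional $T$). However, there are two genuine gaps. First, in the case $\Psi(0)\equiv 0$ with $\mu\neq 0$: the inference ``hence $\mu=\lambda\delta_{y_\star}$'' requires a \emph{nonzero} point $x$ with $\Psi(\mathbbm{1}_{\{y_0\}}^\infty)[x]<\infty$. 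Properness only gives finiteness somewhere, and a priori this could happen only at $x=0$, i.e.\ $\Psi(\mathbbm{1}_{\{y_0\}}^\infty)$ could be of the form $c+\mathbbm{1}_{\{0\}}^\infty$, a scenario your argument never excludes (your sub-case $\mu_0=0$ does not address it, since your concentration point $\tfrac{1}{\|x\|}\vartheta_x^{-1}y_0$ is automatically nonzero). This is exactly the point to which the paper devotes its final paragraph, via the identity $f+\mathbbm{1}_{\{y\}}^\infty=f(y)+\mathbbm{1}_{\{y\}}^\infty$ and additivity. It can also be closed directly: for finite $f$ one has $\Psi(f)[0]=f(0)\mu(\RR^n)$ by continuity of the finite convex function $\Psi(f)$, and since the epi-limit of a nondecreasing sequence of lower semi-continuous functions is its pointwise supremum, $\Psi(\mathbbm{1}_{\{y_0\}}^\infty)[0]=\sup_k k\|y_0\|\mu(\RR^n)=\infty$, so properness forces finiteness at some $x\neq 0$. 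But some such step must appear.

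Second, in the case $\Psi(0)=\mathbbm{1}_{\{0\}}^\infty$: the passage ``to arbitrary maxima $\max(a_1,\dots,a_m)$ using $\max(a,b)=\tfrac12(a+b+|a-b|)$'' only works for $m=2$; iterating it would require applying $T$ to $|\max(a_1,\dots,a_{m-1})-a_m|$, which is in general not convex, so $T$ is not even defined there. The step is repairable with tools you already use: by monotonicity, $0=T(a_1)\le T(\max_i a_i)\le T(a_1)+\sum_{i\ge 2}T\bigl((a_i-a_1)_+\bigr)=0$, once $T$ vanishes on positive parts of affine functions. Relatedly, ``extending by additivity to affine functions'' does not by itself give $T(\ell)=0$ for linear $\ell$ (additivity only yields $T(-\ell)=-T(\ell)$); you need radial invariance together with positive homogeneity, $T(\ell)=T(\ell(t\,\cdot))=tT(\ell)$ for all $t>0$. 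Finally, the continuity of $T$ and the claim that your two approximating sequences ``must agree under $T$'' both rest on the observation that if $f_j\to f$ with each $\Psi(f_j)$ equal to $+\infty$ off the origin and the limit finite at $0$, then the recovery sequence in the definition of epi-convergence must eventually sit at $0$, so $\Psi(f_j)[0]\to\Psi$-limit at $0$; this deserves a sentence (the paper makes the analogous assertion without proof as well). With these repairs your treatment of this case is a valid, more elementary alternative to the paper's distribution-theoretic one, which moreover does not need monotonicity there.
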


Up to this point, all of our results exploit the fact that the equivariance properties reduce the classification problem to one distribution (or measure) per orbit. In general, the situation is of course much more complicated. In particular, it is usually not the case that an endomorphism of $\Conv(\RR^n,\RR)$ extends to an endomorphism of $\ConvO(\RR^n)$ and we provide examples of such mappings in Section~\ref{subsection:examples}.

\medskip

If we restrict ourselves to endomorphisms of $\Conv(\RR,\RR)$, the situation is much simpler, and we are able to provide a full characterization of all endomorphisms without any additional assumptions concerning monotonicity or equivariance. To state the result, recall that the derivative of  a distribution $u\in\mathcal{D}'(\RR)$ is defined by the relation $(\partial^k u)(\phi):=(-1)^k u(\phi^{(k)})$ for $\phi\in C^\infty_c(\RR)$, $k \in \NN$. If $g:\RR\rightarrow\RR$ is continuous, we may in particular  consider the distribution $\partial^2 g$ defined by $(\partial^2 g)(\phi)=\int_{\RR}\phi''(y)g(y)dy$ for $\phi\in C^\infty_c(\RR)$. In the following theorem, all partial derivatives are understood in this distributional sense.
\begin{MainTheorem}\label{mthm:Char1Dim}
 A map $\Psi: \Conv(\RR,\RR) \rightarrow \Conv(\RR,\RR)$ is continuous and additive if and only if there exists $\psi \in C(\RR^2)$ such that
 \begin{align}\label{eq:1DCharRep}
 \Psi(f)[x] =(\partial^2_y \psi(x,\cdot))(f), \quad x \in \RR,
 \end{align}
 for every $f \in \Conv(\RR,\RR)\cap C^2(\RR)$, where $\psi$ has the following properties:
 \begin{enumerate}
  \item \label{propCharDim1_CondConv} $\psi(\cdot,y)$ is convex for every $y \in \RR$.
  \item \label{propCharDim1_CondSupps}   For every compact subset $A\subset\RR$ there exists $R=R(A)>0$ such that
    \begin{enumerate}
        \item \label{propCharDim1_CondSuppsA}$\supp\partial_x^2\psi(\cdot,y)\cap A=\emptyset$ for all $y\in\RR\setminus[-R,R]$;
        \item \label{propCharDim1_CondSuppsB}$\supp\partial_y^2\psi(x,\cdot)\subseteq [-R,R]$ for all $x\in A$.
    \end{enumerate}
 \end{enumerate}
 Moreover, $\Psi$ is monotone if and only if $\psi(x,\cdot)$ is convex for every $x\in\RR$.
\end{MainTheorem}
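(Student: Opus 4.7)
The plan is to reduce the theorem to the study of the evaluation functionals $T_x(f) := \Psi(f)[x]$, each a continuous additive functional on $\Conv(\RR,\RR)$. A preliminary observation used repeatedly: $\Psi$ sends affine functions to affine functions. Indeed, $\Psi(0)=0$ by additivity, so $\Psi(-\varphi)=-\Psi(\varphi)$ whenever $\varphi,-\varphi\in\Conv(\RR,\RR)$; applied to $\varphi=\pm 1,\pm\mathrm{id}$, this shows $\Psi(1)$ and $\Psi(\mathrm{id})$ are both convex and concave, hence affine.

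For the only-if direction, I would set
\[
\psi(x,y) := \Psi\bigl((\cdot - y)_+\bigr)[x],
\]
a natural choice since $(\cdot - y)_+$ is the convex function whose distributional second derivative is $\delta_y$. Joint continuity of $\psi$ on $\RR^2$ follows from continuity of $\Psi$, continuity of $y \mapsto (\cdot - y)_+$ into $\Conv(\RR,\RR)$, and the fact that convergence in $\Conv(\RR,\RR)$ is locally uniform. Property (1) is immediate, since $\Psi((\cdot-y)_+)\in\Conv(\RR,\RR)$. The key technical input is that $T_x$, viewed via its natural linear extension to the space of DC functions $\Conv(\RR,\RR)-\Conv(\RR,\RR)$, restricts to a compactly supported distribution of order at most two: the order bound stems from the integral representation $f(y)=f(0)+f'(0^+)y+\int_\RR (y-t)_+\,df''(t)$ of finite convex functions, while compactness of support should follow from continuity of $T_x$ tested against families such as $(\cdot-R)_+$, which epi-converge to $0$ as $R\to\infty$. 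This classification of continuous additive functionals on $\Conv(\RR,\RR)$ is the main obstacle of the proof; it is the one-dimensional analogue of the distributional representations underlying Theorems A--C.

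Granted this representation, the identity
\[
\int_\RR (y'-y)_+ \phi''(y)\,dy = \phi(y') \qquad (\phi\in C_c^\infty(\RR)),
\]
together with a Fubini exchange, yields $\partial_y^2\psi(x,\cdot)=T_x$ as distributions, which gives property (2)(b) (with uniformity on compact sets from continuity of $\Psi$) and the representation \eqref{eq:1DCharRep}. For (2)(a), observe that on $\supp T_x\subseteq[-R(A),R(A)]$ the function $(\cdot-y)_+$ coincides with $0$ when $y>R(A)$ and with the affine function $y'\mapsto y'-y$ when $y<-R(A)$; hence $\psi(x,y)$ equals $0$ or $\Psi(\mathrm{id})[x]-y\,\Psi(1)[x]$ respectively, both of which are affine in $x$ by the preliminary observation, so $\supp\partial_x^2\psi(\cdot,y)$ is disjoint from $A$ whenever $|y|>R(A)$.

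For the if direction, given $\psi$ satisfying (1) and (2), I would define $\Psi$ by \eqref{eq:1DCharRep} on $\Conv(\RR,\RR)\cap C^2(\RR)$ and extend by continuity using density of $C^2$-convex functions in $\Conv(\RR,\RR)$. Additivity, continuity, and finiteness are essentially formal once (2)(b) is in hand. Convexity of $\Psi(f)$ in $x$ follows by writing $(\partial_y^2\psi(x,\cdot))(f)$ through cutoff-duality as a superposition of $\psi(x,\cdot)$-values weighted by the non-negative measure $f''$, plus boundary terms affine in $x$ by (2)(a), combined with property (1). Finally, the monotonicity characterization reduces to $\partial_y^2\psi(x,\cdot)$ being a non-negative measure for every $x$: sufficiency is immediate from $\Psi(f)[x]=\int f\,d(\partial_y^2\psi(x,\cdot))$, and necessity follows by applying monotonicity to pairs $f=0$ and $g=(\cdot-t)_+$ to conclude that $y\mapsto\psi(x,y)$ has non-negative distributional second derivative, i.e., is convex.
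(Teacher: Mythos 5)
Your overall architecture is the same as the paper's: set $\psi(x,y)=\Psi((\cdot-y)_+)[x]$ (the paper uses the mirror image $\Psi((y-\cdot)_+)[x]$, an immaterial difference), identify the evaluation functionals $\Psi_x$ with $\partial_y^2\psi(x,\cdot)$, read off the convexity/support properties, and reverse the construction for the converse. The genuine gap is exactly the step you yourself flag as the main obstacle: that each $T_x$ is a \emph{compactly supported} distribution, with a support bound that is \emph{uniform for $x$ in a compact set} -- this uniformity is precisely what property (2)(b) asserts. Your proposed justification, testing $T_x$ against the family $(\cdot-R)_+$, which epi-converges to $0$, only yields $\psi(x,R)\to 0$ as $R\to\infty$, i.e.\ decay of $\psi$ in $y$; it says nothing about the support of $\partial_y^2\psi(x,\cdot)$, since a distribution can nearly annihilate all the functions $(\cdot-R)_+$ for large $R$ while having unbounded support. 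Compact support of these Goodey--Weil distributions is a substantive theorem imported from elsewhere (Theorem~\ref{thm:exGWCompSupp}), and the locally uniform version needed for (2)(b) is obtained in the paper by applying that theorem to the Banach-space-valued map $\Psi_A:\Conv(\RR,\RR)\to C(A)$ (Proposition~\ref{propLocalBoundSupport}); ``continuity of $\Psi$'' alone, as you invoke it, delivers neither statement. Similarly, in the ``if'' direction, ``extend by continuity using density'' is not a construction: you must exhibit a formula valid for \emph{every} $f\in\Conv(\RR,\RR)$ and prove its continuity. The paper does this by splitting $\psi(x,\cdot)$ into a compactly supported part plus explicit affine corrections and integrating against the Monge--Amp\`ere (second-derivative) measure of $f$, whose weak continuity (Theorem~\ref{thm:defMongAmp}, via the classification in Theorem~\ref{thm:CLMCharValDim1}) supplies exactly the continuity you leave implicit; your ``cutoff-duality'' sketch of convexity in $x$ via (1) and (2)(a) is the right idea and matches the paper, but it presupposes this representation.

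A smaller but real error: for the necessity half of the monotonicity criterion, comparing $f=0$ with $g=(\cdot-t)_+$ only gives $\psi(x,t)=\Psi((\cdot-t)_+)[x]\ge 0$, i.e.\ non-negativity of $\psi(x,\cdot)$ itself, not of its second derivative. The correct argument (Lemma~\ref{lem:GWdistrMonot}) takes a non-negative test function $\phi$ and a convex $h$ such that $h$ and $h+\phi$ are both convex, and uses $h\le h+\phi$ together with additivity to get $(\partial_y^2\psi(x,\cdot))(\phi)=\Psi(h+\phi)[x]-\Psi(h)[x]\ge 0$; alternatively, one can use that $y\mapsto (z-y)_+$ is convex in $y$ and apply monotonicity and additivity to the resulting pointwise inequality. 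This is easily repaired, but as written the step does not prove what you claim.
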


Note that property~\eqref{propCharDim1_CondSuppsB} ensures that the distribution given by $\partial^2_y \psi(x,\cdot)$ has compact support and hence \eqref{eq:1DCharRep} is well-defined. The function $\psi$ is not uniquely determined by $\Psi$, but the possible modification can be described completely (see Corollary~\ref{cor:Uniqueness1Dpsi}). Moreover, the proof shows that a candidate for $\psi$ can be obtained from the underlying endomorphism $\Psi$ by plugging in a suitable convex function; the converse is slightly more involved. We provide an explicit representation of $f\mapsto \Psi(f)[x]$ using results on dually epi-translation invariant valuations on $\Conv(\RR, \RR)$ obtained by Colesanti--Ludwig--Mussnig \cite{Colesanti2019b}. However, these formulas only hold locally around $x\in\RR$. In particular, it is not easy to see under which conditions these endomorphisms extend to $\ConvO(\RR)$.

\medskip

As a closing remark we revisit some of the examples considered in Section \ref{section:GW} and reinterpret them in terms of Theorem \ref{mthm:Char1Dim}. This also provides some non-trivial examples for functions $\psi\in C(\RR^2)$ satisfying the conditions above.

\medskip
This article is structured as follows: In Section~\ref{sec:background}, we recall the necessary general background on convex functions. In Section~\ref{section:GW} we construct Goodey--Weil distributions, prove auxiliary results for endomorphisms and give examples. In the last three sections, finally, we prove the main results.

\section{Background on Convex Functions}
\label{sec:background}

In this section we recall additional basic notions and results about convex functions. As general references, we recommend the monographs by Rockafellar \cite{Rockafellar1970}, Rockafellar and Wets \cite{Rockafellar1998}, Schneider \cite{Schneider2014}, and Artstein-Avidan, Giannopoulos and Milman \cite{Artstein2015}. 

First recall that to every convex function $f: \RR^n \to (-\infty, \infty]$ we can associate its \emph{domain}, $\dom f = \{x\in\RR^n: f(x)<\infty\}$, and its \emph{epigraph}, $\epi f = \{(x,\xi) \in \RR^n \times \RR: f(x) \geq \xi\}$, which are convex sets. Note that, if $f \in \Conv(\RR^n)$, $\dom f$ is non-empty and $\epi f$ is closed and non-empty.

The spaces $\Conv(\RR^n), \ConvO(\RR^n)$ and $\Conv(\RR^n, \RR)$ defined in the introduction are equipped with the topology induced by epi-convergence (also called $\Gamma$-convergence), which corresponds to convergence of the epi-graphs in the Painlev\'e-Kuratowski sense (cf. \cite{Rockafellar1998}*{Sec. 7B}). A sequence $f_j$ of functions in $\Conv(\RR^n)$ (and therefore also in its subspaces $\ConvO(\RR^n)$ and $\Conv(\RR^n, \RR)$) is called \emph{epi-convergent} to $f: \RR^n \rightarrow (-\infty,\infty]$ if for all $x \in \RR^n$ the following two conditions hold:
\begin{enumerate}
\item\label{defEpiConvInfBed} $f(x) \leq \liminf_{j \rightarrow \infty} f_j(x_j)$ for every sequence $x_j$ that converges to $x$.
\item\label{defEpiConvConvSeq} There exists a sequence $x_j$ converging to $x$ such that $f(x) = \lim_{j \rightarrow \infty} f_j(x_j)$.
\end{enumerate}

The limit function $f$ is then necessarily convex and lower semi-continuous, but might be equal to $+\infty$. Let us also note that epi-convergence extends Hausdorff convergence in the sense that $\mathbbm{1}_{K_j}^\infty$ epi-converges to $\mathbbm{1}_K^\infty$ whenever $K_j \in \convexbodies$ converges to $K \in \convexbodies$ in the Hausdorff distance. Here, $\mathbbm{1}_K^\infty$ denotes the convex indicator function of $K \in \convexbodies$, defined by
\begin{align*}
 \mathbbm{1}_K^\infty(x) = \begin{cases}
                            0, & x \in K,\\
                            +\infty, & \text{ else}.
                           \end{cases}
\end{align*}
The following lemma gives a very useful condition for epi-convergence when the limit function (or a candidate for it) is known to be convex and finite in an open set.

\begin{lemma}[\cite{Rockafellar1998}*{Thm.~7.17}] \label{lem:epiConvEquiv}
If $f, f_j \in \Conv(\RR^n)$ and $\interior\dom f$ is non-empty, then the following statements are equivalent to $(f_j)_j$ being epi-convergent to~$f$:
\begin{enumerate}
\item \label{lem:epiConvEquivPWonDense} There exists a dense set $D \subseteq \RR^n$ such that $f_j(x) \rightarrow f(x)$ for every $x \in D$.
\item \label{lem:epiConvEquivUnifComp} The sequence $f_j$ converges uniformly to $f$ on every compact subset of $\RR^n$ that does not contain a boundary point of $\dom f$.
\end{enumerate}
\end{lemma}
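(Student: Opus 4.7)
The plan is to prove the circular chain (epi-convergence) $\Rightarrow$ \eqref{lem:epiConvEquivPWonDense} $\Rightarrow$ \eqref{lem:epiConvEquivUnifComp} $\Rightarrow$ (epi-convergence). The central tool, used repeatedly, is the classical fact that a family of convex functions that is pointwise bounded above on an open convex set is automatically equi-Lipschitz on every compact subset of that set, with Lipschitz constant controlled by the oscillation over the distance to the boundary. This lets us convert pointwise to uniform information.

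For the implication (epi-convergence) $\Rightarrow$ \eqref{lem:epiConvEquivPWonDense}, I would take the candidate dense set $D := \interior\dom f \cup (\RR^n \setminus \overline{\dom f})$, which is dense in $\RR^n$ since $\partial \dom f$ has empty interior (as $\dom f$ is convex with non-empty interior). For $x \in \RR^n \setminus \overline{\dom f}$, apply condition \eqref{defEpiConvInfBed} of epi-convergence with the constant sequence $x_j\equiv x$ to obtain $f_j(x) \to +\infty = f(x)$. For $x\in\interior\dom f$, condition \eqref{defEpiConvInfBed} directly gives $\liminf_j f_j(x)\ge f(x)$; for the reverse inequality, pick points $y_1,\dots,y_{n+1}\in\interior\dom f$ with $x$ in the relative interior of their convex hull, use condition \eqref{defEpiConvConvSeq} to produce approximating sequences $y_{i,j}\to y_i$ with $f_j(y_{i,j})\to f(y_i)$, write $x$ (for large $j$) as a convex combination of the $y_{i,j}$, and conclude via convexity and continuity of $f$ on $\interior\dom f$ by finally letting $y_i\to x$.

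For \eqref{lem:epiConvEquivPWonDense} $\Rightarrow$ \eqref{lem:epiConvEquivUnifComp}, decompose the given compact $K$ disjoint from $\partial\dom f$ as $K_1\sqcup K_2$ with $K_1:=K\cap\interior\dom f$ and $K_2:=K\cap(\RR^n\setminus\overline{\dom f})$; both are compact. On $K_1$: by density of $D$ in $\interior\dom f$, select finitely many points $y_1,\dots,y_N\in D\cap\interior\dom f$ whose convex hull contains a compact neighborhood of $K_1$ in $\interior\dom f$. Pointwise convergence gives $\sup_j\max_i f_j(y_i)<\infty$, and convexity propagates this to their convex hull; the equi-Lipschitz estimate then yields uniform convergence on $K_1$ using pointwise convergence on the dense subset $D\cap K_1$. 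On $K_2$: suppose by contradiction that $f_{j_k}(x_k)\le M$ along some subsequence with $x_k\to x_\infty\in K_2$. Fix $y\in D\cap\interior\dom f$ and choose $z\in D\cap(\RR^n\setminus\overline{\dom f})$ on the ray from $y$ through $x_\infty$, just beyond $x_\infty$, so that for large $k$ the point $x_k$ lies (essentially) on the segment $[y,z]$; convexity $f_{j_k}(x_k)\le\lambda_k f_{j_k}(z)+(1-\lambda_k)f_{j_k}(y)$ with $\lambda_k$ bounded away from $0$ contradicts $f_{j_k}(z)\to+\infty$ against boundedness of $f_{j_k}(x_k)$ and $f_{j_k}(y)$.

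For \eqref{lem:epiConvEquivUnifComp} $\Rightarrow$ (epi-convergence), verify conditions \eqref{defEpiConvInfBed} and \eqref{defEpiConvConvSeq}. Whenever $x\notin\partial\dom f$, any sequence $x_j\to x$ eventually lies in a compact neighborhood of $x$ disjoint from $\partial\dom f$, so uniform convergence directly gives $f_j(x_j)\to f(x)$, from which both defining conditions follow by choosing $x_j\equiv x$ as the recovery sequence. At $x\in\partial\dom f$, condition \eqref{defEpiConvInfBed} is the hard part: lower semi-continuity of $f$ combined with uniform control on interior points approaching $x$ yields $\liminf f_j(x_j)\ge f(x)$ via a standard diagonalization; condition \eqref{defEpiConvConvSeq} is produced by choosing $x_j = x + t_j v$ with $v$ pointing into $\interior\dom f$ and $t_j\downarrow 0$ slowly enough, exploiting the fact that for a proper lsc convex function $\lim_{t\downarrow 0}f(x+tv)=f(x)$ when $v$ points into the interior of $\dom f$.

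The main obstacle is this last step at boundary points: the uniform convergence in \eqref{lem:epiConvEquivUnifComp} is explicitly silent on $\partial\dom f$, yet epi-convergence must control behavior there. Bridging this gap requires invoking the inner semi-continuity of convex functions along rays into the interior — a feature of lsc convex functions that is not available for general lower semi-continuous functions — which is ultimately why the hypothesis $\interior\dom f\neq\emptyset$ cannot be dropped.
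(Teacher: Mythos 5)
The paper offers no proof of this lemma; it is quoted directly from Rockafellar--Wets, Thm.~7.17, so your attempt has to be judged on its own merits. Your overall architecture (the cycle epi-convergence $\Rightarrow$ \eqref{lem:epiConvEquivPWonDense} $\Rightarrow$ \eqref{lem:epiConvEquivUnifComp} $\Rightarrow$ epi-convergence, driven by local equi-Lipschitz bounds and radial limits of lsc convex functions at boundary points of the domain) is the standard direct route and is sound in outline. However, one step is genuinely wrong as written: the uniform divergence of $f_j$ on $K_2 = K\cap(\RR^n\setminus\overline{\dom f})$. You place $z$ on the ray from $y$ through $x_\infty$ \emph{beyond} $x_\infty$, so that $x_k$ lies between $y$ and $z$, and invoke $f_{j_k}(x_k)\le \lambda_k f_{j_k}(z)+(1-\lambda_k)f_{j_k}(y)$. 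Since $f_{j_k}(z)\to+\infty$, the right-hand side tends to $+\infty$, and the inequality is vacuously compatible with $f_{j_k}(x_k)\le M$; no rearrangement of it produces a contradiction. The convexity inequality must be used in the opposite direction: the bounded values must sit at the \emph{endpoints} and the divergent test point in the \emph{middle}. Concretely, take a ball $B_\rho(y_0)\subset\interior\dom f$ on which $f_{j_k}$ is uniformly bounded above (from your $K_1$ analysis), pick $z\in D\cap(\RR^n\setminus\overline{\dom f})$ in the interior of $\mathrm{conv}\bigl(B_\rho(y_0)\cup\{x_\infty\}\bigr)$, and note that $z\in\mathrm{conv}\bigl(B_\rho(y_0)\cup\{x_k\}\bigr)$ for large $k$; convexity then gives $f_{j_k}(z)\le\max\{M,\sup_{B_\rho(y_0)}f_{j_k}\}$, contradicting $f_{j_k}(z)\to+\infty$. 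This correction also removes the unresolved ``$x_k$ lies (essentially) on the segment'' issue, since membership of a fixed $z$ in the moving convex hull is stable under $x_k\to x_\infty$.

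Two smaller points. First, in the $K_1$ step, $D\cap K_1$ need not be dense in $K_1$ (e.g.\ if $K_1$ is a single point); the equi-Lipschitz/three-epsilon argument should use $D\cap U$ for an open neighborhood $U$ of $K_1$ inside $\interior\dom f$, where density of $D$ actually helps. Second, in the final implication at $x\in\partial\dom f$, the ``standard diagonalization'' for condition \eqref{defEpiConvInfBed} should be made explicit: for an arbitrary $x_j\to x$ and $y_0\in\interior\dom f$, the points $p_{t,j}=(1-t)x_j+ty_0$ satisfy $f_j(p_{t,j})\le(1-t)f_j(x_j)+tf_j(y_0)$, so uniform convergence near $p_t=(1-t)x+ty_0$ gives $\liminf_j f_j(x_j)\ge\bigl(f(p_t)-tf(y_0)\bigr)/(1-t)$, and letting $t\downarrow0$ with the radial limit $f(p_t)\to f(x)$ finishes the argument; this is exactly where your observation that $\interior\dom f\ne\emptyset$ is indispensable enters.
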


\noindent
An easy application of Lemma~\ref{lem:epiConvEquiv} is the proof of continuity of the addition of functions of $\ConvO(\RR^n)$ and of the standard $\GL(n)$-representation on $\RR^n$, given for $\eta \in \GL(n)$ by $(\eta \cdot f)(x) = f(\eta^{-1}x)$, $x \in \RR^n$. See, e.g., \cite{Knoerr2020a}*{Lem.~4.7} for a proof of the first statement.

\begin{lemma}
	\label{lem:contAddition}
	The maps
	
	\begin{minipage}{0.35\textwidth}
	\begin{align*}
		\ConvO(\RR^n)\times\ConvO(\RR^n)&\rightarrow \ConvO(\RR^n)\\
		(f_1,f_2)&\mapsto f_1+f_2
	\end{align*}
	\smallskip
	\end{minipage}
	$\,\,$
	and 
	$\,\,$
	\begin{minipage}{0.35\textwidth}
    \begin{align*}
        \ConvO(\RR^n) &\rightarrow \ConvO(\RR^n)\\
        f &\mapsto \eta \cdot f
	\end{align*}
	\smallskip
	\end{minipage}
	
	\noindent are continuous for every $\eta \in \GL(n)$.
\end{lemma}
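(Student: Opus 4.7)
The plan is to apply Lemma~\ref{lem:epiConvEquiv} in both cases. A preliminary check confirms that the two operations indeed map into $\ConvO(\RR^n)$: if $f_1,f_2\in\ConvO(\RR^n)$ are finite on respective open neighborhoods $U_1,U_2$ of the origin, then $f_1+f_2$ is lower semicontinuous, convex, proper, and finite on $U_1\cap U_2$; and if $f\in\ConvO(\RR^n)$ is finite on a neighborhood $U$ of $0$, then $\eta\cdot f=f\circ\eta^{-1}$ is finite on $\eta(U)$, which is again a neighborhood of $0$. In particular, in both cases the resulting function has non-empty interior of its domain, so Lemma~\ref{lem:epiConvEquiv} is available.

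For the $\GL(n)$-action, I would verify epi-convergence directly from the definition. Let $f_j\to f$ in $\ConvO(\RR^n)$. For the $\liminf$-condition, any sequence $x_j\to x$ yields $\eta^{-1}x_j\to\eta^{-1}x$, so $\liminf_j(\eta\cdot f_j)(x_j)=\liminf_j f_j(\eta^{-1}x_j)\ge f(\eta^{-1}x)=(\eta\cdot f)(x)$ by epi-convergence of the $f_j$. For a recovery sequence, pick $y_j\to\eta^{-1}x$ with $f_j(y_j)\to f(\eta^{-1}x)$ and set $x_j:=\eta y_j$; then $x_j\to x$ and $(\eta\cdot f_j)(x_j)=f_j(y_j)\to(\eta\cdot f)(x)$.

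For the addition, let $f_i^{(j)}\to f_i$ in $\ConvO(\RR^n)$ for $i=1,2$. The $\liminf$-bound follows from $\liminf_j(a_j+b_j)\ge\liminf_j a_j+\liminf_j b_j$ in $(-\infty,+\infty]$. For the recovery sequence at a given $x\in\RR^n$, if $x$ lies in the open set $U:=\interior\dom f_1\cap\interior\dom f_2$ (non-empty, as it contains a neighborhood of $0$), then Lemma~\ref{lem:epiConvEquiv}\eqref{lem:epiConvEquivUnifComp} yields $f_i^{(j)}(x)\to f_i(x)$, so the constant sequence $x_j:=x$ works. If instead $x\notin\dom(f_1+f_2)$, say $f_1(x)=+\infty$, the $\liminf$-condition forces $f_1^{(j)}(x)\to+\infty$ while $f_2^{(j)}(x)$ stays bounded below (combining the $\liminf$-condition with $f_2(x)>-\infty$), so $x_j:=x$ again works.

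The main obstacle is the boundary case where $x\in\partial\dom(f_1+f_2)$ with $(f_1+f_2)(x)<+\infty$: here the constant sequence need not be a recovery sequence, and one must approach $x$ from the interior. I would exploit that lower semicontinuous convex functions are continuous along inward radial segments, so $f_i(x+tv)\to f_i(x)$ as $t\searrow 0$ for any $v\in\interior\dom(f_1+f_2)-x$, and then invoke a standard diagonal argument combining this inward continuity with the interior convergence from the previous case to produce the desired recovery sequence.
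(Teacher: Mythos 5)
Your argument is correct, but it takes a more hands-on route than the one the paper has in mind (the paper gives no proof, pointing to \cite{Knoerr2020a}*{Lem.~4.7} and describing the statement as an easy application of Lemma~\ref{lem:epiConvEquiv}). The intended short proof uses the dense-set criterion of Lemma~\ref{lem:epiConvEquiv}\eqref{lem:epiConvEquivPWonDense}: since $f_1+f_2\in\ConvO(\RR^n)$ has domain with non-empty interior, it suffices to check $f_1^{(j)}(x)+f_2^{(j)}(x)\to f_1(x)+f_2(x)$ on the dense set $\bigl(\interior\dom f_1\cap\interior\dom f_2\bigr)\cup\bigl(\RR^n\setminus\overline{\dom f_1}\bigr)\cup\bigl(\RR^n\setminus\overline{\dom f_2}\bigr)$, whose complement lies in $\partial\dom f_1\cup\partial\dom f_2$ and is nowhere dense; this makes your boundary case disappear entirely, because no recovery sequence ever has to be produced at points of $\partial\dom(f_1+f_2)$. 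You instead verify the definition of epi-convergence directly, using Lemma~\ref{lem:epiConvEquiv}\eqref{lem:epiConvEquivUnifComp} only at interior points and then constructing recovery sequences at finite boundary points via continuity of lower semicontinuous convex functions along segments into the interior plus a diagonal extraction. Both proofs are valid: yours is more elementary in that it only needs the ``epi-convergence implies locally uniform convergence'' direction of Lemma~\ref{lem:epiConvEquiv} and is correctly identified as reducing to the single genuinely delicate case $x\in\partial\dom(f_1+f_2)$ with $(f_1+f_2)(x)<\infty$ (where, as you note, constant sequences can fail, e.g.\ for shrinking indicator functions), whereas the dense-set argument is shorter and is the reason the paper can dismiss the lemma in one line. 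Your treatment of the $\GL(n)$-action and of the cases $x\in\interior\dom(f_1+f_2)$ and $(f_1+f_2)(x)=\infty$ is exactly right; the only sketched step is the final diagonal argument, which is standard and unproblematic given the radial continuity $f_i(x+tv)\to f_i(x)$ that you justify correctly from convexity and lower semicontinuity on the segment $[x,x+v]\subset\dom f_1\cap\dom f_2$.
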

\noindent
We will need the following two auxiliary statements about convex functions.%
\begin{lemma}
	\label{lem:unifLowerBound}
	Let $f_j,f\in\Conv(\RR^n)$, for $j\in\NN$, and $A \subseteq \RR^n$ be compact. If $(f_j)_j$ epi-converges to $f$, then there exists an affine function $g:\RR^n \rightarrow \RR$ such that
	\begin{align*}
		f_j(x)\geq g(x), \quad f(x)\geq g(x)\quad\forall x \in A, j\in\NN.
	\end{align*}
\end{lemma}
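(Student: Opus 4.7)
The plan is to show that one may take $g$ to be a constant, which reduces the lemma to a uniform lower bound on $A$. Observe first that $\inf_{x \in A} f(x) > -\infty$: since $f \in \Conv(\RR^n)$ is proper, convex and lower semi-continuous, it takes values in $(-\infty,+\infty]$, and compactness of $A$ forces its infimum over $A$ to be either a finite minimum or $+\infty$. Hence it suffices to establish
\[
 \inf_{j \in \NN,\, x \in A} f_j(x) > -\infty,
\]
because then choosing $R$ larger than the absolute value of both infima gives $f_j,f \geq -R$ on $A$, and the constant $g \equiv -R$ is the desired affine function.

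To prove this uniform bound I would argue by contradiction. If it fails, one selects indices $j_k \in \NN$ and points $x_k \in A$ with $f_{j_k}(x_k) \to -\infty$, and by compactness of $A$ one may pass to a subsequence with $x_k \to x_* \in A$. I would then split into two cases depending on whether $(j_k)$ admits a bounded subsequence.

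If $(j_k)$ is bounded along a subsequence, one may assume after a further extraction that $j_k$ is constantly equal to some $j_0$, and the lower semi-continuity of $f_{j_0}$ at $x_*$ yields $\liminf_k f_{j_0}(x_k) \geq f_{j_0}(x_*) > -\infty$, contradicting $f_{j_k}(x_k) \to -\infty$. If instead $j_k \to \infty$, extend $(x_k)$ to a sequence $(y_j)_{j \in \NN}$ converging to $x_*$ by setting $y_{j_k} := x_k$ and $y_j := x_*$ for all remaining $j$; property~\eqref{defEpiConvInfBed} of epi-convergence then gives
\[
 f(x_*) \leq \liminf_{j \to \infty} f_j(y_j) \leq \liminf_{k \to \infty} f_{j_k}(x_k) = -\infty,
\]
contradicting properness of $f$.

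There is no serious obstacle here; the lemma reduces to a routine compactness argument that combines lower semi-continuity of each individual $f_j$ with the $\liminf$-half of the definition of epi-convergence. The only mild subtlety is the case split on whether the selected indices $j_k$ escape to $+\infty$, which decides whether one appeals to the lsc of a single $f_{j_0}$ or to the epi-convergence of the whole sequence to $f$.
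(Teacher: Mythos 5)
Your proof is correct, but it takes a genuinely different route from the paper's. The paper anchors the bound to an affine minorant $g'$ of $f$: it passes to epigraphs and invokes \cite{Rockafellar1998}*{Thm.~4.10(b)} on Painlev\'e--Kuratowski set convergence to get $\epi f_j \cap (A\times[-R,R]) \subseteq \epi(g'-\varepsilon)$ for all $j\geq j_0$, hence $f_j \geq g'-\varepsilon$ on $A$ for large $j$, and then treats the finitely many remaining indices by lower semi-continuity on the compact set $A$; the resulting $g$ is an explicit downward shift of $g'$. You instead observe that on a compact set a constant bound suffices (a constant is affine) and prove the uniform bound by contradiction, using only compactness of $A$, lower semi-continuity of a single $f_{j_0}$ when the offending indices admit a bounded subsequence, and the liminf half of the sequential definition of epi-convergence when they tend to infinity. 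Your argument is more elementary and self-contained — no appeal to the epigraph-convergence theorem — while the paper's version produces the bound constructively in terms of an affine minorant of $f$, which is more information than the statement requires. One small point to tidy: the assignment $y_{j_k}:=x_k$ is only well defined if the $j_k$ are pairwise distinct, so in the case $j_k\to\infty$ you should first pass to a strictly increasing subsequence of $(j_k)$; this is harmless, since the liminf along the full sequence is still dominated by the liminf along that subsequence, and the contradiction with $f(x_*)>-\infty$ goes through unchanged.
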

\begin{proof}
 Let $g'$ be any affine function that bounds $f$ from below, which exists due to the convexity of $f$. Then their epi-graphs satisfy $\epi g' \supseteq \epi f$. As epi-convergence is equivalent to the convergence of the epigraphs, \cite{Rockafellar1998}*{Thm.~4.10(b)} implies that for every $\varepsilon > 0$ and every set $A' = A \times [-R,R]$, $R>0$, there exists $j_0 \in \NN$, such that
 \begin{align*}
  \epi f_j \cap A' \subseteq \epi (g'-\varepsilon), \quad j \geq j_0
 \end{align*}
 In particular, by choosing $R>0$ such that $g'(x)-\varepsilon> -R$ for every $x \in A$, we deduce that $f_j(x) \geq g'(x)-\varepsilon$, $x \in A$ and $j \geq j_0$. Indeed, if $f_j(x) < g'(x) - \varepsilon$ for some $x \in A$ and $j\geq j_0$, then $c:=\max\{f_j(x),-R\} < g'(x)-\varepsilon$. Thus $(x,c) \not \in \epi (g'-\varepsilon)$, but $(x,c) \in \epi f_j \cap A'$, a contradiction.
 
 Finally, note that by semi-continuity, every $f_j - g'$ is bounded from below on the compact set $A$. Letting $\varepsilon > 0$ be arbitrary and choosing $C \in \RR$ to be a common lower bound for all $j \leq j_0$, the claim follows by setting $g := g' - \varepsilon + \min\{C,0\}$.
\end{proof}

In order to state the next proposition, denote by $B_\varepsilon(x)$ the ball of radius $\varepsilon$ centered at $x \in \RR^n$.

\begin{proposition}[\cite{Rockafellar1998}*{Ex.~9.14}]
	\label{prop:uniformLipschitz}
	Let $U\subset \RR^n$ be a convex open subset and $f:U\rightarrow\RR$ a convex function. If $X\subset U$ is a set with $X+ B_\varepsilon(0)\subset U$ and $f$ is bounded on $X+ B_\varepsilon(0)$, then $f$ is Lipschitz continuous on $X$ with Lipschitz constant $\frac{2}{\varepsilon}\sup_{x\in X+B_\varepsilon}|f(x)|$.
\end{proposition}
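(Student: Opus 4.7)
The result is classical and admits a short argument via three-point convexity; the plan is to use the bound on $f$ over the $\varepsilon$-thickening of $X$ to control the one-sided difference quotient at every pair of points in $X$. Write $M := \sup_{z \in X+B_\varepsilon(0)} |f(z)|$ and fix distinct $x,y \in X$. The trick is to extend the segment from $x$ through $y$ by a fixed amount $\varepsilon$ and apply convexity at $y$. Concretely, set
\[
  z := y + \varepsilon\, \frac{y-x}{\|y-x\|}.
\]
Since $\|z-y\| = \varepsilon$ and $y \in X$, the point $z$ lies in $y+B_\varepsilon(0) \subseteq X+B_\varepsilon(0) \subseteq U$, so $f(z)$ is defined and $|f(z)| \le M$.

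Next, I would observe that $y$ lies on the segment from $x$ to $z$. A direct computation of the parameter gives
\[
  y = \lambda x + (1-\lambda) z, \qquad \lambda = \frac{\varepsilon}{\|y-x\|+\varepsilon} \in (0,1),
\]
so convexity of $f$ yields $f(y) \le \lambda f(x) + (1-\lambda) f(z)$. Rearranging and using $|f(x)|,|f(z)| \le M$,
\[
  f(y) - f(x) \le (1-\lambda)\bigl(f(z)-f(x)\bigr) \le \frac{\|y-x\|}{\|y-x\|+\varepsilon}\cdot 2M \le \frac{2M}{\varepsilon}\,\|y-x\|.
\]
Swapping the roles of $x$ and $y$ gives the reverse inequality, and hence $|f(y)-f(x)| \le \frac{2M}{\varepsilon}\|y-x\|$ for all $x,y \in X$, which is precisely the asserted Lipschitz estimate with constant $\frac{2}{\varepsilon}\sup_{X+B_\varepsilon(0)}|f|$.

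There is no real obstacle: the only point to check carefully is that the auxiliary point $z$ remains inside the region where $f$ is bounded, which is exactly what the hypothesis $X + B_\varepsilon(0) \subseteq U$ is designed to guarantee. This also explains why the Lipschitz constant scales as $1/\varepsilon$: shrinking the neighbourhood on which $f$ is controlled forces the auxiliary point $z$ closer to $y$, which in turn worsens the ratio $(1-\lambda)/\|y-x\|$.
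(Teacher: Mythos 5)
Your argument is correct and is essentially the classical proof of this fact; the paper itself offers no proof but simply cites Rockafellar--Wets (Ex.~9.14), where the same three-point convexity estimate underlies the result, so there is nothing to compare beyond noting that your computation of $\lambda=\varepsilon/(\|y-x\|+\varepsilon)$ and the resulting bound $2M\|y-x\|/\varepsilon$ are accurate. The only point worth a sentence is the ball convention: if $B_\varepsilon(0)$ is taken open, your auxiliary point $z=y+\varepsilon\frac{y-x}{\|y-x\|}$ sits on the boundary of $y+B_\varepsilon(0)$, so one should either use the closed ball or replace $\varepsilon$ by $\varepsilon-\delta$ and let $\delta\to 0$, which yields the same constant.
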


\section{Endomorphisms and Goodey--Weil Distributions}
\label{section:GW}

In this section, we will construct a family of Goodey--Weil distributions for every endomorphism of $\Conv(\RR^n,\RR)$, prove some basic results for endomorphisms and give some examples.

\medskip

Before describing the constructions, let us recall some basic notation for distributions. As a general reference on distributions we recommend the book by H\"ormander \cite{Hoermander2003}. We denote by $C_c^\infty(\RR^n)$ the space of all smooth functions on $\RR^n$ with compact support (test functions) and denote by $\Distrib(\RR^n, F)$ the space of all distributions on $\RR^n$ with values in a complete topological vector space $F$, that is, all continuous linear maps $C_c^\infty(\RR^n) \rightarrow F$. If $F = \RR$, we will write $\Distrib(\RR^n) = \Distrib(\RR^n, \RR)$. We will simply write $u(\varphi)$ for the application of a distribution $u \in \Distrib(\RR^n,F)$ to $\varphi \in C_c^\infty(\RR^n)$.

Moreover, let $\supp u \subseteq \RR^n$ denote the support of $u \in \Distrib(\RR^n)$ and $\DistribC(\RR^n, F)$ the space of all distributions with compact support. It is well-known that every distribution with compact support can be extended to $C^\infty(\RR^n)$.

\subsection{General Construction}
Our construction is very much inspired by results of Goodey and Weil \cite{Goodey1984}, who defined the Goodey--Weil distributions of real-valued valuations on convex bodies, and relies on the following analogous result for continuous, dually epi-translation invariant valuations on convex functions proven in \cite{Knoerr2020a}. Note that for continuous, additive maps defined on $\Conv(\RR^n,\RR)$ (which are one-homogeneous valuations), the additional invariance property that is used in \cite{Knoerr2020a} may be dropped without affecting the results.

\begin{theorem}[\cite{Knoerr2020a}*{Thm.~2}]\label{thm:exGWCompSupp}
 Let $F$ be a locally convex vector space and $\overline{F}$ its completion. For every continuous and additive $\mu: \Conv(\RR^n,\RR) \rightarrow F$ there exists a uniquely determined distribution $\GW(\mu) \in \Distrib(\RR^n,\overline{F})$ which satisfies
 \begin{align}\label{eq:GWonCinfbyPsi}
 \GW(\mu)(\phi) = \mu(f+\phi) - \mu(f),
 \end{align}
 for every $\phi \in C_c^\infty(\RR^n)$ and $f \in \Conv(\RR^n,\RR)$ such that $f + \phi \in \Conv(\RR^n,\RR)$, and $\GW(\mu)$ determines $\mu$ uniquely. %
 If $F$ admits a continuous norm, then $\GW(\mu)$ has compact support and $\GW(\mu)[f]=\mu(f)$ for all $f\in\Conv(\RR^n,\RR)\cap C^\infty(\RR^n)$.%
\end{theorem}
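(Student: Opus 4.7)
The plan is to define $\GW(\mu)$ by the prescribed formula
\begin{align*}
\GW(\mu)(\phi) := \mu(f+\phi) - \mu(f),
\end{align*}
for an arbitrary $f\in\Conv(\RR^n,\RR)$ chosen so that $f+\phi$ is still convex. Such an $f$ always exists: for instance, $f(x)=c\|x\|^2$ with $c>\tfrac12\|D^2\phi\|_\infty$ works. Independence of the choice of $f$ is a one-line computation: for two admissible $f_1,f_2$, the function $f_1+f_2+\phi$ lies in $\Conv(\RR^n,\RR)$, and additivity of $\mu$ gives
\begin{align*}
\mu(f_1+\phi)+\mu(f_2)=\mu(f_1+f_2+\phi)=\mu(f_1)+\mu(f_2+\phi).
\end{align*}

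\textbf{Linearity and continuity in $\phi$.} Additivity of $\GW(\mu)$ in $\phi$ is proved in the same spirit: fix $f$ convex enough that $f+\phi_1$, $f+\phi_2$, $f+\phi_1+\phi_2$, and $2f+\phi_1+\phi_2$ all lie in $\Conv(\RR^n,\RR)$, then evaluate $\mu((f+\phi_1)+(f+\phi_2))=\mu(2f+\phi_1+\phi_2)$ in two ways. Scalar linearity follows because $t\mapsto \mu(f+t\phi)$ solves a Cauchy equation on an interval around $0$ (additivity gives rational-homogeneity), which sequential continuity of $\mu$ upgrades to $\RR$-linearity. For continuity as a distribution, if $\phi_j\to\phi$ in $C_c^\infty(\RR^n)$ then the supports lie in a common compact set and the $C^2$-norms are uniformly bounded, so a single $f$ handles all indices; $f+\phi_j\to f+\phi$ uniformly on compacta, hence epi-converges by Lemma~\ref{lem:epiConvEquiv}, and continuity of $\mu$ yields $\GW(\mu)(\phi_j)\to\GW(\mu)(\phi)$. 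The values are taken in the completion $\overline F$ because limits of Cauchy sequences in a possibly non-complete $F$ need not lie in $F$.

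\textbf{Uniqueness of $\mu$ and refined claims.} Additivity forces $\mu(0)=0$ and, combined with continuity, $\mu(tf)=t\mu(f)$ for $t\geq 0$. To recover $\mu$ from $\GW(\mu)$, I would mollify $f\ast\rho_\varepsilon\to f$ (which epi-converges) to reduce to smooth convex $f$, and then use the identity $\mu(f)=\mu(f+c\|\cdot\|^2)-\mu(c\|\cdot\|^2)$ obtained from additivity to express $\mu(f)$ in terms of $\GW(\mu)$ applied to a compactly supported test function replacing the difference $f-(\text{reference})$. I expect the main obstacle to be the compact support claim under the continuous-norm hypothesis on $F$: the strategy should be a rescaling argument showing that $\|\GW(\mu)(\phi)\|_F$ is controlled by a local quantity of $\phi$ in a way that letting $\supp\phi$ recede to infinity forces vanishing, using that $\mu$ sends bounded epi-converging families to bounded nets in the norm. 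Once compact support is established, $\GW(\mu)$ extends to $C^\infty(\RR^n)$, and the identity $\GW(\mu)[f]=\mu(f)$ for smooth convex $f$ follows by applying the defining relation to the decomposition $f=\chi f+(1-\chi)f$ for a cutoff $\chi\equiv 1$ near $\supp\GW(\mu)$ together with a sufficiently convex reference, and one more appeal to additivity of $\mu$.
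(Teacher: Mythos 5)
First, a caveat: the paper does not prove this statement at all --- it is imported verbatim from \cite{Knoerr2020a} --- so there is no in-paper proof to compare against, and I can only judge your argument on its own terms. Your first two paragraphs are correct and are exactly the standard construction in the additive ($1$-homogeneous) case: the defining formula, independence of the auxiliary $f$ (your three-term additivity computation is valid because sums of finite convex functions are automatically in $\Conv(\RR^n,\RR)$), additivity and $\RR$-linearity in $\phi$ via rational homogeneity plus continuity, and sequential continuity on $C_c^\infty(\RR^n)$. One small remark: your formula takes values in $F$ itself; the completion $\overline{F}$ is only needed in \cite{Knoerr2020a} for the polarization/density step in higher degrees of homogeneity, not here.

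The last paragraph, however, leaves genuine gaps, and they concern precisely the nontrivial content of the theorem. (a) Compact support: you correctly flag it as the main obstacle, but "the strategy should be a rescaling argument" is not an argument, and the property is not that $\GW(\mu)(\phi)$ becomes small as $\supp\phi$ recedes, but that it vanishes identically outside a fixed compact set. The actual proof is by contradiction: pick $x_j\in\supp\GW(\mu)$ with $\|x_j\|\to\infty$ and bumps $\phi_j$ with pairwise disjoint supports near $x_j$, use the continuous norm to rescale so that $\|\GW(\mu)(\phi_j)\|=1$, and choose a single convex $f$ whose Hessian grows fast enough at infinity to absorb the (uncontrollable) Hessians of all the rescaled bumps; then the partial sums $f+\sum_{j\le N}\phi_j$ epi-converge, so by continuity the series $\sum_j\GW(\mu)(\phi_j)$ converges in $F$, while its terms have norm $1$ --- a contradiction. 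Nothing of this mechanism appears in your sketch. (b) The identity $\GW(\mu)[f]=\mu(f)$ for smooth convex $f$: the decomposition $f=\chi f+(1-\chi)f$ is not a decomposition into convex functions, so additivity of $\mu$ cannot be applied to it. What is really needed is the locality statement that $\mu(g)=\mu(h)$ whenever $g,h\in\Conv(\RR^n,\RR)$ coincide on a neighborhood of $\supp\GW(\mu)$ (so that $\mu(f_0+\chi f)=\mu(f_0+f)$ for a convex reference $f_0$); this is a separate, nontrivial proposition in \cite{Knoerr2020a} and does not follow from the defining relation, because $(1-\chi)f$ is not compactly supported and never enters $\GW(\mu)$. (c) Uniqueness of $\mu$: the identity $\mu(f)=\mu(f+c\|\cdot\|^2)-\mu(c\|\cdot\|^2)$ does not express $\mu(f)$ through $\GW(\mu)$, since the difference of the two arguments is $f$ itself, not a compactly supported function; the standard route is to compose with continuous linear functionals on $F$ (which separate points by Hahn--Banach) to reduce to the scalar-valued case, where compact support, the locality property from (b), and density of smooth convex functions yield that $\GW(\mu)$ determines $\mu$. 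As written, (a)--(c) are asserted rather than proved.
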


\noindent
We will apply Theorem~\ref{thm:exGWCompSupp} in two settings: for $F=\RR$ and for $F= C(W)$, the space of all continuous functions on a locally compact space $W$ equipped with the topology of uniform convergence on compact subsets. Suppose that $\Psi:\Conv(\RR^n,\RR) \rightarrow C(W)$ is additive and continuous, and consider for $x \in W$ the map
\begin{align*}
 \Psi_x: \begin{cases}
          \Conv(\RR^n,\RR) \rightarrow \RR,\\
          \Psi_x(f) = \Psi(f)[x].
         \end{cases}
\end{align*}
Then $\Psi_x$ clearly is a continuous and additive functional and therefore satisfies the conditions of Theorem~\ref{thm:exGWCompSupp}. By taking $W=\RR^n$ and the (continuous) inclusion $\Conv(\RR^n, \RR) \subseteq C(\RR^n)$ (compare Lemma \ref{lem:epiConvEquiv}\eqref{lem:epiConvEquivUnifComp}), we obtain a family of distributions with compact support:
\begin{definition}
 Let $\Psi: \Conv(\RR^n,\RR) \rightarrow \Conv(\RR^n,\RR)$ be continuous and additive. Then $(\GW(\Psi_x))_{x \in \RR^n}$ is the \emph{associated family of Goodey--Weil distributions} of~$\Psi$.
\end{definition}

\noindent
Coming from a map $\Psi:\Conv(\RR^n,\RR) \to \Conv(\RR^n, \RR)$, the functionals $\Psi_x$ depend continuously on $x \in \RR^n$. To make this more precise, consider for compact $A \subseteq \RR^n$
\begin{align*}
 \Psi_A: \begin{cases}
          \Conv(\RR^n,\RR) \rightarrow C(A),\\
          \Psi_A(f)[x] = \Psi(f)[x].
         \end{cases}
\end{align*}
The map $\Psi_A$ again satisfies the conditions of Theorem~\ref{thm:exGWCompSupp}, where $C(A)$ is equipped with the maximum norm. Note that the restriction to a compact subset $A$ is needed here to obtain a Goodey--Weil distribution $\GW(\Psi_A)$ with compact support.

Denoting by $i_x:C(A) \rightarrow \RR$, $x \in A$, the continuous evaluation map, we see that $\GW(\Psi_x) = i_x \circ \GW(\Psi_A)$. As a consequence, we get the following bound on $\supp \GW(\Psi_x) \subseteq \supp\GW(\Psi_A)$, $x \in A$.
\begin{proposition}
 \label{propLocalBoundSupport}
 Suppose that $\Psi:\Conv(\RR^n,\RR)\rightarrow C(\RR^n)$ is continuous and additive, where $C(\RR^n)$ is equipped with the topology of uniform convergence on compact subsets.
 Then for every compact $A\subseteq \RR^n$ there exists a compact $A'\subseteq \RR^n$ such that
 \begin{align*}
    \supp \GW(\Psi_x)\subset A',\quad\forall x\in A.
 \end{align*}
\end{proposition}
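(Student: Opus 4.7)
The plan is to apply Theorem~\ref{thm:exGWCompSupp} to the auxiliary map $\Psi_A:\Conv(\RR^n,\RR)\to C(A)$ constructed in the paragraph preceding the proposition and then to transfer the resulting compact support to each individual $\GW(\Psi_x)$ via the identity $\GW(\Psi_x)=i_x\circ\GW(\Psi_A)$.

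First I would fix a compact set $A\subseteq\RR^n$ and verify that $\Psi_A$ meets the hypotheses of Theorem~\ref{thm:exGWCompSupp} with target space $F=C(A)$ equipped with the maximum norm. Additivity is inherited directly from $\Psi$. Continuity follows by composing $\Psi:\Conv(\RR^n,\RR)\to C(\RR^n)$ (continuous by assumption, with $C(\RR^n)$ carrying the topology of uniform convergence on compacta) with the restriction map $C(\RR^n)\to C(A)$, $g\mapsto g|_A$, which is continuous with respect to the maximum norm on $C(A)$. Since $(C(A),\|\cdot\|_\infty)$ is normed, the last clause of Theorem~\ref{thm:exGWCompSupp} yields a Goodey--Weil distribution $\GW(\Psi_A)\in\DistribC(\RR^n,C(A))$ with compact support. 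I would then set $A':=\supp\GW(\Psi_A)$, which is the compact set claimed by the proposition.

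It remains to show that $\supp\GW(\Psi_x)\subseteq A'$ for every $x\in A$. For such $x$ the evaluation functional $i_x:C(A)\to\RR$, $g\mapsto g(x)$, is continuous and linear, and by construction $\Psi_x=i_x\circ\Psi_A$. I would establish the identity $\GW(\Psi_x)=i_x\circ\GW(\Psi_A)$ either by appealing to the uniqueness statement of Theorem~\ref{thm:exGWCompSupp} (both sides satisfy the defining relation~\eqref{eq:GWonCinfbyPsi} for the continuous additive functional $\Psi_x$) or, equivalently, by testing the defining relation against $\phi\in C^\infty_c(\RR^n)$ and $f\in\Conv(\RR^n,\RR)$ with $f+\phi\in\Conv(\RR^n,\RR)$. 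Once this identity is in hand, any $\phi\in C^\infty_c(\RR^n)$ with $\supp\phi\cap A'=\emptyset$ satisfies $\GW(\Psi_A)(\phi)=0\in C(A)$, whence $\GW(\Psi_x)(\phi)=i_x(0)=0$, giving $\supp\GW(\Psi_x)\subseteq A'$ uniformly in $x\in A$.

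The proof is essentially a bookkeeping argument, and there is no serious obstacle; the only point requiring a little care is the functoriality statement $\GW(\Psi_x)=i_x\circ\GW(\Psi_A)$, but this is an immediate consequence of the uniqueness part of Theorem~\ref{thm:exGWCompSupp} since post-composition with a continuous linear map preserves both continuity and additivity and commutes with the defining formula~\eqref{eq:GWonCinfbyPsi}.
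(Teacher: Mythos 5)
Your proposal is correct and follows exactly the paper's argument: apply Theorem~\ref{thm:exGWCompSupp} to $\Psi_A$ with the normed target $C(A)$ to obtain a compactly supported $\GW(\Psi_A)$, and then use the identity $\GW(\Psi_x)=i_x\circ\GW(\Psi_A)$ (justified by the uniqueness in Theorem~\ref{thm:exGWCompSupp}) to get $\supp\GW(\Psi_x)\subseteq\supp\GW(\Psi_A)=:A'$ for all $x\in A$. The only difference is that you spell out the continuity of $\Psi_A$ and the functoriality step, which the paper leaves implicit.
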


In the one-dimensional case, the (continuous) relation between the distributions is made explicit in Theorem~\ref{mthm:Char1Dim}.

\medskip

Another way to connect the associated Goodey--Weil distributions $\GW(\Psi_x)$ at different points $x\in\RR^n$ are equivariance properties of $\Psi$. Recall that the standard representation of $\GL(n)$ on functions extends to $\Distrib(\RR^n)$ by $(\eta \cdot u)(\varphi) = u(\eta^{-1} \cdot \varphi)$, $u \in \Distrib(\RR^n), \varphi \in C_c^\infty(\RR^n)$ and $\eta \in \GL(n)$. %

\begin{lemma}\label{lem:equivPropGWdistr}
 Suppose that $\Psi: \Conv(\RR^n,\RR) \to \Conv(\RR^n,\RR)$ is continuous and additive and let $G\subseteq \GL(n)$ be a subgroup.
 
 If $\Psi$ is $G$-equivariant, then for every $x \in \RR^n$ and $\eta \in G$,
 \begin{align*}
  \GW(\Psi_{\eta(x)})[\phi] = \left(\eta \cdot \GW(\Psi_x)\right)[\phi] = \GW(\Psi_x)[\eta^{-1}\cdot\phi], \quad \phi \in C^\infty_c(\RR^n).
 \end{align*}
 In particular, $\GW(\Psi_x)$ is invariant under $G_x$, the stabilizer of $x$ in $G$.
\end{lemma}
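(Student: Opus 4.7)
The plan is to reduce everything to the defining relation \eqref{eq:GWonCinfbyPsi} of the Goodey--Weil distribution, combined with the pointwise form of $G$-equivariance. The condition $\Psi(\eta\cdot f)=\eta\cdot\Psi(f)$, evaluated at a point and using $(\eta\cdot h)(y)=h(\eta^{-1}y)$, translates into the convenient identity $\Psi_{\eta x}(g)=\Psi_x(\eta^{-1}\cdot g)$ on functionals, valid for every $\eta\in G$, $x\in\RR^n$ and $g\in\Conv(\RR^n,\RR)$. This is the only place where equivariance is used.

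The main step is then computational. Fix $\phi\in C_c^\infty(\RR^n)$ and choose some $f\in\Conv(\RR^n,\RR)$ with $f+\phi\in\Conv(\RR^n,\RR)$ (e.g.\ a sufficiently large multiple of $\|\cdot\|^2$). Apply \eqref{eq:GWonCinfbyPsi} to the functional $\Psi_{\eta x}$ to express $\GW(\Psi_{\eta x})(\phi)$ as $\Psi_{\eta x}(f+\phi)-\Psi_{\eta x}(f)$. Using the equivariance identity just noted, this becomes $\Psi_x(\eta^{-1}\cdot(f+\phi))-\Psi_x(\eta^{-1}\cdot f)$. Since the $\GL(n)$-action is linear in the function, the first term splits as $\Psi_x\bigl((\eta^{-1}\cdot f)+(\eta^{-1}\cdot\phi)\bigr)$, and because $\GL(n)$ preserves $\Conv(\RR^n,\RR)$ and maps $C_c^\infty(\RR^n)$ to itself, the pair $(\eta^{-1}\cdot f,\eta^{-1}\cdot\phi)$ again satisfies the hypotheses of \eqref{eq:GWonCinfbyPsi}. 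Applying that relation a second time, now to $\Psi_x$, yields $\GW(\Psi_x)(\eta^{-1}\cdot\phi)$. The definition $(\eta\cdot u)(\phi)=u(\eta^{-1}\cdot\phi)$ of the $\GL(n)$-action on distributions then rewrites this as $(\eta\cdot\GW(\Psi_x))(\phi)$, so both claimed equalities drop out simultaneously.

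The second assertion is immediate: for $\eta\in G_x$ one has $\eta x=x$, so the identity just established reduces to $\GW(\Psi_x)=\eta\cdot\GW(\Psi_x)$. I do not expect any genuine obstacle in the argument; the only subtlety worth verifying carefully is that the set of admissible pairs $\{(f,\phi):f,f+\phi\in\Conv(\RR^n,\RR)\}$ used in \eqref{eq:GWonCinfbyPsi} is preserved by the diagonal action $(f,\phi)\mapsto(\eta^{-1}\cdot f,\eta^{-1}\cdot\phi)$, which is automatic from the fact that $\GL(n)$ acts by linear substitutions and hence preserves convexity, finiteness, and compactness of support.
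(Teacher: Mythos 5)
Your argument is correct and is exactly the ``easy consequence of \eqref{eq:GWonCinfbyPsi}'' that the paper's one-line proof alludes to: you translate equivariance into $\Psi_{\eta x}(g)=\Psi_x(\eta^{-1}\cdot g)$, apply the defining relation twice, and check that the admissible pairs $(f,\phi)$ are preserved by the linear substitution. Nothing is missing; the stabilizer statement follows immediately as you say.
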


\begin{proof}
 The proof is an easy consequence of \eqref{eq:GWonCinfbyPsi}.
\end{proof}

\noindent
Lemma~\ref{lem:equivPropGWdistr} implies in particular that there is essentially one Goodey--Weil distribution for every orbit of $G$ for $G$-equivariant endomorphisms. If $G=\GL(n)$ or $G$ is the subgroup generated by $\SO(n)$ and dilations, there are exactly two orbits, namely $\{0\}$ and $\RR^n\setminus \{0\}$, where the values on the $\{0\}$-orbit are already determined by continuity. This reduces the problem to analyzing only one distribution.

\subsection{Auxiliary Results}
In this section we prove some auxiliary results for endomorphisms, the first one showing that the associated Goodey--Weil distributions of monotone endomorphisms are actually non-negative measures.

\begin{lemma}\label{lem:GWdistrMonot}
 Suppose that $\Psi:\Conv(\RR^n,\RR) \rightarrow \Conv(\RR^n,\RR)$ is continuous and additive. Then $\Psi$ is monotone if and only if the Goodey--Weil distributions $\GW(\Psi_x)$, $x \in \RR^n$, are given by non-negative measures $\mu_x$ with compact support. In this case, 
 \begin{align}\label{eq:LemGWDistrMonDarst}
  \Psi(f)[x] = \int_{\RR^n} f d \mu_x,\quad x \in \RR^n,
 \end{align}
 for every $f \in \Conv(\RR^n,\RR)$.
\end{lemma}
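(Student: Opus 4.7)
The plan is to establish the two implications separately, the converse being immediate and the forward direction requiring most of the work. For the converse, if \eqref{eq:LemGWDistrMonDarst} holds with each $\mu_x$ non-negative, then $f\leq g$ pointwise implies $\int f\,d\mu_x\leq \int g\,d\mu_x$ for every $x\in\RR^n$, so $\Psi(f)\leq\Psi(g)$ and $\Psi$ is monotone.

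For the forward direction, fix $x\in\RR^n$ and let $\phi\in C_c^\infty(\RR^n)$ be non-negative. The key step is to produce a convex function $f\in\Conv(\RR^n,\RR)$ for which $f+\phi\in\Conv(\RR^n,\RR)$ as well; the choice $f(y)=C\|y\|^2$ works provided $C$ is taken large enough to dominate $\sup_y\|\hess\phi(y)\|$, which is finite since $\phi$ has compact support. Applying \eqref{eq:GWonCinfbyPsi} then yields
\[
\GW(\Psi_x)(\phi)=\Psi(f+\phi)[x]-\Psi(f)[x]\geq 0,
\]
the inequality coming from $f+\phi\geq f$ and the monotonicity of $\Psi$. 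Hence $\GW(\Psi_x)$ is a non-negative distribution and, by the classical representation theorem for non-negative distributions (see e.g.\ \cite{Hoermander2003}), it is given by a non-negative Radon measure $\mu_x$. Since $\GW(\Psi_x)\in\DistribC(\RR^n)$, the measure $\mu_x$ has compact support.

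It remains to verify \eqref{eq:LemGWDistrMonDarst} for all $f\in\Conv(\RR^n,\RR)$. By Theorem~\ref{thm:exGWCompSupp}, $\Psi(f)[x]=\GW(\Psi_x)[f]$ for every $f\in\Conv(\RR^n,\RR)\cap C^\infty(\RR^n)$, and since $\mu_x$ has compact support and $f$ is continuous, the right-hand side equals $\int_{\RR^n}f\,d\mu_x$. To pass to general $f\in\Conv(\RR^n,\RR)$, approximate by the mollifications $f_j:=f\ast\rho_{1/j}$, which lie in $\Conv(\RR^n,\RR)\cap C^\infty(\RR^n)$ and, because $\dom f=\RR^n$, epi-converge to $f$; by Lemma~\ref{lem:epiConvEquiv}\eqref{lem:epiConvEquivUnifComp}, the convergence is uniform on every compact set, hence $\int_{\RR^n}f_j\,d\mu_x\to \int_{\RR^n}f\,d\mu_x$. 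On the other side, continuity of $\Psi$ gives epi-convergence $\Psi(f_j)\to\Psi(f)$, and since $\dom\Psi(f)=\RR^n$ also has no boundary points, another application of Lemma~\ref{lem:epiConvEquiv}\eqref{lem:epiConvEquivUnifComp} yields $\Psi(f_j)[x]\to\Psi(f)[x]$, completing the proof.

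The principal obstacle is not the non-negativity argument itself, which reduces to a single application of \eqref{eq:GWonCinfbyPsi}, but the careful passage from Theorem~\ref{thm:exGWCompSupp}, which only identifies $\Psi(f)[x]$ with $\int f\,d\mu_x$ on the smooth dense subset, to the representation on all of $\Conv(\RR^n,\RR)$. This step depends crucially on the fact that, for functions with full domain $\RR^n$, epi-convergence upgrades to uniform convergence on compacta, which allows both sides of the identity to be controlled simultaneously under mollification.
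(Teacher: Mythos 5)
Your proof is correct and follows essentially the same route as the paper: positivity of $\GW(\Psi_x)$ via \eqref{eq:GWonCinfbyPsi} with a large quadratic, the classical representation by a non-negative measure whose compact support comes from Theorem~\ref{thm:exGWCompSupp}, and extension of the integral formula from smooth convex functions to all of $\Conv(\RR^n,\RR)$ by approximation. The only cosmetic point is that the converse should formally start from the hypothesis that the $\GW(\Psi_x)$ are non-negative compactly supported measures and then derive \eqref{eq:LemGWDistrMonDarst}, but your final approximation paragraph does exactly this without using monotonicity, so nothing is missing.
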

\begin{proof}
 Assume first that $\Psi$ is monotone and let $\phi_1\le \phi_2 \in C_c^\infty(\RR^n)$. Take $f\in \Conv(\RR^n,\RR)$ such that $f+\phi_i$ is convex for $i=1,2$ (a possible choice would be $f(y)=c\|y\|^2$ for $c>0$ large enough; see, e.g., \cite{Knoerr2020a}*{Lem. 5.1}). Then $f+\phi_1\le f+\phi_2$ and, by the monotonicity of $\Psi$ and \eqref{eq:GWonCinfbyPsi},
\begin{align*}
    \GW(\Psi_x)[\phi_1]=\Psi_x(f+\phi_1)-\Psi_x(f)\le \Psi_x(f+\psi_2)-\Psi_x(f)=\GW(\Psi_x)[\phi_2].
\end{align*}
As $\phi_1$ and $\phi_2$ were chosen arbitrarily, $\GW(\Psi_x)$ is a positive distribution and, hence, given by a non-negative measure $\mu_x$. By Theorem~\ref{thm:exGWCompSupp}, $\supp\GW(\Psi_x)$ is compact, so $\mu_x$ is compactly supported as well, and 
\begin{align*}
    \Psi_x(f)=\GW(\Psi_x)[f]=\int_{\RR^n} f d\mu_x,\quad f\in\Conv(\RR^n,\RR)\cap C^\infty(\RR^n).
\end{align*}
Obviously the right-hand side of this equation extends by continuity to $\Conv(\RR^n,\RR)$, so this representation holds for all $f\in\Conv(\RR^n,\RR)$.

If, on the other hand, the Goodey--Weil distributions are given by non-negative measures $\mu_x$, then the previous argument can be repeated to obtain \eqref{eq:LemGWDistrMonDarst}, from which it is clear that $\Psi$ is monotone.
\end{proof}

\noindent
Despite their very simple proofs, the following lemmas turn out to be very useful in proving characterization results. We start with the following observation.

\begin{lemma}\label{lem:AffFctsMapToAff}
	Suppose that $\Psi: \Conv(\RR^n,\RR) \rightarrow \Conv(\RR^n,\RR)$ is additive. Then $\Psi$ maps affine functions to affine functions.
\end{lemma}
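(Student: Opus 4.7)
The plan is a very short algebraic argument based on additivity alone, together with the simple observation that a function which is both convex and concave is affine. Since $\Conv(\RR^n,\RR)$ is closed under pointwise addition, the additivity hypothesis reduces to the unconditional identity $\Psi(g+h)=\Psi(g)+\Psi(h)$ for all $g,h\in\Conv(\RR^n,\RR)$.

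First I would show $\Psi(0)=0$: applying additivity to $0+0=0$ gives $\Psi(0)=\Psi(0)+\Psi(0)$, so as an element of $\Conv(\RR^n,\RR)$ the function $\Psi(0)$ vanishes identically. Next, for any affine $a:\RR^n\to\RR$, both $a$ and $-a$ are finite and convex, so they belong to $\Conv(\RR^n,\RR)$. Additivity applied to $a+(-a)=0$ then yields
\begin{equation*}
\Psi(a)+\Psi(-a)=\Psi(0)=0,
\end{equation*}
so $\Psi(-a)=-\Psi(a)$ pointwise.

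Now $\Psi(a)$ lies in $\Conv(\RR^n,\RR)$, hence is convex, and similarly $\Psi(-a)\in\Conv(\RR^n,\RR)$ is convex. But $\Psi(-a)=-\Psi(a)$, so $-\Psi(a)$ is also convex, i.e.\ $\Psi(a)$ is simultaneously convex and concave. Such a finite function on $\RR^n$ is necessarily affine, which proves the claim.

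There is no real obstacle here; the only thing to check carefully is that the additivity relation may be applied to the pair $(a,-a)$, which is legitimate because both summands lie in $\Conv(\RR^n,\RR)$ and their sum, the zero function, also lies in $\Conv(\RR^n,\RR)$. No continuity, monotonicity, or equivariance is used.
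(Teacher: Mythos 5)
Your argument is correct and is essentially the same as the paper's proof: deduce $\Psi(0)=0$ from additivity, apply additivity to the pair $(a,-a)$ to get $\Psi(-a)=-\Psi(a)$, and conclude that $\Psi(a)$, being both convex and concave, is affine. No issues.
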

\begin{proof}
	As $\Psi$ is additive, the constant zero function is mapped to itself. Now let $g:\RR^n  \to \RR$ be affine. Then $-g$ is affine as well and
	\begin{align*}
	\Psi (g) + \Psi(-g) = \Psi(0) = 0
	\end{align*}
	implies that $\Psi(-g) = -\Psi(g)$. Thus $\Psi(g)$ and $-\Psi(g)$ are convex, so $\Psi(g)$ must be affine.
\end{proof}

For a monotone endomorphism $\Psi$, Lemma~\ref{lem:AffFctsMapToAff} has the following implication for the total mass of the measures $\mu_x$ defining the Goodey--Weil distributions $\GW(\Psi_x)$. Let $1$ denote the constant function $x\mapsto 1$.
\begin{corollary}
	If $\Psi:\Conv(\RR^n,\RR)\rightarrow\Conv(\RR^n,\RR)$ is continuous, additive and monotone with associated measures $(\mu_x)_{x\in \RR^n }$, then $\mu_x(\RR^n)=\Psi(1)[0]$, for all $x\in \RR^n $. In particular,  $\Psi=0$ if and only if $\Psi(1)[0]=0$.
\end{corollary}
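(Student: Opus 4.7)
The plan is to combine Lemma~\ref{lem:GWdistrMonot} with Lemma~\ref{lem:AffFctsMapToAff} applied to the constant function $1$. By Lemma~\ref{lem:GWdistrMonot}, the representation $\Psi(f)[x]=\int_{\RR^n} f\, d\mu_x$ holds for every $f \in \Conv(\RR^n,\RR)$, in particular for $f = 1$. Thus the function $x\mapsto \Psi(1)[x]$ coincides with $x\mapsto \mu_x(\RR^n)$.

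First I would invoke Lemma~\ref{lem:AffFctsMapToAff} to conclude that $\Psi(1)$ is an affine function on $\RR^n$. On the other hand, since each $\mu_x$ is a non-negative measure, the map $x\mapsto \mu_x(\RR^n)$ takes values in $[0,\infty)$. An affine function on $\RR^n$ that is bounded from below must be constant, hence $\mu_x(\RR^n)=\Psi(1)[0]$ for all $x \in \RR^n$, which is the first claim.

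For the second assertion, if $\Psi \equiv 0$ then clearly $\Psi(1)[0]=0$. Conversely, if $\Psi(1)[0]=0$, then by the previous paragraph $\mu_x(\RR^n)=0$ for every $x\in\RR^n$, and since each $\mu_x$ is a non-negative Borel measure, this forces $\mu_x=0$. The integral representation \eqref{eq:LemGWDistrMonDarst} then yields $\Psi(f)[x]=0$ for every $f \in \Conv(\RR^n,\RR)$ and every $x\in\RR^n$, that is, $\Psi \equiv 0$.

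There is no real obstacle here; the two lemmas already do all the work, and the only subtlety is noting that an affine function with non-negative values on all of $\RR^n$ must be constant (its linear part must vanish, otherwise it would attain negative values in some direction).
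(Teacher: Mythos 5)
Your proof is correct and follows essentially the same route as the paper: both rest on Lemma~\ref{lem:AffFctsMapToAff} (so $\Psi(1)$ is affine) together with the measure representation of Lemma~\ref{lem:GWdistrMonot}, and both deduce constancy of $\Psi(1)$ from non-negativity (the paper via $\Psi(1)\ge\Psi(0)=0$, you via $\mu_x(\RR^n)\ge 0$, which is the same fact in disguise). Your final step, reading $\Psi\equiv 0$ off the integral representation \eqref{eq:LemGWDistrMonDarst} directly instead of citing the uniqueness of the Goodey--Weil distributions, is a harmless and slightly more economical variant.
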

\begin{proof}
	By Lemma \ref{lem:AffFctsMapToAff}, $\Psi(1)$ is an affine function. Because $\Psi(1)\ge \Psi(0)=0$, $\Psi(1)$ has to be constant and, hence,
	\begin{align*}
		\mu_x(\RR^n)=\int_{\RR^n}1d\mu_x=\Psi(1)[x]=\Psi(1)[0],\quad\text{for all } x\in \RR^n.
	\end{align*}%
	By the uniqueness of $\GW(\Psi_x)$ (by Theorem~\ref{thm:exGWCompSupp}), $\Psi$ is the zero map if and only if $\GW(\Psi_x)=\mu_x=0$ for all $x\in \RR^n $. As $\mu_x$ is a non-negative measure, this is the case if and only if $\mu_x(\RR^n)=0$ for all $x\in \RR^n $, that is, if and only if $\Psi(1)[0]=0$.
\end{proof}
\noindent
We now turn to endomorphisms $\Psi:\ConvO(\RR^n)\rightarrow\ConvO(\RR^n)$.
\begin{lemma}\label{lem:restrictRadEquiv}
	Suppose that $\Psi:\ConvO(\RR^n) \rightarrow \ConvO(\RR^n)$ is continuous and  additive. If $\Psi(0)$ is finite everywhere, then  $\Psi$ restricts to an endomorphism of $\Conv(\RR^n, \RR)$. This holds in particular if $\Psi$ is radially equivariant.
\end{lemma}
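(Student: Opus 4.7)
The plan is to combine two computations of the same epi-limit, using positive homogeneity and continuity of $\Psi$, to force $\dom \Psi(f) = \RR^n$ for every $f \in \Conv(\RR^n,\RR)$. Two preliminary reductions are needed. First, applying additivity with $g = h = 0 \in \ConvO(\RR^n)$ yields $\Psi(0) = 2\Psi(0)$, so $\Psi(0)(x) = 0$ at every $x$ where $\Psi(0)(x) < \infty$; the hypothesis that $\Psi(0)$ is finite everywhere then forces $\Psi(0) \equiv 0$. Second, for any $f \in \Conv(\RR^n,\RR)$ and $t,s \geq 0$, additivity yields the Cauchy equation $\Psi((t+s)f) = \Psi(tf) + \Psi(sf)$; combined with $\Psi(0)=0$ and continuity of $\Psi$ (and of $t \mapsto tf$), standard arguments give the positive homogeneity $\Psi(tf) = t\Psi(f)$ for all $t \geq 0$.

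For the main step, fix $f \in \Conv(\RR^n,\RR)$ and set $D := \dom \Psi(f)$, which is convex with $0 \in \interior D$. Since $tf \to 0$ epi-converges as $t \to 0^+$, continuity of $\Psi$ gives $\Psi(tf) = t\Psi(f) \to \Psi(0) = 0$ epi. I would then compute this epi-limit directly and show it equals $\mathbbm{1}_{\overline D}^\infty$. For $y \in \overline D \setminus D$ fix $y_0 \in \interior D$; along the open segment $\{(1-s)y_0 + sy : s \in (0,1)\}$, which lies in $\interior D$ by convexity, $\Psi(f)$ is finite and continuous with $\Psi(f)((1-s)y_0 + sy) \to \infty$ as $s \to 1^-$ (by lower semi-continuity, since $\Psi(f)(y)=\infty$). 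For each small $t > 0$ choose $s_t \in (0,1)$ with $\Psi(f)((1-s_t)y_0 + s_ty) = 1/\sqrt t$; this forces $s_t \to 1$ and gives $t\Psi(f)((1-s_t)y_0 + s_ty) = \sqrt t \to 0$, producing the required recovery sequence. The matching liminf inequality uses the existence of an affine minorant $\ell$ of $\Psi(f)$, which bounds $\Psi(f) \geq -M$ on any bounded set and hence $t\Psi(f)(y_t) \geq -tM \to 0$ for any $y_t \to y$. For $y \notin \overline D$, $\Psi(f) \equiv +\infty$ on a neighborhood of $y$, so both conditions of epi-convergence assign the value $+\infty$ there.

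Uniqueness of epi-limits then forces $\mathbbm{1}_{\overline D}^\infty \equiv 0$, i.e., $\overline D = \RR^n$. Since $\interior D$ is an open convex subset of $\RR^n$ with closure $\RR^n$, a standard Hahn--Banach separation argument (any proper open convex subset of $\RR^n$ lies in an open half-space, hence cannot be dense) shows $\interior D = \RR^n$, and therefore $D = \RR^n$, so $\Psi(f) \in \Conv(\RR^n,\RR)$. The ``in particular'' assertion is immediate: if $\Psi$ is radially equivariant then $\Psi(0)(tx) = \Psi(0 \circ (t\,\cdot))(x) = \Psi(0)(x)$ for every $t > 0$ and $x \in \RR^n$; letting $t \to 0^+$ and invoking continuity of $\Psi(0)$ at $0 \in \interior \dom \Psi(0)$ gives $\Psi(0)(x) = \Psi(0)(0) \in \RR$, so $\Psi(0)$ is a finite constant and the main claim applies. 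The key technical obstacle is the direct epi-limit computation, in particular the construction of the recovery sequence $s_t$; the Hahn--Banach step and the radial case are routine.
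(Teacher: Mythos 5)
Your argument is correct, and its skeleton is the paper's: you first use additivity to get $\Psi(0)=2\Psi(0)$, so finiteness forces $\Psi(0)=0$; you then scale, $\Psi(\tfrac1j f)=\tfrac1j\Psi(f)\to\Psi(0)=0$ by continuity, and extract finiteness of $\Psi(f)$; and your treatment of the radially equivariant case (scaling invariance of $\Psi(0)$ plus finiteness of $\Psi(0)$ near $0\in\interior\dom\Psi(0)$) is essentially the paper's argument. Where you diverge is in how finiteness is extracted from the epi-convergence $t\Psi(f)\to 0$: the paper simply invokes the Rockafellar--Wets characterization (Lemma~\ref{lem:epiConvEquiv}), noting that $\dom 0=\RR^n$ has no boundary, so epi-convergence to $0$ gives (locally uniform, hence pointwise) convergence everywhere, which is impossible at a point where $\Psi(f)=+\infty$; you instead identify the epi-limit of $t\Psi(f)$ by hand as $\mathbbm{1}_{\overline{D}}^\infty$ with $D=\dom\Psi(f)$, use uniqueness of epi-limits to get $\overline{D}=\RR^n$, and finish with a separation argument. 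This is a valid, self-contained alternative, but it is more work than needed: what you flag as the key obstacle, the recovery sequence at points of $\overline{D}\setminus D$, is dispensable, since for the contradiction it suffices that at any $y\notin\overline{D}$ every sequence $y_t\to y$ eventually has $t\Psi(f)(y_t)=+\infty$, so condition~\eqref{defEpiConvConvSeq} of epi-convergence to $0$ fails there, giving $\overline D=\RR^n$ directly (and then $\interior D=\interior\overline{D}=\RR^n$ for the convex set $D$). Also, your continuity-based derivation of full positive homogeneity is unnecessary: rational homogeneity $\Psi(\tfrac1j f)=\tfrac1j\Psi(f)$, which is purely algebraic from additivity, already suffices if you run the argument along $t=1/j$.
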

\begin{proof}
	Because $\Psi$ is additive, $\Psi(0)=\Psi(0)+\Psi(0)$, which therefore can only take the values $0$ or $+\infty$. As $\Psi(0)$ is convex, it must be the convex indicator function of a convex set $A\subset \RR^n$.
	
	Next, assume that $\Psi(0)$ is finite (and, hence, $0$) and let $f\in\Conv(\RR^n,\RR)$ be given. Then $\frac{1}{j}f$ epi-converges to $0$ for $j\rightarrow\infty$ by Lemma~\ref{lem:epiConvEquiv}\eqref{lem:epiConvEquivPWonDense}. The continuity of $\Psi$ thus implies that $\Psi(\frac{1}{j}f)=\frac{1}{j}\Psi(f)$ epi-converges $\Psi(0)=0$. Again, Lemma \ref{lem:epiConvEquiv}\eqref{lem:epiConvEquivPWonDense} implies that this convergence is pointwise on the interior of the domain of the zero function, so $\frac{1}{j}\Psi(f)[x]$ converges to zero for all $x\in \RR^n$. In particular, $\Psi(f)[x]$ must be finite for all $x\in \RR^n $, that is, $\Psi(f)\in\Conv(\RR^n,\RR)$.

	Finally, assume that $\Psi$ is radially equivariant. As $\Psi(0)\in\ConvO(\RR^n)$, $0$ is contained in the interior of $A = \dom \Psi(0)$. Given $x\in \RR^n $, we can thus choose $r>0$ such that $rx\in A$. Because $\Psi$ is radially equivariant, this implies
	\begin{align*}
	0=\mathbbm{1}^\infty_A(rx)=\Psi(0)[rx]=\Psi(0)[x].
	\end{align*}
	As this holds for all $x\in \RR^n $, $\Psi(0)=0$.
\end{proof}

\subsection{Examples}
\label{subsection:examples}
In this section, we give additional examples of endomorphisms on convex functions that do not appear in the characterization results of Theorems~\ref{mthm:CharGLEquiv} to \ref{mthm:Char1Dim}. In particular, we want to emphasize that in the two considered equivariant cases all endomorphisms on $\Conv(\RR^n,\RR)$ extend by continuity to $\ConvO(\RR^n)$. Moreover, the equivariance implies that endomorphisms of $\ConvO(\RR^n)$ restrict to endomorphisms of $\Conv(\RR^n,\RR)$, that is, every such endomorphism maps finite functions to finite functions. As we will see, both statements do not hold in general.

\medskip

In order to give the first example, we make use of \emph{Monge--Amp\`ere measures}, coming from solutions of the Monge--Amp\`ere equation (see, e.g., \cites{Figalli2017, Gutierrez2016, Trudinger2008, Aleksandrov1958}). Monge--Amp\`ere measures have been used recently in the theory of valuations on convex functions \cites{Alesker2019, Colesanti2017b,Colesanti2019b, Colesanti2021b}, where the related notion of \emph{Hessian measure} was used (see, e.g., \cite{Colesanti2005} and the references therein). As we will not need the exact definition, we will just state the properties we use and refer to \cites{Figalli2017, Gutierrez2016, Colesanti2019b, Colesanti2021b} for more details. 

The following theorem is due to Aleksandrov \cite{Aleksandrov1958} (see also \cite{Figalli2017}*{Thm.~2.3, Prop.~2.6 and Thm.~A.31}), where the last two points are direct consequences of the first two. Recall that a Radon measure on $\RR^n$ is a Borel measure that is finite on compact sets, and let $D^2 f$ denote the Hessian matrix of $f \in C^2(\RR^n)$.
\begin{theorem}\label{thm:defMongAmp}
 For every $f \in \Conv(\RR^n, \RR)$ there exists a Radon measure $\MongAmp{f}{\cdot}$ on $\RR^n$, the \emph{Monge--Amp\`ere measure} of $f$, such that the following holds:
 \begin{enumerate}
  \item If $f \in \Conv(\RR^n, \RR) \cap C^2(\RR^n)$, then $\MongAmp{f}{\cdot}$ is absolutely continuous with respect to the Lebesgue measure on $\RR^n$ and 
  \begin{align*}
   d\MongAmp{f}{x} = \det (D^2 f(x)) dx, \quad x \in \RR^n.
  \end{align*}
  \item If $f_j \in \Conv(\RR^n, \RR)$ epi-converges to $f \in \Conv(\RR^n, \RR)$, then $\MongAmp{f_j}{\cdot}$ converges weakly to $\MongAmp{f}{\cdot}$.
  \item $\MongAmp{f}{\cdot}$ is a non-negative measure.
  \item If $n=1$ and $g \in \Conv(\RR^n,\RR)$, then $\MongAmp{f+g}{\cdot} = \MongAmp{f}{\cdot} + \MongAmp{g}{\cdot}$.
 \end{enumerate}

\end{theorem}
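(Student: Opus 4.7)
The plan is to construct $\MongAmp{f}{\cdot}$ via the classical Aleksandrov approach using the subdifferential map. Given $f \in \Conv(\RR^n,\RR)$, define, for Borel $E \subseteq \RR^n$,
\begin{align*}
    \MongAmp{f}{E} := \mathcal{L}^n\bigl(\partial f(E)\bigr),
\end{align*}
where $\partial f(E) = \bigcup_{x \in E} \partial f(x)$ denotes the image of $E$ under the subdifferential and $\mathcal{L}^n$ is the Lebesgue measure on $\RR^n$. The first step is to check that this is a well-defined Radon measure: one needs Lebesgue-measurability of $\partial f(E)$ for Borel $E$ (which follows from the upper semi-continuity of $\partial f$ as a set-valued map, so that $\partial f(K)$ is compact for compact $K$) and countable additivity. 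The additivity reduces to showing that for disjoint Borel sets $E_1, E_2$, the overlap $\partial f(E_1) \cap \partial f(E_2)$ has Lebesgue measure zero; this is a consequence of the fact that the set of points $y \in \RR^n$ lying in the subdifferential of two distinct points is contained in the singular set of the Legendre transform $f^*$, which is known to be Lebesgue-negligible by Aleksandrov's differentiability theorem. Non-negativity (property (3)) is then built into the construction.

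For property (1), if $f \in C^2(\RR^n)$ then $\partial f(x) = \{\nabla f(x)\}$, so $\partial f$ coincides with the classical gradient map. Applying the area formula to $\nabla f$, together with the non-negativity of $\det D^2 f$ guaranteed by convexity, yields
\begin{align*}
    \mathcal{L}^n(\nabla f(E)) = \int_E \det D^2 f(x)\, dx,
\end{align*}
which is the desired absolute continuity and density formula. For property (2), given an epi-convergent sequence $f_j \to f$ in $\Conv(\RR^n,\RR)$, Lemma~\ref{lem:epiConvEquiv} yields local uniform convergence, and a standard fact on convergence of convex functions implies that the subdifferentials converge as graphs: any accumulation point of a sequence $y_j \in \partial f_j(x_j)$ with $x_j \to x$ lies in $\partial f(x)$, while any $y \in \partial f(x)$ arises as such a limit. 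Combined with Lemma~\ref{lem:unifLowerBound} to obtain locally uniform bounds on the subgradients, this gives $\MongAmp{f_j}{K} \to \MongAmp{f}{K}$ for every compact set whose boundary is $\MongAmp{f}{\cdot}$-null, which implies weak convergence of the measures.

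Property (4) is special to $n = 1$. There, the subdifferential has the explicit form $\partial f(x) = [f'_-(x), f'_+(x)]$, and for an interval $[a,b]$ one has
\begin{align*}
    \MongAmp{f}{[a,b]} = f'_+(b) - f'_-(a).
\end{align*}
Since left and right derivatives are additive in $f$ (they commute with pointwise sums of convex functions), one reads off $\MongAmp{f+g}{[a,b]} = \MongAmp{f}{[a,b]} + \MongAmp{g}{[a,b]}$, and the claim follows by the uniqueness of Borel measures determined by their values on intervals. The main obstacle I expect lies in property (2): the weak convergence under epi-convergence requires the transfer from graph convergence of subdifferentials to measure convergence of their images, which is delicate near boundary points of $\partial f(E)$ and requires a careful argument ruling out mass escaping to infinity, relying on the local boundedness provided by Proposition~\ref{prop:uniformLipschitz}.
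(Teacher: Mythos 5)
Your construction is correct, but it takes a genuinely different route from the paper, which does not prove this theorem at all: properties (1) and (2) are quoted from Aleksandrov and from Figalli's monograph, and the only argument the paper supplies is the remark that (3) and (4) are direct consequences of (1) and (2) — non-negativity because every $f$ is an epi-limit of smooth convex functions whose densities $\det D^2 f_j$ are non-negative and non-negativity survives weak limits, and the one-dimensional additivity because $(f+g)''=f''+g''$ for smooth functions, the general case again following by approximation and the weak continuity (2). You instead rebuild the measure from scratch via $\MongAmp{f}{E}=\mathcal{L}^n(\partial f(E))$ and prove all four items from that definition, essentially reproducing the proofs in the cited references. What your route buys is self-containedness and, for (3) and (4), arguments needing no approximation at all (non-negativity is built in, and the explicit formula $\MongAmp{f}{[a,b]}=f'_+(b)-f'_-(a)$ makes the $n=1$ additivity immediate); what it costs is that you must supply the two genuinely nontrivial ingredients the paper outsources, namely the measure-theoretic construction and the weak continuity (2), which you only sketch. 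If you carry it out, tighten three points: the null-overlap lemma only needs a.e.\ first-order differentiability of the conjugate (Aleksandrov's second-order differentiability theorem is not required); in (1) the area formula yields equality rather than just $\mathcal{L}^n(\nabla f(E))\le\int_E\det D^2f\,dx$ only after observing that $\nabla f$ is injective on $\{\det D^2 f>0\}$ (if $\nabla f(x_1)=\nabla f(x_2)$ with $x_1\ne x_2$ then $f$ is affine on $[x_1,x_2]$, forcing a null direction of $D^2f$ at the endpoints) and that the degenerate set contributes zero to both sides; and for (2) the standard and cleaner scheme is to prove $\limsup_j \MongAmp{f_j}{K}\le\MongAmp{f}{K}$ for compact $K$ and $\liminf_j \MongAmp{f_j}{U}\ge\MongAmp{f}{U}$ for bounded open $U$, which together give weak convergence without having to identify continuity sets.
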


\noindent
The Monge--Amp\`ere measure will also be used in Section~\ref{sec:proof1Dim} in the proof of Theorem~\ref{mthm:Char1Dim}.

\medskip

\noindent
We now have all prerequisites to give the first example of an endomorphism on $\Conv(\RR, \RR)$ that does not extend to $\ConvO(\RR)$.

\begin{example}\label{ex:1dMongAmpKonst}
 Let $g \in \Conv(\RR,\RR)$ and $\zeta \in C_c(\RR)$ non-negative. Then, by Theorem~\ref{thm:defMongAmp}, the map $\Psi$ defined by
 \begin{align*}
  \Psi(f)[x] = g(x) \cdot \int_{\RR} \zeta(|y|) d\MongAmp{f}{y}, \quad x \in \RR,%
 \end{align*}
 is continuous, $\Psi(f) \in \Conv(\RR,\RR)$ as $\MongAmp{f}{\cdot}$ is non-negative and finite on $\supp \zeta$ for every $f \in \Conv(\RR,\RR)$, and $\Psi$ is additive, since $\MongAmp{f}{\cdot}$ is additive. $\Psi$ does not possess a continuous extension to $\ConvO(\RR)$. This can be seen by approximating convex indicator functions, e.g., with polynomials of degree four or higher, for which the second derivatives diverge. Moreover, the example can be extended to arbitrary dimensions by $\hat \Psi(f)[\cdot] = \Psi(s \mapsto f(sy))[\mathrm{pr}_{\langle y\rangle}(\cdot)]$ for some fixed $y \in \RR^n$, where $\mathrm{pr}_{\langle y\rangle}$ denotes the orthogonal projection onto the linear span $\langle y\rangle$ of $y$.
\end{example}

\medskip

Recall that Lemma \ref{lem:restrictRadEquiv} provides a very simple description of the endomorphisms of $\ConvO(\RR^n)$ that map finite-valued functions to finite-valued functions. Of course, it is very easy to construct maps that do not have this property: We can always just add the convex indicator function of a ball around the origin to any endomorphism. By Lemma~\ref{lem:epiConvEquiv}, the new map is still continuous as a map from $\Conv(\RR^n,\RR)$ to $\ConvO(\RR^n)$, but obviously its image does not contain any finite-valued function. Let us construct an example that is slightly less artificial:

\begin{example}\label{ex:IntPhiT}
 Let $\varphi \in \Conv(\RR,\RR)\cap C^2(\RR)$ be even and non-negative and define
 \begin{align}\label{eq:exIntPhiT}
  \Psi_\varphi(f)[t] = \int_{-\varphi(t)}^{\varphi(t)} (f(s)-f(0)) ds, \quad f \in \Conv(\RR,\RR).
 \end{align}
 Then the second derivative of $\Psi_\varphi(f)$, $f \in \Conv(\RR,\RR)\cap C^2(\RR)$, is given for $t \in \RR$ by
 \begin{align*}
  \Psi_\varphi(f)''[t] = \varphi''(t)\left( f(\varphi(t)) + f(-\varphi(t)) - 2f(0)\right)
               +(\varphi'(t))^2(f'(\varphi(t)) - f'(-\varphi(t))).
 \end{align*}
 Here, the first term is non-negative by the convexity of $\varphi$ and $f$, and the second term is non-negative since $f'$ is increasing, again by convexity. Hence, $\Psi_\varphi(f)$ is convex for every $f \in \Conv(\RR,\RR)\cap C^2(\RR)$ and thus for every $f \in \Conv(\RR,\RR)$ by approximation. Moreover, note that $\Psi_\varphi$ is obviously continuous and additive, so $\Psi_\varphi$ is an endomorphism of $\Conv(\RR, \RR)$, and that, by approximation, the condition $\varphi \in C^2(\RR)$ is actually not necessary in \eqref{eq:exIntPhiT}.
 
 \medskip
 
 The map $\Psi_\varphi$ can be extended to $\ConvO(\RR)$ if and only if $\varphi(0) = 0$: Indeed, \eqref{eq:exIntPhiT} can be used as definition for $f \in \ConvO(\RR)$, since every such $f$ is bounded from below on the compact subsets $[-\varphi(t),\varphi(t)]$, $t \in \RR$, by semi-continuity. Moreover, we can choose a sequence $f_j \in \Conv(\RR,\RR)$ that epi-converges monotonously to $f$, and, by monotone convergence and Lemma~\ref{lem:epiConvEquiv} (using that $0 \in \interior \dom f$),
 \begin{align*}
  \Psi_\varphi(f_j)[t] = \int_{-\varphi(t)}^{\varphi(t)} f_j(s) ds - 2 f_j(0) \varphi(t) \rightarrow \int_{-\varphi(t)}^{\varphi(t)} f(s)ds -2 f(0) \varphi(t) = \Psi_\varphi(f)[t],
 \end{align*}
 for every $t \in \RR$, as $j\to \infty$. The resulting function $\Psi_\varphi(f)$ is therefore convex as pointwise limit of convex function and it is finite in a neighborhood of $0$ if and only if $\varphi(t) \to 0$ as $t\to 0$, which is equivalent to $\varphi(0) = 0$ by continuity. The proof that $\Psi_\varphi$ is continuous on $\ConvO(\RR)$, finally, is very similar to the according part of the proof of Theorem~\ref{mthm:CharGLEquiv} and will therefore be omitted at this point.

 \medskip
 
 Another way to extend this example is to consider more generally $\varphi \in \ConvO(\RR)$ even and non-negative. The naive idea to just extend \eqref{eq:exIntPhiT}, however, leads to a map that is not continuous anymore. Indeed, affine functions would be mapped to the constant zero function, while every other function would be infinite on the complement of $\overline{\dom \varphi}$. For this reason, we use the following definition for $\varphi \in \ConvO(\RR)$, $\varphi$ even and non-negative, and $f \in \Conv(\RR,\RR)$:
 \begin{align*}
  \Psi_\varphi(f)[t] = \begin{cases}
                        \int_{-\varphi(t)}^{\varphi(t)} (f(s)-f(0)) ds, & t \in \interior \dom \varphi,\\
                        \liminf_{t' \to t, t' \in \interior \dom \varphi} \Psi_\varphi(f)[t'], & t \in \partial \dom \varphi,\\
                        \infty, & t \in \RR \setminus \overline{\dom \varphi}.
                       \end{cases}
 \end{align*}
 Note that $\Psi_\varphi(f)$ is by definition lower semi-continuous and finite in the neighborhood $\interior \dom \varphi$ of the origin. To show that $\Psi_\varphi(f)$ is convex, first let $t \in \interior \dom \varphi$. Since $\varphi$ is convex and finite in a neighborhood of $[-t,t]$, we can find an affine function $g$ such that its graph is a supporting hyperplane of $\epi \varphi$ at $t$. We may therefore define $\widetilde\varphi\in \Conv(\RR^n,\RR)$ by requiring $\widetilde{\varphi}$ to be equal to $\varphi$ on $[-t,t]$, equal to $g(s)$ for $s>t$ and equal to $g(-s)$ for $s<-t$. In particular, the first part of the example implies that $\Psi_{\widetilde \varphi}(f)$ is convex and as $\Psi_\varphi(f)[t'] = \Psi_{\widetilde\varphi}(f)[t']$ for $|t'|<|t|$, $\Psi_\varphi(f)$ is convex on $\interior \dom \varphi$. Noting, finally, that since any linear function $h$ is odd,
  \begin{align*}%
  \int_{-\varphi(t)}^{\varphi(t)} f(s) - f(0) ds = \int_{-\varphi(t)}^{\varphi(t)} f(s) - f(0) - h(s) ds, \quad t \in \interior \dom \varphi,
 \end{align*}
 so we can assume that the integrand is non-negative (take, e.g., the graph of $h$ to be a supporting hyperplane of the epigraph of $f(s)-f(0)$ at $0$). Consequently, monotone convergence implies that $\Psi_\varphi(f)[t] = \lim_{t' \to t} \Psi_\varphi(f)[t']$, where we take $t' \in \interior \dom \varphi$ and $t \in \partial\dom\varphi$, and therefore $\Psi_\varphi(f)$ is convex on all of $\RR$, that is, $\Psi_\varphi(f) \in \ConvO(\RR)$.
 
 The continuity of the map $\Psi_\varphi: \Conv(\RR,\RR)\to\ConvO(\RR)$ follows from Lemma~\ref{lem:epiConvEquiv} using that every $\Psi_\varphi(f)$ is constant on the complement of $\overline{\dom\varphi}$ and that, for $t \in \interior \dom \varphi$, $\Psi_\varphi(f)[t]$ depends only on the values of $f$ on the compact set $[-\varphi(t),\varphi(t)]$.

\end{example}

\begin{example}
 The previous Example~\ref{ex:IntPhiT} can be generalized to arbitrary dimensions by defining for $y \in \RR^n  \setminus\{0\}$ and $\varphi \in \ConvO(\RR)$
 \begin{align*}
  \widetilde\Psi_{\varphi,y}(f)[x] = \Psi_\varphi(s \mapsto f(sy))[\mathrm{pr}_{\langle y\rangle}(x)], \quad f \in \Conv(\RR^n,\RR), x \in \RR^n ,
 \end{align*}
 where $\mathrm{pr}_{\langle y\rangle}$ denotes the orthogonal projection onto $\langle y\rangle$. Moreover, taking means over $y \in \RR^n  \setminus\{0\}$ and a family of $\varphi\in\ConvO(\RR)$ yields further examples that are not ``1-dimensional'' anymore.
\end{example}


\medskip

Examples~\ref{ex:1dMongAmpKonst} and \ref{ex:IntPhiT}, as well as the results of Theorems~\ref{mthm:CharGLEquiv} and \ref{mthm:CharMonRadEquiv} lead to the following (in our opinion very interesting) question: Is there a general criterion when a continuous and additive map $\Psi:\Conv(\RR^n,\RR)\rightarrow\ConvO(\RR)$ can be extended to $\ConvO(\RR^n)$? For dually epi-translation invariant valuations on convex functions, which are closely related to additive maps, some results in this direction were obtained in \cite{Knoerr2020a} in terms of the supports of Goodey-Weil distributions. A similar approach can be used to obtain conditions under which an endomorphism can be extended ``pointwise", but the last part of Example \ref{ex:IntPhiT} suggests that there may be additional obstructions imposed by continuity.

\medskip

In some sense, an answer to this question would yield a structural relation between the space $\ConvO(\RR^n)$ and its dense subspace $\Conv(\RR^n, \RR)$, which would be interesting on its own. Moreover, it could be helpful in the characterization of endomorphisms with different equivariance properties, as our proofs rely on the fact that we can restrict to $\Conv(\RR^n,\RR)$ and work with finite convex functions.%

\bigskip

\section{Proof of Theorem~\ref{mthm:CharGLEquiv} and Theorem~\ref{mcor:CharGLEquivWholeConv}}

\noindent
Next, we prove Theorems~\ref{mthm:CharGLEquiv} and \ref{mcor:CharGLEquivWholeConv}, starting with the ``if''-part of Theorem~\ref{mthm:CharGLEquiv}.

\begin{theorem}\label{thm:GLequivCandidate}
 Suppose that $\nu\in\MeasC^+(\RR)$ with $\int_{\RR^\times}|s|^{-1} d\nu(s) < \infty$ and $c \in \RR$.
 
 Then there exists a unique continuous, additive and $\GL(n)$-equivariant map $\Psi: \ConvO(\RR^n) \rightarrow \ConvO(\RR^n)$ satisfying
 \begin{align}\label{eq:DefCandidateGLEquiv}
  \Psi(f)[x] = cf(0) + \int_{\RR^\times}\!\!\! \frac{f(sx)-f(0)}{|s|^2}\, d\nu(s), \quad x \in \RR^n ,
 \end{align}
 for every $f \in \Conv(\RR^n,\RR)$.
\end{theorem}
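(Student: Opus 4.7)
The plan is to define $\Psi$ directly on all of $\ConvO(\RR^n)$ by the formula \eqref{eq:DefCandidateGLEquiv}, then verify in turn that $\Psi$ takes values in $\ConvO(\RR^n)$, is continuous, additive, and $\GL(n)$-equivariant. Uniqueness will then follow from the density of $\Conv(\RR^n,\RR)$ in $\ConvO(\RR^n)$ combined with continuity. Throughout, let $R > 0$ be chosen so that $\supp\nu \subseteq [-R,R]$. The summand $cf(0)$ is constant in $x$ and continuous in $f$ (the evaluation $f \mapsto f(0)$ is continuous on $\ConvO(\RR^n)$ by Lemma~\ref{lem:epiConvEquiv}), so it contributes trivially to each argument below and I will focus on the integral term.

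For well-definedness, fix $f \in \ConvO(\RR^n)$. Since $0 \in \interior\dom f$, the subdifferential $\partial f(0)$ is non-empty, and for any $v \in \partial f(0)$ convexity yields
\begin{align*}
 \frac{f(sx) - f(0)}{|s|^2} \geq \frac{\sgn(s)\,\langle v, x\rangle}{|s|},
\end{align*}
which is $\nu$-integrable by the hypothesis $\int_{\RR^\times} |s|^{-1} d\nu(s) < \infty$; thus $\Psi(f)[x] \in (-\infty, +\infty]$ is well-defined. Convexity of $\Psi(f)$ is immediate since for each fixed $s \neq 0$ the map $x \mapsto \tfrac{f(sx)-f(0)}{|s|^2}$ is convex, and integration against the non-negative measure $\nu$ preserves convexity. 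Lower semi-continuity follows from Fatou's lemma together with the uniform affine lower bound above on any compact set of $x$. For finiteness near the origin, choose $\rho > 0$ with $\overline{B_\rho(0)} \subseteq \dom f$, so that $f$ is bounded on this ball; then for $\|x\| < \rho/R$ we have $\|sx\| < \rho$ for every $s \in \supp\nu$, the integrand is uniformly bounded in $s$, and the integral is finite. Hence $\Psi(f) \in \ConvO(\RR^n)$.

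For continuity, suppose $f_j$ epi-converges to $f$ in $\ConvO(\RR^n)$. Lemma~\ref{lem:epiConvEquiv} reduces the task to pointwise convergence $\Psi(f_j)[x] \to \Psi(f)[x]$ on a dense subset of $\interior\dom\Psi(f)$. For $x$ in a small neighborhood of $0$, the segment $K = \{sx : |s| \leq R\}$ lies inside $\interior\dom f$, so Lemma~\ref{lem:epiConvEquiv}\eqref{lem:epiConvEquivUnifComp} gives $f_j \to f$ uniformly on a compact neighborhood of $K$, Lemma~\ref{lem:unifLowerBound} supplies a common affine lower bound on that neighborhood, and Proposition~\ref{prop:uniformLipschitz} then furnishes a Lipschitz constant $L$ valid for $f$ and, eventually, all $f_j$. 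This provides the $\nu$-integrable dominating function $|f_j(sx) - f_j(0)|/|s|^2 \leq L\|x\|/|s|$, and dominated convergence gives the desired pointwise limit.

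Additivity is immediate from linearity of the integral, while $\GL(n)$-equivariance follows by direct substitution, using $(\eta\cdot f)(sx) = f(s\eta^{-1}x)$ and $(\eta\cdot f)(0) = f(0)$, so that $\Psi(\eta\cdot f)[x] = \Psi(f)[\eta^{-1}x] = (\eta\cdot\Psi(f))[x]$. The main technical obstacle I anticipate is precisely the continuity step near $s = 0$: the relevant local Lipschitz estimate depends a priori on the individual functions $f_j$, so the crux is to extract a common local Lipschitz constant for the entire sequence. This is exactly where the combination of the uniform lower bound of Lemma~\ref{lem:unifLowerBound} with the Lipschitz estimate of Proposition~\ref{prop:uniformLipschitz} on a fixed compact neighborhood of $K$ does the work, and it is the step that matches, via $L\|x\|/|s|$, the $\nu$-integrability assumption $\int_{\RR^\times}|s|^{-1}d\nu(s) < \infty$ in a non-trivial way.
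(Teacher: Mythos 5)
The central gap is in your continuity step. Lemma~\ref{lem:epiConvEquiv}(\ref{lem:epiConvEquivPWonDense}) requires pointwise convergence on a set that is dense in all of $\RR^n$, not merely dense in $\interior\dom\Psi(f)$: convergence on (a neighborhood of the origin inside) $\dom\Psi(f)$ cannot identify the epi-limit. For instance, $g_j(x)=\max(0,\|x\|-1)$ converges to $\mathbbm{1}^\infty_{\overline{B_1(0)}}$ pointwise on the interior of its domain, yet epi-converges to $\max(0,\|x\|-1)$. Since $f\in\ConvO(\RR^n)$ may have a bounded domain, $\Psi(f)$ is $+\infty$ outside $\overline{D_f}$, where $D_f=\{x:[a,b]\cdot x\subseteq\interior\dom f\}$ with $a=\inf\supp\nu$, $b=\sup\supp\nu$, so you must additionally prove that $\Psi(f_j)[x]\to+\infty$ for every such $x$. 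This divergence is the genuinely delicate part of the proof and is absent from your proposal: in the paper it is handled by splitting $\supp\nu$ into a small interval around $0$ (controlled by a uniform Lipschitz bound), a subset $B$ near $a$ or $b$ of positive $\nu$-measure along which $f\equiv+\infty$ on the relevant segments (so epi-convergence forces $f_j\geq k$ there for $j$ large), and the remainder (controlled by the uniform affine lower bound of Lemma~\ref{lem:unifLowerBound}), with the resulting lower bound at $\lambda x$ transferred to $x$ by convexity as $\lambda\to 1$. Your dominated-convergence argument on $D_f$ (which in fact works for every $x\in D_f$, not only near the origin) is sound, but without the divergence case the continuity of $\Psi$ on $\ConvO(\RR^n)$ is not established.

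Two smaller points. In the well-definedness step, the justification ``$\overline{B_\rho(0)}\subseteq\dom f$, so $f$ is bounded there, hence the integrand is uniformly bounded in $s$'' is incorrect: boundedness of $f$ only yields the bound $2\sup|f|\,/|s|^2$, which need not be $\nu$-integrable when $\nu$ charges every neighborhood of $0$ (e.g.\ $d\nu=|s|\,ds$ on $[-1,1]$ satisfies $\int_{\RR^\times}|s|^{-1}d\nu<\infty$ but $\int_{\RR^\times}|s|^{-2}d\nu=\infty$); moreover one should take $\overline{B_\rho(0)}\subseteq\interior\dom f$, since an lsc convex function need not be bounded on a compact subset of $\dom f$. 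The correct estimate is the local Lipschitz bound $|f(sx)-f(0)|/|s|^2\leq L\|x\|/|s|$ from Proposition~\ref{prop:uniformLipschitz}, which you invoke later anyway and which is exactly where the hypothesis $\int_{\RR^\times}|s|^{-1}d\nu<\infty$ is used. On the positive side, defining $\Psi(f)$ on all of $\ConvO(\RR^n)$ directly by the integral and deducing convexity (integral of convex integrands admitting a $\nu$-integrable affine minorant) and lower semi-continuity (Fatou with that minorant) is a legitimate and arguably cleaner alternative to the paper's three-case definition with a $\liminf$ on $\partial D_f$; but this does not compensate for the missing divergence argument.
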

\begin{proof}
 As $\Conv(\RR^n,\RR)$ is a dense subspace of $\ConvO(\RR^n)$, $\Psi$ is uniquely determined by \eqref{eq:DefCandidateGLEquiv} and continuity, if it exists. To construct the desired map, observe first that we may without loss of generality assume that $c=0$. Indeed, the map $f \mapsto cf(0)$ has all the claimed properties, which are preserved when adding two such maps.
 
 \bigskip
 
 Let $\nu\in\MeasC^+(\RR)$ with $\int_{\RR^\times}|s|^{-1} d\nu(s) < \infty$ be given and set $a:=\inf\supp\nu$, $b:=\sup\supp\nu$. For $f\in\ConvO(\RR^n)$ define $D_f:=\{x\in \RR^n : [a,b] \cdot x \subseteq \mathrm{int}\dom f\}$, which is an open and convex neighborhood of $0$. Let us set
 \begin{align}\label{eq:prfGLequivCandidErweiterung}
 	\Psi(f)[x]:=\begin{cases}
 	    \int_{\RR^\times}\frac{f(sx)-f(0)}{|s|^2}d\nu(s), & x\in D_f,\\
 		\liminf\limits_{x'\rightarrow x,  x'\in D_f} \int_{\RR^\times}\frac{f(sx')-f(0)}{|s|^2}d\nu(s), &x\in\partial D_f,\\
 		\infty, & x\in \RR^n \setminus\overline{D_f}.
 	\end{cases}
 \end{align}
 Note that, if well-defined, $\Psi(f)$ is lower semi-continuous by construction and this definition coincides with \eqref{eq:DefCandidateGLEquiv} on $\Conv(\RR^n,\RR)$. In order to show well-definedness, let $x\in D_f$ and observe that as $[a,b]\cdot x$ is compact and contained in the open and convex set $\mathrm{int}\dom(f)$, $\mathrm{int}\dom (f)$ contains a compact and convex neighborhood of $[a,b]\cdot x$. By Proposition \ref{prop:uniformLipschitz}, $f$ is Lipschitz continuous on $[a,b]\cdot x$ with constant $C>0$, and we can estimate
 \begin{align*}
 \frac{|f(sx)-f(0)|}{|s|^2} \leq  C\frac{\|x\|}{|s|}, \quad s\in [a,b]\setminus \{0\},
 \end{align*}
 where the right-hand side is $\nu$-integrable on $\RR^\times$ by assumption. Consequently, the integral
 \begin{align}\label{eq:prfThmCandidateGLInteg}
  \int_{\RR^\times}\frac{f(sx)-f(0)}{|s|^2}d\nu(s)
 \end{align}
 converges, that is, $\Psi(f)[x]$ is well-defined and finite for $x\in D_f$. Moreover, the convexity of $f$ implies the existence of $\alpha>0, z \in \RR^n $ such that $f(y) \geq \alpha \langle y,z\rangle + f(0)$ for every $y \in \RR^n $. Hence, the integrand of \eqref{eq:prfThmCandidateGLInteg} is bounded from below uniformly in $x \in \RR^n $ by $-\alpha|\langle x,z\rangle|\cdot |s|^{-1}$, which implies that $\Psi(f)[x]$ exists for $x \in \partial D_f$ and is not equal to $-\infty$. Overall, we have shown that $\Psi(f):\RR^n  \rightarrow (-\infty, \infty]$ is a well-defined and lower semi-continuous function, which is finite on the neighborhood $D_f$ of $0$.
 
 \bigskip
 
 Let us now show that $\Psi(f)\in\ConvO(\RR^n)$, that is, that $\Psi(f)$ is convex. Note that $\Psi(f)$ is given by \eqref{eq:prfThmCandidateGLInteg} on the open and convex set $D_f$, so it is in particular convex and, hence, continuous on this set. As $\Psi(f)$ is infinite outside $\overline{D_f}$, it is sufficient to prove
 \begin{align}\label{eq:prfThmCandidateGLConvex}
 	\Psi(f)[\lambda x+(1-\lambda)y]\le \lambda\Psi(f)[x]+(1-\lambda)\Psi(f)[y]\quad\text{for }\lambda\in(0,1)
 \end{align}
 for $x\in D_f$, $y\in\partial D_f$ and for $x,y \in \partial D_f$. For the first case, let $y_j \to y$ be a sequence in $D_f$ such that $\Psi(f)[y_j] \to \Psi(f)[y]$, $j\to \infty$. Then, by the convexity of $D_f$, $\lambda x+(1-\lambda)y_j\in D_f$ for all $j\in\NN$, and $\lambda x+(1-\lambda)y\in D_f$ for $\lambda\in (0,1)$, as $D_f$ is also open. The continuity and convexity of $\Psi(f)$ on $D_f$ therefore imply \eqref{eq:prfThmCandidateGLConvex},
 \begin{align}\label{eq:prfThmCandidateGLConvexBnd}
 	\Psi(f)[\lambda x+(1-\lambda)y]=&\lim\limits_{j\rightarrow\infty}\Psi(f)[\lambda x+(1-\lambda)y_j] \nonumber\\
 	\le& \lim\limits_{j\rightarrow\infty}\! \lambda\Psi(f)[x]\!+\!(1-\lambda)\Psi(f)[y_j]
 	= \lambda\Psi(f)[x]+(1-\lambda)\Psi(f)[y].
 \end{align}
 The second case, $x,y\in\partial D_f$, follows from the first case by taking a sequence $x_j \in D_f$ with $x_j \to x$ such that $\Psi(f)[x_j] \to \Psi(f)[x]$. Then, by lower semi-continuity and \eqref{eq:prfThmCandidateGLConvexBnd},
 \begin{align*}
  &\Psi(f)[\lambda x +(1-\lambda)y] \leq \liminf_{j \to \infty} \Psi(f)[\lambda x_j + (1-\lambda)y]\\
                                   \leq& \liminf_{j \to \infty} \lambda \Psi(f)[x_j] + (1-\lambda)\Psi(f)[y] = \lambda\Psi(f)[x]+(1-\lambda)\Psi(f)[y],
 \end{align*}
 that is, $\Psi(f)\in\ConvO(\RR^n)$.
 
 \bigskip
 
 Next, we prove that $\Psi$ is continuous with respect to the topology induced by epi-convergence. Let $f_j \in \ConvO(\RR^n)$ be an epi-convergent sequence with limit $f\in\ConvO(\RR^n)$. In order to show $\Psi(f_j) \rightarrow \Psi(f)$, by Lemma~\ref{lem:epiConvEquiv}(\ref{lem:epiConvEquivPWonDense}), it is sufficient to show that $\Psi(f_j)[x_0]$ converges to $\Psi(f)[x_0]$ for all $x_0\in \RR^n \setminus\partial D_f$.

 First suppose that $x_0 \in D_f$. As $D_f$ is open, there exists $\varepsilon>0$ such that $B_{\varepsilon}(x_0)\subset D_f$, so the compact set $A:=\{\lambda x: \lambda\in[a,b],x\in B_\varepsilon(x_0)\}$ is contained in $\interior\dom f$. Consequently, by Lemma~\ref{lem:epiConvEquiv}(\ref{lem:epiConvEquivUnifComp}), the sequence $(f_j)_j$ converges uniformly to $f$ on $A$. In particular, there exists $j_0\in\NN$ such that $f_j$ is bounded on $A$ by a constant not depending on $j\ge j_0$. Hence, Proposition~\ref{prop:uniformLipschitz} implies that the Lipschitz constants of the functions $f_j$ are bounded on $[a,b]\cdot x_0$ by some constant $C>0$ not depending on $j\ge j_0$, as $A$ is a compact neighborhood of $[a,b]\cdot x_0$. Thus for $j\ge j_0$
 \begin{align}\label{eq:prfThmAContDfEstLip}
 	\frac{|f_j(sx_0)-f_j(0)|}{|s|^2}\le C\frac{\|x_0\|}{|s|},\quad s\in[a,b]\setminus \{0\},
 \end{align}
 that is, the integrands are uniformly bounded on $\supp \nu \setminus \{0\}$ by a $\nu$-integrable function. Dominated convergence and uniform convergence of $f_j$ on $A \supseteq \supp \nu \cdot x_0$ then imply
 \begin{align*}
 	\lim\limits_{j\rightarrow\infty}\Psi(f_j)[x_0]=\lim\limits_{j\rightarrow\infty}\int_{\RR^\times}\!\!\!\frac{f_j(sx_0)-f_j(0)}{|s|^2}d\nu(s)=\int_{\RR^\times}\!\!\!\frac{f(sx_0)-f(0)}{|s|^2}d\nu(s)=\Psi(f)[x_0].
 \end{align*}
 
 \medskip
 
 Suppose now that $x_0\in \RR^n \setminus\overline{D_f}$. Then $[a,b]\cdot x_0\setminus \overline{\dom f}\ne\emptyset$, so there exist $\varepsilon, \eta >0$ such that either $[b-\varepsilon,b]\cdot \lambda x_0\subseteq \RR^n \setminus \overline{\dom f}$ or $[a,a+\varepsilon]\cdot \lambda x_0\subseteq \RR^n \setminus \overline{\dom f}$ or both for all $\lambda \in [1-\eta,1]$. Pick one of the subsets $[a,a+\varepsilon]$, $[b-\varepsilon,b]$ with this property and denote it by $B$. Then $\nu(B)>0$ by the definition of $a$ and $b$. We can further choose $\delta>0$ such that $[-2\delta,2\delta]\cdot x_0 \subseteq \interior \dom f$, as $0 \in \interior\dom f$, and then $[-\delta,\delta] \cap B = \emptyset$.
 
 As we need to show that $\Psi(f_j)[x_0] \to \infty$, we will use the (disjoint) decomposition $\RR = [-\delta,\delta] \cup B \cup \RR\setminus([-\delta,\delta]\cup B)$ to give a diverging lower bound of $\Psi(f_j)[x_0]$.
 
 First, since $[-2\delta,2\delta]\cdot x_0 \subseteq \interior \dom f$, the sequence $(s \mapsto f_j(sx_0))_{j\in \NN}$ converges uniformly to $s \mapsto f(sx_0)$ on $[-2\delta,2\delta]$. Hence, Proposition~\ref{prop:uniformLipschitz} implies that these functions are Lipschitz continuous on $[-\delta,\delta]$ with Lipschitz constant bounded by some $C>0$ independent of $j \in \NN$. In particular, we obtain an estimate 
 \begin{align}\label{eq:prfThmACandEstDelta}
  \frac{|f_j(sx_0)-f_j(0)|}{|s|^2} \leq  C\frac{\|x_0\|}{|s|}, \quad s\in [-\delta,\delta] \setminus \{0\} \text{ and } j \in \NN,
 \end{align}
 similar to \eqref{eq:prfThmAContDfEstLip}. Moreover, the uniform convergence implies that $|f_j(0)|\le D$ for some $D>0$ independent of $j\in\NN$.

 Secondly, as $f \equiv \infty$ on $B \cdot \lambda x_0$, $\lambda \in [1-\eta,1]$, the epi-convergence of $f_j$ implies that for every $k \in \NN$ we can choose $j_k \in \NN$ such that
 \begin{align}\label{eq:prfThmACandEstB}
  f_j(x) \geq k \quad \text{ for all }x \in B \cdot \lambda x_0, \lambda\in[1-\eta,1] \text{ and }  j \geq j_k.
 \end{align}

 For the third set $\RR \setminus([-\delta, \delta] \cup B)$, finally, we will just need the general lower bound given by Lemma~\ref{lem:unifLowerBound}, that is, for every $x_0\in \RR^n$ there exists $z \in \RR^n $ and $\beta \in \RR$, such that 
 \begin{align}\label{eq:prfThmACandEstRest}
  f_j(x) \geq \langle z,x\rangle + \beta \quad \text{ for all } x\in \RR^n \text{ with } \|x\|\le \max\{|a|,|b|\}\|x_0\|  \text{ and } j \in \NN. %
 \end{align}

 We are now in position to prove the lower bound of $\Psi(f_j)[x_0]$. Let $k \in \NN$ be arbitrary and fix $j \geq j_k$. If $x_0 \in \RR^n  \backslash \overline{D_{f_j}}$, $\Psi(f_j)[x_0] = \infty$ and there is nothing to prove. Assume in the following that $x_0 \in \overline{D_{f_j}}$. By the convexity of $D_{f_j}$, $\lambda x_0 \in D_{f_j}$ for $\lambda \in (1-\eta,1)$ because $0\in D_f$. Then, by \eqref{eq:prfThmACandEstDelta} applied for $s\lambda$ (as $\lambda < 1$), \eqref{eq:prfThmACandEstB}, \eqref{eq:prfThmACandEstRest}, and the bound on $f_j(0)$, we can estimate
 \begin{align*}
  &\Psi(f_j)[\lambda x_0]=\int_{\RR^\times} \frac{f_j(s\lambda x_0)-f_j(0)}{|s|^2}d\nu(s)\\
&\geq -\int_{[-\delta,\delta]\setminus \{0\}} \!\!C\frac{\lambda\|x_0\|}{|s|} d\nu(s)+\int_B \frac{k-D}{|s|^2}d\nu(s) + \int_{\RR^\times\setminus ([-\delta,\delta]\cup B)} \!\!\!\!\!\!\!\!\!\!\!\!\frac{\langle z,s\lambda x_0\rangle + \beta-D}{|s|^2}d\nu(s).
 \end{align*}
 Note that all terms are well-defined and finite due to the compactness of $\supp \nu$ and the $\nu$-integrability of $|s|^{-1}$, and do not depend on $j \geq j_k$ anymore. Hence, we can rewrite this estimate with constants $C',C'' \in \RR$, not depending on $j$, to obtain
 \begin{align}\label{eq:prfThmaCandEstDfj}
  \Psi(f_j)[\lambda x_0] \geq  C' + C''\lambda + k \int_B \frac{1}{|s|^2}d\nu(s),
 \end{align}
 where $\int_B |s|^{-2}d\nu(s) > 0$ as $\nu(B)>0$ and $|s|^{-2} > 0$. The convexity of $\Psi(f_j)$ together with \eqref{eq:prfThmaCandEstDfj} and the fact that $\Psi(f_j)[0] = 0$ by the definition of $\Psi$ implies
 \begin{align*}
  \lambda \Psi(f_j)[x_0] \geq \Psi(f_j)[\lambda x_0] - (1-\lambda)\Psi(f_j)[0] \geq C' + C''\lambda + k \int_B \frac{1}{|s|^2}d\nu(s).
 \end{align*}
 Letting $\lambda \to 1$, we see that $\Psi(f_j)[x_0] \geq (C'+C'') + k \int_B |s|^{-2}d\nu(s)$ and, consequently, $\Psi(f_j)[x_0] \to \infty$, as $j \to \infty$.
 In total, we have shown that $\Psi(f_j)$ epi-converges to $\Psi(f)$ for $j\rightarrow\infty$, so $\Psi$ is continuous.
 
 It remains to see that $\Psi$ is additive and $\GL(n)$-equivariant. However, the restriction of $\Psi$ to the dense subset $\Conv(\RR^n,\RR)$ is additive and $\GL(n)$-equivariant by construction, so this follows from the continuity of $\Psi$, as the $\GL(n)$-action and the addition of functions in $\ConvO(\RR^n)$ are continuous (see Lemma~\ref{lem:contAddition}).
\end{proof}

\bigskip

Having established the well-definedness of the maps from Theorem~\ref{thm:GLequivCandidate}, we can now show the conditions for their monotonicity and dual translation-invariance claimed in Theorem~\ref{mthm:CharGLEquiv}.

\begin{proposition}\label{prop:GLequivMonDTransInv}
 Suppose that $\nu\in\MeasC^+(\RR)$ with $\int_{\RR^\times}|s|^{-1} d\nu(s) < \infty$ and $c \in \RR$.
 
 Then the map $\Psi: \ConvO(\RR^n) \rightarrow \ConvO(\RR^n)$, defined by \eqref{eq:DefCandidateGLEquiv} is 
 \begin{enumerate}
  \item \label{propGLEquivMon} monotone if and only if $\int_{\RR^\times}|s|^{-2}d\nu(s) \leq c < \infty$;
  \item \label{propGLEquivDTransInv} dually translation-invariant if and only if $\int_{\RR^\times} s^{-1} d\nu(s) = 0$.
 \end{enumerate}
\end{proposition}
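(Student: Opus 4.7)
My plan is to treat the two conditions separately, working with the explicit formula \eqref{eq:DefCandidateGLEquiv} on $\Conv(\RR^n,\RR)$ and transferring results to $\ConvO(\RR^n)$ either by continuity or through the extension~\eqref{eq:prfGLequivCandidErweiterung}. In particular, since $\Conv(\RR^n,\RR)$ is dense in $\ConvO(\RR^n)$ and $\Psi$ is continuous, both monotonicity and dual translation-invariance are equivalent to the corresponding statement on $\Conv(\RR^n,\RR)$, which puts the explicit integral formula at my disposal throughout.

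I would dispatch part (\ref{propGLEquivDTransInv}) first, since it is essentially a direct computation. For any linear $\ell(y)=\langle v,y\rangle$ and $f\in\Conv(\RR^n,\RR)$, using $\ell(0)=0$ and $\ell(sx)=s\ell(x)$, I get
\begin{align*}
\Psi(f+\ell)[x]-\Psi(f)[x]
= \int_{\RR^\times}\frac{\ell(sx)-\ell(0)}{|s|^2}\,d\nu(s)
= \ell(x)\int_{\RR^\times}\frac{1}{s}\,d\nu(s),
\end{align*}
where the last integral is absolutely convergent by the standing assumption $\int_{\RR^\times}|s|^{-1}d\nu<\infty$. Because $\ell(x)$ attains every real value as $\ell$ and $x$ vary, dual translation-invariance of $\Psi|_{\Conv(\RR^n,\RR)}$ is equivalent to $\int_{\RR^\times}s^{-1}d\nu(s)=0$, and the corresponding statement for $\ConvO(\RR^n)$ then follows by continuity.

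For part (\ref{propGLEquivMon}), the ``if'' direction is straightforward: assuming $\int_{\RR^\times}|s|^{-2}d\nu(s)\le c$, I split the integrand to obtain, for $f\in\Conv(\RR^n,\RR)$,
\begin{align*}
\Psi(f)[x]=\Bigl(c-\int_{\RR^\times}|s|^{-2}\,d\nu(s)\Bigr)f(0)+\int_{\RR^\times}\frac{f(sx)}{|s|^2}\,d\nu(s),
\end{align*}
where both coefficients are non-negative, so monotonicity on $\Conv(\RR^n,\RR)$ is immediate. To pass to $\ConvO(\RR^n)$, I observe that $f\le g$ forces $\dom g\subseteq\dom f$ and hence $D_g\subseteq D_f$; then a brief case analysis using \eqref{eq:prfGLequivCandidErweiterung} and lower semi-continuity distinguishes $x\in D_g$, $x\in\partial D_g$, and $x\notin\overline{D_g}$.

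The substantive work lies in the converse: I assume $\Psi$ monotone and use Lemma~\ref{lem:GWdistrMonot} to conclude that each Goodey--Weil distribution $\GW(\Psi_x)$ is a compactly supported non-negative (hence finite) measure. From \eqref{eq:GWonCinfbyPsi} applied to \eqref{eq:DefCandidateGLEquiv}, a short calculation yields
\begin{align*}
\GW(\Psi_x)(\phi)=c\phi(0)+\int_{\RR^\times}\frac{\phi(sx)-\phi(0)}{|s|^2}\,d\nu(s),\quad\phi\in C_c^\infty(\RR^n).
\end{align*}
Fix $x\ne 0$. First I rule out $\int_{\RR^\times}|s|^{-2}d\nu=\infty$: since $\supp\nu$ is compact, any such divergence must come from a neighborhood of $0$, so I can produce smooth bumps $\phi_n\ge 0$ with $\|\phi_n\|_\infty\le 1$, $\phi_n(0)=0$, and supports lying in a single compact tube around a segment of $\RR\cdot x\setminus\{0\}$, such that $\phi_n(sx)=1$ on a sequence $A_n\subseteq\RR^\times$ with $\int_{A_n}|s|^{-2}d\nu\to\infty$; this yields $\GW(\Psi_x)(\phi_n)\to\infty$, contradicting finiteness of a non-negative measure on a fixed compact set. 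Once $\int|s|^{-2}d\nu<\infty$ is known, the same splitting as before shows
\begin{align*}
\GW(\Psi_x)=\Bigl(c-\int_{\RR^\times}|s|^{-2}d\nu\Bigr)\delta_0+(T_x)_*\bigl(|s|^{-2}\,\nu\bigr),
\end{align*}
with $T_x(s)=sx$; since the pushforward term is non-negative and puts no mass at $0$, non-negativity of $\GW(\Psi_x)$ forces $c\ge \int_{\RR^\times}|s|^{-2}d\nu$. The main obstacle is the test-function construction in the divergent case: one has to realize arbitrarily large $\GW(\Psi_x)$-values on bumps of uniformly bounded support and uniformly bounded sup-norm, which is what rules out monotonicity when $|s|^{-2}$ fails to be $\nu$-integrable.
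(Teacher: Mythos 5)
Your proof is correct and follows essentially the paper's route: both directions of part (\ref{propGLEquivMon}) rest on Lemma~\ref{lem:GWdistrMonot} (monotonicity makes each $\GW(\Psi_x)$ a non-negative, compactly supported, hence finite measure) combined with testing against functions bounded by $1$ that vanish at the origin, the ``if'' direction is the same splitting of the integrand, and part (\ref{propGLEquivDTransInv}) is the same computation obtained by plugging linear functions into \eqref{eq:DefCandidateGLEquiv}. Your only deviation --- first forcing $\int_{\RR^\times}|s|^{-2}d\nu<\infty$ with bump functions and then reading off $c-\int_{\RR^\times}|s|^{-2}d\nu\ge 0$ as the mass of the atom of $\GW(\Psi_x)$ at the origin --- is a minor repackaging of the paper's single monotone-convergence step, where $\phi_\varepsilon\nearrow 1$ on $\RR^\times$ is compared with $\GW(\Psi_x)(1)=c$.
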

\begin{proof}
 Suppose that $\Psi$ is defined by \eqref{eq:DefCandidateGLEquiv} with $\nu$ and $c$ as in the statement of the proposition. If $\Psi$ is monotone its Goodey--Weil distribution is positive by Lemma~\ref{lem:GWdistrMonot}. Let $\phi\in C^\infty(\RR)$ be an even function with $\phi(s)= 0 $ for $|s|<1$, $\phi(s)=1$ for $|s|>2$ and $\phi(s)$ non-decreasing for $1<s<2$, and set $\phi_\varepsilon(y):=\phi\left(\frac{\mathrm{pr}_{\langle \bar e\rangle}y}{\varepsilon}\right)$ for $\varepsilon>0$, $y\in\RR^n$. Then $\phi_\varepsilon\le 1$. Hence, we have $\GW(\Psi_x)(\phi_\varepsilon) \le\GW(\Psi_x)(1)=c$ and thus for $x=\bar e$ and for all $\varepsilon>0$
 \begin{align*}
  \int_{\RR^\times}\frac{\phi\left(\frac{s}{\varepsilon}\right)}{|s|^2}d\nu(s)\le c.
 \end{align*}
 As $\phi$ is non-decreasing on $[0,\infty)$ and non-increasing on $(-\infty,0]$, the integrand converges pointwise monotonously on $\RR^\times$ to $1$. Monotone convergence thus implies
 \begin{align*}
 	\int_{\RR^\times}\frac{1}{|s|^2}d\nu(s)=\lim\limits_{\varepsilon\rightarrow0}\int_{\RR^\times}\frac{\phi\left(\frac{s}{\varepsilon}\right)}{|s|^2}d\nu(s)\le c.
 \end{align*}
If, conversely, $\int_{\RR^\times}|s|^{-2}d\nu(s) \leq c < \infty$, we can split up the integrand to obtain
\begin{align*}
 \Psi(f)[x] = \left(c - \int_{\RR^\times}\frac{1}{|s|^{2}}d\nu(s)\right)f(0) +\int_{\RR^\times} \frac{f(sx)}{|s|^2} d\nu(s),
\end{align*}
for $f \in \Conv(\RR^n,\RR), x \in \RR^n $, which clearly is monotone. This shows claim~\eqref{propGLEquivMon}.

Claim~\eqref{propGLEquivDTransInv} follows directly by plugging in linear functions into \eqref{eq:DefCandidateGLEquiv}.
\end{proof}

We now turn to the ``only if''-part of Theorem~\ref{mthm:CharGLEquiv}, that is, that every continuous, additive and $\GL(n)$-equivariant endomorphism has the form of \eqref{eq:DefCandidateGLEquiv}. We start by analyzing general distributions satisfying the invariance properties that are imposed on the Goodey--Weil-distributions (see Lemma~\ref{lem:equivPropGWdistr}).

\begin{lemma}\label{lem:GLxInvDistr}
 Suppose that $u \in \DistribC(\RR^n)$ and $x \in \RR^n $.
 If $u$ is $\GL(n)_x$-invariant, then
 \begin{itemize}
  \item there exists $c \in \RR$ such that $u(\varphi) = c\varphi(0)$ if $x = 0$,
  \item there exists $u_0 \in \DistribC(\RR)$ such that $u(\varphi) = u_0(s \mapsto \varphi(sx))$ if $x \neq 0$,
 \end{itemize}
 for every $\varphi \in C_c^\infty(\RR^n)$.
\end{lemma}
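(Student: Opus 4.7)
My plan is to use the $\GL(n)_x$-invariance to first constrain the support of $u$ and then its transverse structure on that support.

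For $x=0$, we have $\GL(n)_0 = \GL(n)$, which acts transitively on $\RR^n \setminus \{0\}$. Since $\supp u$ is compact, any point $y \neq 0$ lying in the support would drag in the unbounded orbit $\GL(n)\cdot y$, so $\supp u \subseteq \{0\}$. The standard structure theorem for distributions supported at a single point gives $u = \sum_{|\alpha| \leq N} c_\alpha \partial^\alpha \delta_0$ for finitely many coefficients $c_\alpha \in \RR$. Testing invariance under the dilation $\lambda I$ (for $\lambda > 0$) multiplies $\partial^\alpha \delta_0$ by $\lambda^{|\alpha|}$, so only the $|\alpha| = 0$ term can be nonzero, yielding $u(\varphi) = c\varphi(0)$.

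For $x \neq 0$, I would choose a basis so that $x = e_1$ and write points as $(t, z) \in \RR \times \RR^{n-1}$. Then $\GL(n)_x$ consists of block matrices $\begin{pmatrix} 1 & v^T \\ 0 & A \end{pmatrix}$ with $v \in \RR^{n-1}$ and $A \in \GL(n-1)$, and any point $(t_0, z_0)$ with $z_0 \neq 0$ has $\GL(n)_x$-orbit equal to $\RR \times (\RR^{n-1}\setminus\{0\})$, which is unbounded. Compactness of $\supp u$ therefore forces $\supp u \subseteq \RR\cdot x$. By the standard structure theorem for distributions supported on a linear submanifold (e.g.\ H\"ormander, Thm.~2.3.5), there is a unique finite decomposition
\begin{equation*}
u(\varphi) = \sum_{|\alpha| \leq N} (-1)^{|\alpha|} u_\alpha\bigl(t \mapsto \partial^\alpha_z \varphi(t,0)\bigr), \quad \varphi \in C_c^\infty(\RR^n),
\end{equation*}
with uniquely determined $u_\alpha \in \DistribC(\RR)$. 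To kill the terms with $|\alpha| \geq 1$, I would test invariance against the one-parameter subgroup $\eta_\lambda = \mathrm{diag}(1, \lambda, \dots, \lambda) \in \GL(n)_x$, which scales only the transverse coordinates and hence transforms the $\alpha$-summand by a factor $\lambda^{|\alpha|}$. Uniqueness of the decomposition then forces $(\lambda^{|\alpha|} - 1)\, u_\alpha = 0$ for every $\lambda > 0$, i.e.\ $u_\alpha = 0$ whenever $|\alpha| \geq 1$. The remaining $u_0 \in \DistribC(\RR)$ yields the desired representation $u(\varphi) = u_0(s \mapsto \varphi(sx))$.

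The main technical input is the structure theorem for distributions supported on a submanifold, together with its uniqueness clause, which is what lets me conclude the vanishing of each individual $u_\alpha$ from the scaling identity. If one prefers to avoid citing it, a short alternative is to induct on the order of $u$, using the shear elements $\begin{pmatrix} 1 & v^T \\ 0 & I \end{pmatrix} \in \GL(n)_x$ to peel off one normal derivative at a time.
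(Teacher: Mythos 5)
Your proof is correct and takes essentially the same route as the paper: in both cases the compactness of $\supp u$ together with the unbounded $\GL(n)_x$-orbits confines the support to $\{0\}$ resp.\ $\langle x\rangle$, the structure theorems for distributions supported at a point or on a subspace (H\"ormander Thm.~2.3.4/2.3.5) give the finite decomposition, and a scaling element of the stabilizer eliminates all terms with $|\alpha|\geq 1$. The only cosmetic difference is that you conclude $u_\alpha=0$ from the uniqueness clause of the decomposition, whereas the paper tests against product functions $\tilde\varphi(t)\psi_{\alpha'}(y')$ with monomial transverse factors, which amounts to the same thing.
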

\begin{proof}
 Assume first that $x = 0$. Then $u$ is $\GL(n)$-invariant and therefore also $\supp u$ is $\GL(n)$-invariant. As $\supp u$ is compact, we conclude that $\supp u \subseteq \{0\}$. By \cite{Hoermander2003}*{Thm.~2.3.4}, there exist constants $a_\alpha \in \RR$, $|\alpha|\leq k$, where $\alpha$ is a multi-index and $k$ is the (finite) order of $u$, such that
 \begin{align*}
  u(\varphi) = \sum_{|\alpha|\leq k} a_\alpha \partial_\alpha \varphi(0), \quad \varphi \in C_c^\infty(\RR^n).
 \end{align*}
 Now let $\lambda > 0$ and consider $\eta_\lambda \in \GL(n)$ defined by $\eta_\lambda(y)=\lambda y$. Then $u(\eta_\lambda \varphi) = u(\varphi)$ by the $\GL(n)$-invariance of $u$, and consequently, by the chain rule,
 \begin{align*}
  \sum_{|\alpha|\leq k} \lambda^{|\alpha|} a_\alpha \partial_\alpha \varphi(0) =u(\eta_\lambda \varphi) = u(\varphi)= \sum_{|\alpha|\leq k} a_\alpha \partial_\alpha \varphi(0), \quad  \varphi \in C_c^\infty(\RR^n).
 \end{align*}
 Using that $u$ has compact support, we plug in the monomials $\psi_{\alpha}(y) = y_1^{\alpha_1}\cdots y_n^{\alpha_n}$, $\alpha = (\alpha_1, \dots, \alpha_n)$, which satisfy $\partial_{\alpha'} \psi_\alpha(0) = \delta_{\alpha, \alpha'}$, to obtain $\lambda^{|\alpha|} = 1$ for every $|\alpha| \leq k$. As this can only be true for every $\lambda>0$ if $|\alpha|=0$, we have shown the first claim.

 Assume now that $x \neq 0$. Then $\GL(n)_x$ consists of all non-degenerate linear maps that keep the span $\langle x\rangle$ of $x$ fixed. As $\supp u$ is compact and $\GL(n)_x$-invariant, we conclude that $\supp u \subseteq \langle x\rangle$. Applying \cite{Hoermander2003}*{Thm.~2.3.5} to the splitting of variables induced by $\RR^n  = \langle x \rangle \oplus \langle x \rangle^\perp$, we obtain distributions $u_\alpha \in \DistribC(\RR)$, $|\alpha| \leq k$ and $\alpha=(0,\alpha')$, of order $k-|\alpha|$, such that
 \begin{align*}
  u(\varphi) = \sum_{|\alpha| \leq k} u_\alpha(\varphi_\alpha), \quad \varphi \in C_c^\infty(\RR^n),
 \end{align*}
 where $\varphi_{(0,\alpha')}(t) = \partial_{\alpha'}\varphi(tx,y')|_{y'=0}$. For $\lambda > 0$ and taking $\tau_\lambda \in \GL(n)_x$ defined by $\tau_\lambda(tx,y') = (tx, \lambda y')$, we conclude as before that
 \begin{align*}
  \sum_{|\alpha| \leq k} \lambda^{|\alpha|} u_\alpha(\varphi_\alpha) = \sum_{|\alpha| \leq k} u_\alpha(\varphi_\alpha), \quad \varphi \in C_c^\infty(\RR^n),
 \end{align*}
 and, by taking $\varphi(tx,y') = \tilde{\varphi}(t)\psi_{\alpha'}(y')$ for $\tilde{\varphi} \in C_c^\infty(\RR)$ and the monomial $\psi_{\alpha'}$ from the first case, we see that $\lambda^{|\alpha|} = 1$ for all $|\alpha| \leq k$, so $u_\alpha = 0$ for $|\alpha| > 0$ as before. This completes the proof of the second claim.
\end{proof}

\noindent
We are now in position to complete the

\begin{proof}[Proof of Theorem~\ref{mthm:CharGLEquiv}]
 By Theorem~\ref{thm:GLequivCandidate} and Proposition~\ref{prop:GLequivMonDTransInv}, we are left to prove that every continuous, additive and $\GL(n)$-equivariant map $\Psi:\ConvO(\RR^n) \rightarrow \ConvO(\RR^n)$ is of the form~\eqref{eq:DefCandidateGLEquiv}.
 
 First, note that $\Psi$ restricts to an endomorphism of $\Conv(\RR^n,\RR)$ by Lemma~\ref{lem:restrictRadEquiv}. We can therefore consider its family of Goodey--Weil distributions $(\GW(\Psi_x))_{x \in \RR^n }$. Every $\GW(\Psi_x)$ is invariant under the stabilizer $\GL(n)_x$ of $x \in \RR^n $ by Lemma~\ref{lem:equivPropGWdistr}. Applying Lemma~\ref{lem:GLxInvDistr}, there is $c \in \RR$ and a distribution $u \in \DistribC(\RR)$ such that $\Psi(f)[0] = cf(0)$ and
 \begin{align*}
  \GW(\Psi_{\bar e})(f) = u(s \mapsto f(s \bar e)), \quad f \in \Conv(\RR^n,\RR) \cap C^\infty(\RR^n),
 \end{align*}
 where $\bar e$ is a pole of $\unitsurfn$ (but could be some arbitrary, non-zero element of $\RR^n$). The $\GL(n)$-equivariance of $\Psi$ implies that for $\eta_x \in \GL(n)$, $x \in \RR^n \setminus \{0\}$, with $\eta_x \bar e = x$,
 \begin{align*}
  \Psi(f)[x] = \Psi(f)[\eta_x \bar e] = \Psi(\eta_x^{-1} \cdot f)[\bar e] = u(s \mapsto (\eta_x^{-1} \cdot f)(s \bar e)) = u(s \mapsto f(s x))
 \end{align*}
 for every $f \in \Conv(\RR^n,\RR)\cap C^\infty(\RR^n)$. Note that $\Psi(f)[x]$ depends only on the restriction of $f$ to $\langle x \rangle$. Conversely, every $f \in \Conv(\RR,\RR)$ defines a convex function $\tilde{f}$ on $\RR^n $ by $\tilde{f}(y) = f(\bar e^\ast(y))$, where $\bar e^\ast\in (\RR^n)^*$ is invariant under the stabilizer $\GL(n)_{\bar e}$ of $\bar e$ and satisfies $\bar{e}^\ast(\bar e)=1$. We can therefore define an endomorphism $\Psi^\RR: \Conv(\RR,\RR) \rightarrow \Conv(\RR, \RR)$ by $\Psi^\RR(f)[t] = \Psi(\tilde{f})[t\bar e]$, $t \in \RR$. If $f$ is smooth, then so is $\tilde{f}$, and $\Psi^\RR (f)$ is given by
 \begin{align}\label{eq:prfThmADefPsiRR}
  \Psi^\RR(f)[t] = \Psi(\tilde{f})[t\bar e] = u(s \mapsto f(st)), \quad t \in \RR.
 \end{align}
 Observe that $\Psi^\RR$ determines $\Psi$ completely by this equation.
 
 Now let $f\in\Conv(\RR,\RR)$ be a smooth convex function. As $\Psi^\RR(f)[t]$ is convex, the second derivative of $\Psi^\RR(f)[t]$ is non-negative, and using \cite{Hoermander2003}*{Thm.~2.1.3} we obtain
 \begin{align}\label{eq:prfThmAPosSGW}
  0 \leq \partial^2_t \Psi^\RR(f)[t] = \partial^2_t u(s \mapsto f(s t)) = u (s \mapsto s^2 f''(st)).
 \end{align}
 If $\phi \in C_c^\infty(\RR)$ is non-negative, let $\Phi \in \Conv(\RR,\RR)$ be such that $\Phi''(s)=\phi(s) \geq 0$. Then the inequality in \eqref{eq:prfThmAPosSGW} yields for $f = \Phi$
 \begin{align*}
  (s^2 u)(\phi) = u \left(s \mapsto s^2\phi(s)\right)=u\left(s \mapsto s^2\Phi''(s)\right) = \partial^2_{t} \Psi^\RR(\Phi)[t]|_{t=1} \geq 0,
 \end{align*}
 that is, $s^2 u$ is a non-negative distribution (with compact support). We can therefore find a non-negative measure $\nu \in \MeasC^+(\RR)$ such that $s^2 u = \nu$ as distributions.

 Next, note that if $\phi \in C^\infty(\RR)$ vanishes on a neighborhood of $0$, then 
 \begin{align}\label{eq:prfThmARepVanAround0}
  u(\phi) = u\left(s \mapsto s^2 \frac{\phi(s)}{|s|^2}\right) = \int_{\RR^\times} \frac{\phi(s)}{|s|^2}d\nu(s).
 \end{align}
 Hence, by approximating the (convex) functions $f_\delta(t) = (t-\delta)_+ + (\delta-t)_+$,  where $t_+ = \max\{t,0\}$, by smooth functions, we get
 \begin{align}\label{eq:prfThmAIntegNu}
  \Psi^\RR(f_\delta)[1] = \int_{\RR\setminus [-\delta,\delta]} \frac{f_\delta(s)}{|s|^2}d\nu(s) = \int_{\RR\setminus [-\delta,\delta]} \frac{|s|-\delta}{|s|^2}d\nu(s).
 \end{align}
 For $\delta \to 0$, $f_\delta$ epi-converges to $f(t) = |t|$, so by continuity, the left-hand side of \eqref{eq:prfThmAIntegNu} converges to $\Psi^\RR(f)[1]$. By monotone convergence, the right-handside of $\eqref{eq:prfThmAIntegNu}$ converges to $\int_{\RR^\times}|s|^{-1} d\nu$, which thus must be finite.
 
 \bigskip
 
 Hence, we can apply Theorem~\ref{thm:GLequivCandidate} to $\nu$ and $c$ to obtain a continuous, additive map $\hat\Psi:\ConvO(\RR^n) \rightarrow \ConvO(\RR^n)$. We claim that $\Psi=\hat{\Psi}$. It is enough to show that their Goodey--Weil distributions coincide or, equivalently, $\Psi^\RR (f)[1] = \hat\Psi^\RR(f)[1]$ for every $f \in \Conv(\RR,\RR)$, where $\hat\Psi^\RR$ is defined similarly to \eqref{eq:prfThmADefPsiRR}. Indeed, as $\hat\Psi$ satisfies the conditions of the theorem, we may repeat the steps taken for $\Psi$ to conclude that $\hat\Psi$ is uniquely determined by $\hat\Psi^\RR$ given by
 \begin{align*}
  \hat\Psi^\RR(f)[t] = cf(0)+\int_{\RR^\times}\frac{f(st)-f(0)}{|s|^2}d\nu(s),\quad t \in \RR,
 \end{align*}
 for every $f \in \Conv(\RR,\RR)$, and thus, by $\GL(n)$-equivariance, just by $\hat\Psi^\RR(f)[1]$.
 
 In order to show $\Psi^\RR = \hat\Psi^\RR$, first let $f\in\Conv(\RR,\RR)$ be a smooth convex function that vanishes in a neighborhood of $0$. Then \eqref{eq:prfThmARepVanAround0} shows that
 \begin{align*}
  \Psi^\RR(f)[1]=\GW(\Psi^\RR_1)[f]=u(f)=\hat\Psi^\RR(f)[1].
 \end{align*}
 If $f\in\Conv(\RR,\RR)$ is an arbitrary convex function that vanishes in a neighborhood of $0$, we can use a mollifier and approximate $f$ by a sequence of smooth convex functions with the same property, so $\Psi^\RR(f)[1]=\hat\Psi^\RR(f)[1]$, as $\Psi^\RR$ and $\hat\Psi^\RR$ are both continuous. Next, let $f\in\Conv(\RR,\RR)$ satisfy $f(0)=0\le f(t)$ for all $t\in\RR$. Then $f_\delta(t):=\max(f(t)-\delta,0)$ vanishes on a neighborhood of $0$ and converges to $f$ for $\delta\rightarrow0$. Thus $\Psi^\RR$ and $\hat\Psi^\RR$ coincide on functions of this type by continuity.
 
 \bigskip
 
 \noindent Next, note that, by $\GL(n)$-equivariance, $\Psi^\RR(s \mapsto 1)$ is constant, and therefore
 \begin{align*}
 \Psi^\RR(s \mapsto 1)[1]=\Psi^\RR(s \mapsto 1)[0] = c = \hat\Psi^\RR(s \mapsto 1)[1]. 
 \end{align*}
 By approximating the function $g_+(t) = t_+$ by the functions $g_{+,\delta}(t) = (t-\delta)_+$, $\delta \to 0$, we further see that $\Psi^\RR(g_+) = \hat\Psi^\RR(g_+)$ and similarly $\Psi^\RR(g_-)=\hat\Psi^\RR(g_-)$ for $g_-(t) = (-t)_+$. If we consider $g(t)=t$, we thus obtain 
 \begin{align*}
 	\Psi^\RR(g_-)\!+\!\Psi^\RR(g)=\Psi^\RR(g_-\! +\! g)=\Psi^\RR(g_+)=\hat\Psi^\RR(g_+)=\hat\Psi^\RR(g_-\! +\! g)= \hat\Psi^\RR(g_-)\!+\!\hat\Psi^\RR (g).
 \end{align*}
 As $\Psi^\RR(g_-) = \hat\Psi^\RR(g_-)$ and because both sides are finite, we see that $\Psi^\RR$ and $\hat\Psi^\RR$ coincide on linear functions. 
 
 Finally, let $f\in\Conv(\RR,\RR)$ be an arbitrary convex function. Then there exists an affine function $g$ on $\RR$ such that $\hat{f}(t):=f(t)-g(t)$ is a convex function with $\hat f(t)\ge \hat f(0)=0$. By additivity and the previously shown,
 \begin{align*}
 	\Psi^\RR(f)=\Psi^\RR(\hat{f})+\Psi^\RR(g)=\hat\Psi^\RR(\hat{f})+\hat\Psi^\RR(g)=\hat\Psi^\RR(f),
 \end{align*}
 which finishes the proof.
 \end{proof}

\noindent
In the remainder of this section, we give a proof of Theorem~\ref{mcor:CharGLEquivWholeConv}, which makes use of Theorem~\ref{mthm:CharGLEquiv}.

\begin{proof}[Proof of Theorem~\ref{mcor:CharGLEquivWholeConv}]
	First note that every map $\Psi:\Conv(\RR^n)\rightarrow\Conv(\RR^n)$ defined by \eqref{eq:MainCorGLEquivWholeConv} or by $\Psi \equiv 0, \mathbbm{1}_{\{0\}}^\infty$ is continuous, additive and $\GL(n)$-equivariant.
	
	Now let $\Psi:\Conv(\RR^n) \rightarrow \Conv(\RR^n)$ be non-trivial, that is, $\Psi \not \equiv 0$, as well as continuous, additive and $\GL(n)$-equivariant, and consider $\Psi(0) = \Psi(y \mapsto 0)$, which is $\GL(n)$-invariant by the $\GL(n)$-equivariance of $\Psi$. We claim that $\Psi(0) \in \Conv(\RR^n,\RR)$ or $\Psi(0) = \mathbbm{1}_{\{0\}}^\infty$.
	
	Indeed, if there exists $x \in \RR^n \setminus\{0\}$ with $\Psi(0)[x] < \infty$, then  $\Psi(0) < \infty$ on $\RR^n \setminus\{0\}$, since $\Psi(0)$ is $\GL(n)$-invariant, and thus $\Psi(0)[0]<\infty$ by convexity. Moreover, if $\Psi(0)[x] = \infty$ for some $x \in \RR^n  \backslash \{0\}$, then  $\Psi(0)[x] =\infty$ for every $x \in \RR^n  \setminus\{0\}$ by the $\GL(n)$-invariance. The additivity of $\Psi$ and the fact that $\Psi(0)[0] < \infty$, as $\Psi(0) \in \Conv(\RR^n)$, then imply that $\Psi(0)[0] = \Psi(0)[0]+\Psi(0)[0]$, that is, $\Psi(0)[0] = 0$ and so $\Psi(0) = \mathbbm{1}_{\{0\}}^\infty$.
	
	\medskip
	
	If $\Psi(0) = \mathbbm{1}_{\{0\}}^\infty$, then $\Psi(f)= \Psi(f+0) = \Psi(f) + \mathbbm{1}_{\{0\}}^\infty$ for all $f\in\Conv(\RR^n,\RR)$, so $\Psi(f)[0]$ must be finite and $\Psi(f)[x]=\infty$ for all $x\in \RR^n \setminus\{0\}$ for any $f\in\Conv(\RR^n,\RR)$. As $\Psi$ is continuous with respect to the topology induced by epi-convergence, this implies that the map $f\mapsto \Psi(f)[0]$ is continuous on $\Conv(\RR^n,\RR)$. We can thus consider its Goodey--Weil distribution, which is $\GL(n)$-invariant. By Lemma~\ref{lem:GLxInvDistr}, there exists $c \in \RR$ such that $\Psi(f)[0] = c f(0)$ for every $f \in \Conv(\RR^n,\RR) \cap C^\infty(\RR^n)$. However, the map $f \mapsto c f(0) + \mathbbm{1}_{\{0\}}^\infty$ does not possess a continuous extension to $\Conv(\RR^n)$, unless $c = 0$, as can be seen by approximating any function $f \in \Conv(\RR^n)$ that is infinite in a neighborhood of $0$ with finite convex functions, hence, $\Psi \equiv \mathbbm{1}_{\{0\}}^\infty$. 
	
	\medskip
	
	If $\Psi(0) \in \Conv(\RR^n,\RR)$, on the other hand, then $\Psi$ maps $\Conv(\RR^n,\RR)$ into itself by the same reasoning as in the proof of Lemma~\ref{lem:restrictRadEquiv}. We can therefore consider the family of Goodey--Weil distributions $(\GW(\Psi_x))_{x\in \RR^n }$ and apply (the arguments of) Theorem~\ref{mthm:CharGLEquiv} to obtain $c \in \RR$ and $\nu \in \MeasC^+(\RR)$, such that %
	\begin{align}\label{eq:prfCorB}
		\Psi(f)[x] = c f(0) + \int_{\RR^\times} \frac{f(sx)-f(0)}{|s|^2} d\nu(s), \quad x \in \RR^n ,
	\end{align}
	for all $f \in \Conv(\RR^n,\RR)$. We want to use \eqref{eq:prfCorB} to show that $\Psi$ is of the claimed form.
	
	\medskip 
	
	First fix $y_0\in \RR^n \setminus\{0\}$. Then there exists $x_0\in \RR^n $ such that $\Psi(\mathbbm{1}_{\{y_0\}}^\infty)[x_0]<\infty$. We claim that this implies $\supp\Psi_{x_0}\subseteq\{y_0\}$. To see this, let $\phi\in C^\infty_c(\RR^n)$ be a function with $y_0\notin \supp\phi$. As $\supp \phi$ is closed, we can choose $\delta>0$ such that $B_{\delta}(y_0) \cap \supp\phi=\emptyset$. Now consider the function $f(y):=\|y-y_0\|^2$. Then there exists $\varepsilon>0$ such that $f+t\phi$ is convex for all $t\in(-\varepsilon,\varepsilon)$ and for such $t$ we can use \eqref{eq:GWonCinfbyPsi} to calculate
	\begin{align*}
		t\GW(\Psi_{x_0})[\phi]=&\Psi(f+t\phi)[x_0]-\Psi(f)[x_0]\\
		=&\Psi(f+t\phi)[x_0]+\Psi(\mathbbm{1}_{{\{y_0\}}}^\infty)[x_0]-\left(\Psi(f)[x_0]+\Psi(\mathbbm{1}_{{\{y_0\}}}^\infty)\right)[x_0]\\
		=&\Psi(\mathbbm{1}_{{\{y_0\}}}^\infty+f+t\phi)[x_0]-\Psi(\mathbbm{1}_{{\{y_0\}}}^\infty+f)[x_0]\\
		=&\Psi(\mathbbm{1}_{{\{y_0\}}}^\infty)[x_0]-\Psi(\mathbbm{1}_{{\{y_0\}}}^\infty)[x_0]=0.
	\end{align*}
	Here we have used $\Psi(\mathbbm{1}_{\{y_0\}}^\infty)[x_0]<\infty$, the additivity of $\Psi$, and the fact that $f$ and $f+t\phi$ vanish in $y_0$.
	Thus, $\supp\GW(\Psi_{x_0})\subset\{y_0\}$.
	
	\medskip
	
	Next, we want to show that for every $y _0 \in \RR^n \setminus\{0\}$ we can choose $x_0 \neq 0$ with $\Psi(\mathbbm{1}_{\{y_0\}}^\infty)[x_0]<\infty$. Note that this holds for all $y_0\in \RR^n\setminus\{0\}$ as soon as it holds for one point due to the $\GL(n)$-equivariance of $\Psi$. Assume for the sake of a contradiction that $\dom\Psi(\mathbbm{1}_{{\{y_0\}}}^\infty) \subseteq \{0\}$ for all $y_0\in \RR^n \setminus\{0\}$. Then $\Psi(\mathbbm{1}_{{\{y_0\}}}^\infty)[0]<\infty$, as $\Psi(\mathbbm{1}_{{\{y_0\}}}^\infty)$ is not identical to $+\infty$. 
	
	As the maps $y\mapsto j\|y-y_0\|$ epi-converge to $\mathbbm{1}_{\{y_0\}}^\infty$, the continuity of $\Psi$ implies that $\Psi(j\|\cdot-y_0\|) \rightarrow \Psi(\mathbbm{1}_{{\{y_0\}}}^\infty)$ as $j\rightarrow\infty$. By the definition of epi-converge, point~\ref{defEpiConvConvSeq}, there exists a sequence $(x_j)_{j}$ in $\RR^n $ converging to $0$ such that $\Psi(j\|\cdot-y_0\|)[x_j]$ converges to $\Psi(\mathbbm{1}_{\{y_0\}}^\infty)[0]$ for $j\rightarrow\infty$. In particular, \eqref{eq:prfCorB} implies that
	\begin{align}\label{eq:prfCorBApprxInd}
		\Psi(j\|\cdot-y_0\|)[x_j] = j \left[c \|y_0\| + \int_{\RR^\times} \frac{\|sx_j-y_0\|-\|y_0\|}{|s|^2} d\nu(s)\right]
	\end{align}
	converges to $\Psi(\mathbbm{1}_{\{y_0\}}^\infty)[0]<\infty$ for $j\rightarrow\infty$, which is only possible if the expression in the brackets converges to $0$. Since $\|y_0\| \neq 0$ and the integrand is bounded by 
	\begin{align*}
	\left|\frac{\|sx_j-y_0\|-\|y_0\|}{|s|^2}\right|\le \frac{\|x_j\|}{|s|}, \quad s \in \RR^\times,
	\end{align*}
	with $\|x_j\|\to0$ and $|s|^{-1}$ being $\nu$-integrable, the expression in the bracket in \eqref{eq:prfCorBApprxInd} can only converge to $0$ if $c=0$.
	
	Next, let $t \in \RR^\times$ and consider the constant sequence $\tilde x_j = ty_0$. Again by the definition of epi-convergence, point~\ref{defEpiConvInfBed}, 
	\begin{align*}
		\Psi(\mathbbm{1}_{\{y_0\}}^\infty)[ty_0]\le \liminf_{j\rightarrow\infty} \Psi(j\|\cdot-y_0\|)[\tilde x_j]=\|y_0\|\liminf_{j\rightarrow\infty}j\int_{\RR^\times} \frac{|st-1|-1}{|s|^2} d\nu(s),
	\end{align*}
	and since $\Psi(\mathbbm{1}_{\{y_0\}}^\infty)[ty_0] = \infty$ for $t \neq 0$, we must have
	\begin{align*}
	 \int_{\RR^\times} \frac{|st-1|-1}{|s|^2} d\nu(s) > 0.
	\end{align*}
    Choosing $t \neq 0$ such that $|s|\cdot|t| \leq 1$ for all $s \in \supp \nu$ (using that $\supp \nu$ is compact), we obtain
    \begin{align*}
     0 < \int_{\RR^\times} \frac{|st-1|-1}{|s|^2} d\nu(s) = -t \int_{\RR^\times} \frac{1}{s} d\nu(s),
    \end{align*}
    which yields a contradiction, as we can flip the sign of $t$. We therefore conclude that for every $y_0 \in \RR^n \setminus\{0\}$ the set $\dom\Psi(\mathbbm{1}_{\{y_0\}}^\infty)$ contains a point $x_0$ distinct from $0$.

    By the first step, $\supp \GW(\Psi_{x_0}) \subseteq \{y_0\}$. If $\Psi_{x_0}=0$ for some $x_0\ne 0$, then $\Psi_x=0$ for all $x \in \RR^n \setminus\{0\}$ by $\GL(n)$-equivariance, and, by convexity, $\Psi_0=0$, so $\Psi \equiv 0$, which contradicts the assumption of non-triviality. Hence, $\Psi_{x_0}\ne 0$, and since $\supp \GW(\Psi_{x_0})=\{y_0\}$ is $\GL(n)_{x_0}$-invariant, $y_0 \in \langle x_0\rangle$, that is, there exists $\mu \in \RR^\times$ such that $y_0 = \mu x_0$. Note that the $\GL(n)$-equivariance of $\Psi$ implies that $\mu$ does not depend on $y_0 \in \RR^n \setminus\{0\}$. Consequently, $\supp \GW(\Psi_{x_0}) = \{\mu x_0\}$ for all $x_0 \in \RR^n \setminus\{0\}$.
    
    Finally, using the arguments in the proof of Theorem~\ref{mthm:CharGLEquiv}, $\nu = s^2 u$, where $u \in \DistribC(\RR)$ is given by $\GW(\Psi_{\bar e})[\phi] = u(s \mapsto \phi(s\bar e)), \phi \in C_c^\infty(\RR^n)$, and we conclude $\emptyset \neq \supp \nu \subseteq \supp u= \{\mu\}$. By \eqref{eq:prfCorB} and the fact that $\Psi(f)[x_0]$ does not depend on $f(0)$ anymore, it is then easy to see that for $\lambda=\frac{\nu(\{\mu\})}{\mu^2}>0$ we have
	\begin{align*}
		\Psi(f)[x_0]=\lambda f(\mu x_0),\quad f\in\Conv(\RR^n,\RR),
	\end{align*}
	which extends to $\Conv(\RR^n)$ by continuity.
\end{proof}

\section{Proof of Theorem~\ref{mthm:CharMonRadEquiv} and Corollary~\ref{mcor:CharRadiallyEquivWholeConv}}

In this section, we prove Theorem~\ref{mthm:CharMonRadEquiv} and show that it implies Corollary~\ref{mcor:CharRadiallyEquivWholeConv}. As in the $\GL(n)$-equivariant setting, we first show that \eqref{eq:MainThmRadiallyEquiv} well-defines an endomorphism with the claimed properties. Note that the arguments in the proof of the next theorem resemble in big parts arguments in the proof of \cite{Hofstaetter2021}*{Prop.~3.3}, which corresponds to the case $\supp \mu \subseteq \unitsurfn$. However, we do not know whether one can use \cite{Hofstaetter2021}*{Prop.~3.3} directly.

\begin{theorem}\label{thm:MonRadCandidate}
 Suppose that $\mu\in\MeasC^+(\RR^n)$ is an $\SO(n-1)$-invariant measure.
 
 \noindent
 Then the map $\Psi: \ConvO(\RR^n) \rightarrow \ConvO(\RR^n)$ defined by
 \begin{align}\label{eq:exMonRadEq}
  \Psi(f)[x] = \int_{\RR^n } f(\|x\|\vartheta_x y) d\mu(y), \quad x \in \RR^n \backslash\{0\},
 \end{align}
 and $\Psi(f)[0] = \liminf_{\|x\|\to 0} \Psi(f)[x]$ for every $f \in \ConvO(\RR^n)$ is continuous, additive, monotone, radially and $\SO(n)$-equivariant. Moreover, $\Psi$ is dually translation-invariant if and only if $\int_{\RR^n} y d\mu(y) = 0$.
\end{theorem}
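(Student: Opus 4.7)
The plan is to verify the listed properties of $\Psi$ in turn. The main obstacle is the convexity of $\Psi(f)$ for $f \in \ConvO(\RR^n)$, for which the $\SO(n-1)$-invariance of $\mu$ is crucial; once this is established, the remaining claims (lower semi-continuity, finiteness near $0$, continuity, additivity, monotonicity, equivariance, and dual translation-invariance) follow by arguments analogous to those used for Theorem~\ref{thm:GLequivCandidate} together with direct computations.

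For the pointwise well-definedness of $\Psi(f)$, fix $f \in \ConvO(\RR^n)$. Since $\mu$ has compact support and $f$ admits an affine minorant (by convexity and properness), the integrand $y \mapsto f(\|x\|\vartheta_x y)$ is bounded below by a $\mu$-integrable function, so $\Psi(f)[x] \in (-\infty, +\infty]$ is well-defined for every $x \neq 0$; the $\SO(n-1)$-invariance of $\mu$ ensures that the value does not depend on the non-unique choice of $\vartheta_x$. Setting $R := \sup_{y \in \supp \mu}\|y\|$, whenever $R\|x\| < \mathrm{dist}(0, \partial \dom f)$ the compact set $\|x\|\vartheta_x \supp \mu$ lies in $\interior \dom f$, where $f$ is Lipschitz on compact subsets by Proposition~\ref{prop:uniformLipschitz}. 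This produces a uniform $\mu$-integrable bound on the integrand and, via dominated convergence, shows that $\Psi(f)$ is finite on a neighborhood of the origin and that $\Psi(f)[x] \to f(0)\mu(\RR^n)$ as $x \to 0$. Lower semi-continuity on $\RR^n \setminus \{0\}$ follows from Fatou's lemma applied along a continuous local section $x \mapsto \vartheta_x$, so the $\liminf$ prescription at the origin yields $\Psi(f)[0] = f(0)\mu(\RR^n)$.

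The convexity of $\Psi(f)$ is the technical heart of the proof. Using the $\SO(n-1)$-invariance of $\mu$, one disintegrates $\mu$ along $\SO(n-1)$-orbits: decomposing $y = y_1 \bar e + y^\perp$ with $y^\perp \in \bar e^\perp$ and $r = \|y^\perp\|$, one obtains a measure $\tilde \mu$ on $\RR \times [0, \infty)$ such that
\begin{align*}
 \Psi(f)[x] = \int \left[\int_{\unitsurfn \cap x^\perp}\!\!\! f(y_1 x + r \|x\| u)\, d\sigma_x(u)\right] d\tilde\mu(y_1, r),
\end{align*}
where $\sigma_x$ is the uniform probability measure on $\unitsurfn \cap x^\perp$ (well-defined independently of the choice of $\vartheta_x$). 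Convexity of $\Psi(f)$ then reduces to the claim that the inner spherical average is a convex function of $x \in \RR^n\setminus\{0\}$ for every $y_1 \in \RR$ and $r \geq 0$; this step closely parallels \cite{Hofstaetter2021}*{Prop.~3.3} but must be reworked since $f$ is not $1$-homogeneous. A key identity is that for every affine $\ell(z) = \langle a, z\rangle + b$, the $\SO(n-1)$-invariance yields $\int \ell(\|x\|\vartheta_x y)\, d\mu(y) = \langle a, x\rangle \int y_1\, d\mu + b\, \mu(\RR^n)$, which is affine in $x$; a monotone approximation of $f$ by suprema of affine minorants combined with direct spherical-averaging arguments for piecewise affine $f$ then yields the general statement. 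Together with the preceding paragraph, this shows $\Psi(f) \in \ConvO(\RR^n)$.

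Continuity of $\Psi$ with respect to epi-convergence follows by an argument analogous to the corresponding step in the proof of Theorem~\ref{thm:GLequivCandidate}: for an epi-convergent sequence $f_j \to f$ in $\ConvO(\RR^n)$ and $x \neq 0$, if $\|x\|\vartheta_x \supp \mu \subseteq \interior \dom f$, Lemma~\ref{lem:epiConvEquiv}\eqref{lem:epiConvEquivUnifComp} gives uniform convergence of $f_j \to f$ on this compact set with uniformly bounded local Lipschitz constants (Proposition~\ref{prop:uniformLipschitz}), and dominated convergence yields $\Psi(f_j)[x] \to \Psi(f)[x]$; if $\|x\|\vartheta_x \supp \mu \not\subseteq \overline{\dom f}$, splitting the integrand and applying Lemma~\ref{lem:unifLowerBound} for a uniform affine lower bound shows $\Psi(f_j)[x] \to \infty$. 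The value at $0$ is continuous since $\Psi(f)[0] = f(0)\mu(\RR^n)$ is linear in $f$. The remaining properties are direct: additivity is linearity of the integral, monotonicity is positivity of $\mu$, radial equivariance uses $\vartheta_{tx} = \vartheta_x$ for $t > 0$, $\SO(n)$-equivariance uses that one may choose $\vartheta_{\eta x} = \eta \vartheta_x$ for $\eta \in \SO(n)$, and for the dual translation-invariance characterization the $\SO(n-1)$-invariance of $\mu$ forces $\int y\, d\mu$ to lie in $\RR \bar e$, so for any linear $\ell(z) = \langle a, z\rangle$ one has $\Psi(\ell)[x] = \left(\int y_1\, d\mu\right)\langle a, x\rangle$, which vanishes for all $x$ and $a$ if and only if $\int y\, d\mu = 0$.
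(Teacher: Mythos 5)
The central step of your argument -- the convexity of $\Psi(f)$ -- is not actually proved. Your orbit disintegration of $\mu$ is fine in principle (an $\SO(n-1)$-invariant compactly supported measure does disintegrate into the uniform probability measures on its orbits, each orbit being a latitude sphere inside an origin-centered sphere), and it correctly reduces the problem to showing that, for fixed $y_1,r$, the map $x \mapsto \int_{\unitsurfn \cap x^\perp} f(y_1 x + r\|x\| u)\, d\sigma_x(u)$ is convex on $\RR^n\setminus\{0\}$. But this is precisely the non-trivial content of the spherical case (Theorem~\ref{thm:HSKlasseAsplEndos}, i.e.\ \cite{Hofstaetter2021}*{Prop.~3.3}), and your sketch of it does not work: writing $f$ as a monotone limit of finite maxima of affine minorants only reduces convexity of $\Psi(f)$ to convexity of $\Psi(f_k)$ for piecewise affine $f_k$ (the supremum does not commute with the integral, so affineness of $\Psi(\ell)$ for affine $\ell$ gives nothing for a max of affine functions), and the promised ``direct spherical-averaging arguments for piecewise affine $f$'' are never supplied -- yet that is exactly the hard step. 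Your justification for reworking rather than citing the known result (``$f$ is not $1$-homogeneous'') also misreads it: Theorem~\ref{thm:HSKlasseAsplEndos} already concerns general support functions of log-concave functions, i.e.\ arbitrary elements of $\ConvO(\RR^n)$, not $1$-homogeneous functions. The paper handles this heart of the proof differently: it first reduces to finite $f$ (Moreau envelopes and monotone convergence), treats absolutely continuous $\mu$ by passing to polar coordinates so that $\Psi(f)[x]=\int_0^R \Psi_{\nu_r}(f)[rx]\,r^{n-1}dr$ with $\nu_r$ an $\SO(n-1)$-invariant measure on $\unitsurfn$, invokes Theorem~\ref{thm:HSKlasseAsplEndos} for the convexity of each $\Psi_{\nu_r}(f)$, and then passes to general $\mu$ by mollification and weak convergence, obtaining $\Psi(f)$ as a pointwise limit of convex functions. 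Either cite the spherical result for your orbit measures (after rescaling them to $\unitsurfn$, and checking that its convexity assertion does not need the centroid condition) or actually carry out the reworked proof; as written the key claim is only asserted.

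A secondary, smaller issue: in the continuity argument your case split for $x\neq 0$ is between $\|x\|\vartheta_x\supp\mu\subseteq\interior\dom f$ and $\|x\|\vartheta_x\supp\mu\not\subseteq\overline{\dom f}$, and it is not clear that the union of these two sets is dense. The paper works instead with $D_f=\{x:\|x\|\vartheta_x\supp\mu\subset\interior\dom f\}$ and proves pointwise convergence on $(D_f\setminus\{0\})\cup(\RR^n\setminus\overline{D_f})$; for $x\notin\overline{D_f}$ it uses a radial scaling $\widetilde x=\lambda x$, $\lambda<1$, to produce a point of $\|x\|\vartheta_x\supp\mu$ lying strictly outside $\overline{\dom f}$ with a compact neighborhood of positive $\mu$-measure, on which $f_j\geq k$ eventually by uniform convergence, combined with the affine lower bound of Lemma~\ref{lem:unifLowerBound} elsewhere. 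You should either adopt this dichotomy or justify that your own case split covers a dense set.
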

\begin{proof}
 Note that, if well-defined, $\Psi$ is clearly an additive, monotone, radially and $\SO(n)$-equivariant mapping. Moreover, $\Psi$ is dually translation-invariant if and only if $\Psi(\langle a, \cdot\rangle) = 0$ for every $a \in \RR^n$, which is equivalent to $\int_{\RR^n} y d\mu(y) = 0$. We therefore need to show that $\Psi(f)$ is a well-defined function in $\ConvO(\RR^n)$ for every $f \in \ConvO(\RR^n)$ and that the mapping $\Psi:\ConvO(\RR^n) \rightarrow \ConvO(\RR^n)$ is continuous.
 
 \medskip
 
First let $f \in \ConvO(\RR^n)$ and note that the lower semi-continuity of $f$ implies that $f$ is bounded from below on every compact subset of $\RR^n $. As $\supp \mu$ is compact by assumption,  the map $y \mapsto f(\|x\|\vartheta_x y)$ is therefore bounded from below for $y \in \supp \mu$ for every fixed $x \in \RR^n \setminus\{0\}$ , and thus the integral in \eqref{eq:exMonRadEq} is well-defined for every $x \in \RR^n  \setminus \{0\}$, taking values in $(-\infty,\infty]$. Moreover, \eqref{eq:exMonRadEq} does not depend on the choice of $\vartheta_x$, as $\mu$ is $\SO(n-1)$-invariant.
 
 Noting further that $\Psi(f)[x]$ only depends on the values of $f$ on the (compact) set $\|x\|\vartheta_x \supp \mu$, we conclude that $\Psi(f)[x]$ is finite whenever $\|x\|\vartheta_x \supp \mu$ is contained in $\interior \dom f$. Indeed, the continuity of $f$ on $\interior \dom f$ implies that $f$ is bounded on the domain of integration of \eqref{eq:exMonRadEq} and thus $\Psi(f)[x] < \infty$. Since $0 \in \interior \dom f$ and $\supp \mu$ is compact, $\Psi(f)$ is finite on a neighborhood of the origin (and in particular proper).
 
 \medskip
 
 Next, to prove that $\Psi(f)$ is convex, we can assume without loss of generality that $f$ is finite. Indeed, for every $f \in \ConvO(\RR^n)$ there is a sequence $f_j \in \Conv(\RR^n,\RR)$ such that $(f_j(x))_{j}$ is monotone for every $x \in \RR^n $ and such that $f_j$ epi-converges to $f$ (take, e.g., the Moreau envelope of $f$, see \cite{Rockafellar1998}*{Ch.~1.G}). Monotone convergence of the integral in \eqref{eq:exMonRadEq} then implies that
 \begin{align*}
  \Psi(f)[x] = \sup_{j \in \NN} \Psi(f_j)[x], \quad x \in \RR^n  \setminus\{0\},
 \end{align*}
 and, hence, $\Psi(f)$ is convex on $\RR^n  \setminus\{0\}$ as supremum of convex functions. Moreover, we only need to show convexity on $\RR^n  \setminus\{0\}$ since $\Psi(f)$ must then be finite and convex and therefore continuous on $B_\delta(0) \setminus \{0\}$ for some $\delta > 0$. It is easy to check that the extension of $\Psi(f)$ to $0$ by lower semi-continuity is again convex (and therefore the extension is continuous on $B_\delta$).
 
 In order to prove convexity of $\Psi(f)$ for $f \in \Conv(\RR^n,\RR)$, we first assume that $\mu$ is absolutely continuous with respect to the Lebesgue measure on $\RR^n $ with density $\rho$. Rewriting the integral in \eqref{eq:exMonRadEq} using polar coordinates yields for $x \in \RR^n  \setminus \{0\}$
 \begin{align*}
  \Psi(f)[x] = \int_{\RR^n} f(\|x\| \vartheta_x y) \rho(y) dy = \int_0^\infty \int_\unitsurfn f(\|x\|\vartheta_x r u) \rho(ru) du \, r^{n-1} dr,
 \end{align*}
 where we denote by $du$ the spherical Lebesgue measure on $\unitsurfn$. Note that, by compactness, $\supp \mu \subseteq B_R(0)$ for some $R>0$, and, hence, $\rho(ru) \equiv 0$ for $r>R$, $u \in \unitsurfn$. Letting $\nu_r$, $r \in (0,R]$, be the measure on $\unitsurfn$ with density $\rho(r\cdot)$ with respect to the spherical Lebesgue measure, we can use Theorem~\ref{thm:HSKlasseAsplEndos} to identify
 \begin{align*}
  \Psi(f)[x] = \int_0^R \Psi_{\nu_r}(f)[rx] r^{n-1} dr, \quad x\in \RR^n  \setminus \{0\},
 \end{align*}
 where $\Psi_{\nu_r}(f)$ is convex for every $r \in (0,R]$ by Theorem~\ref{thm:HSKlasseAsplEndos}. From this representation it is clear that $\Psi(f)$ is convex on $\RR^n  \setminus\{0\}$.
 
 For a general measure $\mu$ (that might not be absolutely continuous with respect to the Lebesgue measure), we use a mollification of $\mu$. Let $\phi:\RR^n  \rightarrow [0,\infty)$ be a smooth, radially symmetric function with $\supp\phi \subseteq B_1(0)$ and $\int_{\RR^n} \phi dx = 1$. Denote $\phi_\varepsilon(x) = \frac{1}{\varepsilon^n}\phi(\frac{x}{\varepsilon})$, $\varepsilon > 0$, and consider
 \begin{align*}
  (\mu \ast \phi_\varepsilon)(y) = \int_{\RR^n} \phi_\varepsilon(y-z) d\mu(z), \quad y \in \RR^n ,
 \end{align*}
 which is a smooth, non-negative function with compact support (see, e.g., \cite{Hoermander2003}*{Thm.~4.1.1}). Moreover, the radial symmetry of $\phi$ implies that $\mu \ast \phi_\varepsilon$ is $\SO(n-1)$-invariant. We can therefore use what we have proved before to see that
 \begin{align*}
  \Psi_\varepsilon(f)[x] = \int_{\RR^n} f(\|x\|\vartheta_x y) (\mu \ast \phi_\varepsilon)(y) \,dy, \quad x \in \RR^n  \setminus\{0\},
 \end{align*}
 is convex for every $f \in \Conv(\RR^n,\RR)$. Since by \cite{Hoermander2003}*{Thm.~4.1.4}, $\mu \ast \phi_\varepsilon$ converges weakly to $\mu$ for $\varepsilon \to 0^+$, $\Psi_\varepsilon(f)[x]$ converges to $\Psi(f)[x]$ for every $x \in \RR^n \setminus\{0\}$. We conclude that $\Psi(f)$ is convex on $\RR^n  \setminus\{0\}$ as pointwise limit of convex functions and, hence, as noted before, on $\RR^n $.

 Since the lower semi-continuity of $\Psi(f)$, $f \in \ConvO(\RR^n)$, is a direct consequence of Fatou's lemma (see also \cite{Hofstaetter2021}*{Prop.~3.3}), we conclude that $\Psi(f) \in \ConvO(\RR^n)$ for every $f \in \ConvO(\RR^n)$.
 
 \medskip

 It remains to prove the continuity of $\Psi$ on $\ConvO(\RR^n)$. Let $(f_j)_{j} \subseteq \ConvO(\RR^n)$ be a sequence epi-converging to $f \in \ConvO(\RR^n)$ and consider the set
 \begin{align*}
 	D_f:=\{x\in \RR^n : \|x\| \vartheta_x\supp\mu\subset \interior\dom f \text{ for some }\vartheta_x\in \SO(n)\text{ with }\|x\|\vartheta_x \bar e=x\}.
 \end{align*} 
 Note that $D_f$ contains an open ball around the origin, as $\interior\dom f$ is a neighborhood of the origin and $\supp\mu$ is compact. We will show that $\Psi(f_j)$ converges pointwise to $\Psi(f)$ on the dense set $(D_f\setminus\{0\})\cup \RR^n \setminus\overline{D_f}$, which by Lemma~\ref{lem:epiConvEquiv}(\ref{lem:epiConvEquivPWonDense}) implies that $(\Psi(f_j))_j$ epi-converges to $\Psi(f)$.
 
 \medskip
 
 First, let $x\in D_f\setminus\{0\}$ be given. By the compactness of $\supp \mu$, there exists $\varepsilon>0$ such that $A_\varepsilon:=\|x\|\vartheta_x\supp\mu +B_{\varepsilon}(0)\subset\interior\dom f$. As $A_\varepsilon$ is compact and does not intersect the boundary of $\dom f$, Lemma~\ref{lem:epiConvEquiv}(\ref{lem:epiConvEquivUnifComp}) implies that $(f_j)_j$ converges uniformly to $f$ on $A_\varepsilon$. In particular, there exists $j_0\in\NN$ such that $f_j<\infty$ on $A_\varepsilon$ for all $j\ge j_0$. Using the uniform convergence of $(f_j)_j$ and that $\supp\mu$ is compact, dominated convergence implies $\lim\limits_{j\rightarrow\infty}\Psi(f_j)[x]=\Psi(f)[x]$.
 
 \medskip
 
 Next, let $x\in \RR^n \setminus\overline{D_f}$ be given and choose $\delta>0$ such that $B_\delta(x) \subseteq \RR^n \setminus\overline{D_f}$. Taking $\widetilde x = \lambda x$ with $\lambda = 1-\frac{\delta}{2\|x\|} < 1$, we have $\widetilde x \in \RR^n \setminus\overline{D_f}$, so by assumption there exists $y \in (\|\widetilde x\|\vartheta_{\widetilde x} \supp \mu) \cap (\RR^n \setminus\interior\dom f)$. As $\interior\dom f$ is a convex neighborhood of $0$, the ray through $y$ emanating from $0$ intersects the boundary of $\dom f$ in at most one point, which must be contained in the segment $(0,y]$. Hence, $\lambda^{-1} y \in \RR^n \setminus\overline{\dom f}$, as $\lambda < 1$, and there exists a compact neighborhood $A \subseteq \RR^n  \setminus\overline{\dom f}$ of $\lambda^{-1}y$.
 
 Noting that we can choose $\vartheta_x=\vartheta_{\widetilde x}$  since $x$ and $\widetilde x$ are collinear, we also have $\lambda^{-1} y \in \|x\| \vartheta_x \supp \mu$, and, consequently, $\mu(\widetilde A)>0$, where for simplicity we denote $\widetilde A = \frac{1}{\|x\|}\vartheta_x^{-1} A$. As $f$ is bounded from below on $\|x\|\vartheta_x\supp \mu$ by semi-continuity and $f\equiv+\infty$ on $A$, this directly implies
 \begin{align*}
  \Psi(f)[x] = \int_{\RR^n} f(\|x\|\vartheta_x y) d\mu(y) = \infty.
 \end{align*}
 Moreover, $A$ does not intersect the boundary of $\dom f$, so the sequence $(f_j)_j$ converges uniformly to $f$ on $A$ by Lemma~\ref{lem:epiConvEquiv}(\ref{lem:epiConvEquivUnifComp}). As $f\equiv +\infty$ on $A$, this implies that for every $k\in\mathbb{N}$ there exists $j_k\in \NN$ such that $f_j\ge k$ on $A$ for all $j\ge j_k$. By Lemma~\ref{lem:unifLowerBound}, there further exists an affine function $g:\RR^n \rightarrow\RR$ such that $f_j\ge g$ on the compact set $\|x\|\vartheta_x\supp\mu$. Consequently, we can estimate for $j\ge j_k$
 \begin{align*}
 \Psi(f_j)[x]=&\int_{\RR^n }f_j(\|x\|\vartheta_x y)d\mu(y)=\int_{\RR^n \setminus \widetilde A}f_j(\|x\|\vartheta_x y)d\mu(y)+\int_{\widetilde A}f_j(\|x\|\vartheta_x y)d\mu(y)\\
 \ge&\int_{\RR^n \setminus \widetilde A}g(\|x\|\vartheta_x y)d\mu(y)+k\int_{ \widetilde A}d\mu(y)=C+k\mu(\widetilde A)
 \end{align*}
 for some constant $C$ independent of $j,k\in\mathbb{N}$. As $\mu(\widetilde A)>0$ by construction, we conclude that $\lim\limits_{j\rightarrow\infty}\Psi(f_j)[x]=\infty=\Psi(f)[x]$, which finishes the proof.
\end{proof}

\noindent
We are now in position to complete the
\begin{proof}[Proof of Theorem~\ref{mthm:CharMonRadEquiv}]
 By Theorem~\ref{thm:MonRadCandidate}, we are left to prove that every continuous, additive, radially and $\SO(n)$-equivariant map $\Psi:\ConvO(\RR^n) \rightarrow \ConvO(\RR^n)$ is of the form~\eqref{eq:MainThmRadiallyEquiv}.
 
 First note that $\Psi$ restricts to an endomorphism of $\Conv(\RR^n,\RR)$ by Lemma~\ref{lem:restrictRadEquiv}. We can therefore consider its family of Goodey--Weil distributions $(\GW(\Psi_x))_{x \in \RR^n }$. Fixing the point $\bar e \in \unitsurfn$, every non-zero $x \in \RR^n $ is given by $x = \|x\| \vartheta_x \bar e$, and from Lemma~\ref{lem:equivPropGWdistr} we deduce that $\GW(\Psi_{\bar e})$ is $\SO(n-1)$-invariant and
 \begin{align*}
  \GW(\Psi_x)[\phi] = \GW(\Psi_{\bar e})[\phi(\|x\| \vartheta_x \cdot)], \quad \phi \in C_c^\infty(\RR^n).
 \end{align*}
 By Lemma~\ref{lem:GWdistrMonot}, there exists a family of non-negative measures $(\mu_x)_{x\in \RR^n }$ with compact support such that $\GW(\Psi_x) = \mu_x$. Hence, $\GW(\Psi_x) = \mu_x$ is given as the pushforward of the $\SO(n-1)$-invariant measure $\mu := \mu_{\bar e}$ under the map $y \mapsto \|x\| \vartheta_x y$, $x \in \RR^n  \setminus\{0\}$. Applying Lemma~\ref{lem:GWdistrMonot} once again, we deduce that
 \begin{align*}
  \Psi(f)[x] = \int_{\RR^n} f(y) d\mu_x(y) = \int_{\RR^n} f(\|x\| \vartheta_x y) d\mu(y), \quad x \in \RR^n  \setminus\{0\},
 \end{align*}
 for every $f \in \Conv(\RR^n,\RR)$. As $\Psi(f)$ is finite and therefore continuous, 
 \begin{align*}
 \Psi(f)[0] = \lim_{\|x\|\to 0} \Psi(f)[x] = \liminf_{\|x\| \to 0} \Psi(f)[x].
 \end{align*}
 Coinciding with it on the dense subset $\Conv(\RR^n,\RR)$ and being continuous, $\Psi$ must therefore be equal to the endomorphism constructed in Theorem~\ref{thm:MonRadCandidate}.
\end{proof}

 \medskip

 Each endomorphism $\Psi$ satisfying the conditions of Theorem~\ref{mthm:CharMonRadEquiv}, which is additionally dually translation-invariant, restricts to a monotone Minkowski endomorphism. Indeed, let $K \in \convexbodies$ and define $\Phi K \in \convexbodies$ by $h(\Phi K,\cdot) = \Psi(h(K,\cdot))$. Here, the radial equivariance implies that $\Psi(h(K,\cdot))$ is again $1$-homogeneous and therefore a support function, making $\Phi$ well-defined. Continuity, monotonicity, translation-invariance and $\SO(n)$-equivariance now follow from the according properties of $\Psi$.
 
 On the other hand, Theorems~\ref{thm:KiderlenMinkEndos} and \ref{thm:HSKlasseAsplEndos} show that every monotone Minkowski endomorphism can be extended to a continuous and additive map on $\ConvO(\RR^n)$. This extension is not unique. However, the extensions of Theorem~\ref{thm:HSKlasseAsplEndos} can be characterized by their action on radially symmetric functions, where a function $f:\RR^n \rightarrow \RR$ is called radially symmetric if $f(\eta x)=f(x)$ for all $x \in \RR^n$ and $\eta \in \SO(n)$. To show this, we first need the following lemma.

\begin{lemma}\label{lem:CharMultIdRadSymm}
 The map $\Psi:\ConvO(\RR^n) \rightarrow \ConvO(\RR^n)$ defined in Theorem~\ref{thm:MonRadCandidate} for some $\SO(n-1)$-invariant measure $\mu \in \MeasC^+(\RR^n)$ satisfies
 \begin{align}\label{eq:MultIdRadSymmEq}
  \Psi(f) = c \cdot f,
 \end{align}
 for every radially symmetric $f \in \ConvO(\RR^n)$ and some fixed $c > 0$, if and only if $\supp \mu \subseteq \unitsurfn$. In this case, $c = \mu(\RR^n)$.
\end{lemma}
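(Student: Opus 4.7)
The plan is to exploit the fact that, for radially symmetric $f$, the defining integral from Theorem~\ref{thm:MonRadCandidate} simplifies dramatically, and then to extract the support of $\mu$ using a small number of explicit test functions.

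Writing any radially symmetric $f \in \ConvO(\RR^n)$ as $f(z) = \tilde f(\|z\|)$ for a suitable convex, non-decreasing $\tilde f:[0,\infty) \to \RR$, the formula from Theorem~\ref{thm:MonRadCandidate} becomes
\[ \Psi(f)[x] = \int_{\RR^n} f(\|x\|\vartheta_x y)\, d\mu(y) = \int_{\RR^n} \tilde f(\|x\|\|y\|)\, d\mu(y), \quad x \neq 0, \]
since $\|\|x\|\vartheta_x y\| = \|x\|\|y\|$. For the ``if'' direction, I would assume $\supp\mu \subseteq \unitsurfn$; then $\|y\| = 1$ on $\supp\mu$ and the integrand collapses to $\tilde f(\|x\|) = f(x)$, yielding $\Psi(f)[x] = \mu(\RR^n) f(x)$. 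This gives $\Psi(f) = c f$ with $c = \mu(\RR^n)$, positive provided $\mu \neq 0$.

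For the converse, I would assume $\Psi(f) = c f$ for every radially symmetric $f \in \ConvO(\RR^n)$ with some fixed $c > 0$, and test this identity against the three functions $f_0 \equiv 1$, $f_1(z) = \|z\|$ and $f_2(z) = \|z\|^2$, all of which lie in $\Conv(\RR^n,\RR) \subseteq \ConvO(\RR^n)$ and are radially symmetric. Using the simplified formula above and evaluating at any $x$ with $\|x\| = 1$, one obtains
\[ c = \mu(\RR^n), \qquad c = \int_{\RR^n} \|y\|\, d\mu(y), \qquad c = \int_{\RR^n} \|y\|^2\, d\mu(y). \]
Applying the Cauchy--Schwarz inequality to the middle identity gives
\[ c^2 = \Bigl(\int_{\RR^n} \|y\|\, d\mu(y)\Bigr)^2 \le \mu(\RR^n) \int_{\RR^n} \|y\|^2\, d\mu(y) = c^2, \]
so equality holds. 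Since $c > 0$, equality in Cauchy--Schwarz forces $\|y\|$ to be $\mu$-almost everywhere constant, and the constant must equal $\int_{\RR^n}\|y\|\, d\mu(y) / \mu(\RR^n) = 1$. Hence $\supp \mu \subseteq \unitsurfn$ and $c = \mu(\RR^n)$, as claimed.

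There is no real obstacle here: the argument is self-contained once the three test functions have been chosen, and every step is elementary. The only minor check is that $f_0, f_1, f_2$ are indeed elements of $\Conv(\RR^n,\RR)$, which is immediate, so Theorem~\ref{thm:MonRadCandidate} delivers the required evaluations directly, without any approximation argument or additional regularity considerations.
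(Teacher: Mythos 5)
Your proof is correct, and the converse direction takes a genuinely different route from the paper. For the ``if'' part both arguments are the same observation: on $\supp\mu\subseteq\unitsurfn$ the integrand collapses to $f(x)$ for radial $f$, giving $c=\mu(\RR^n)$ (and, like the paper, you leave implicit the trivial check that $\Psi(f)[0]=\liminf_{\|x\|\to 0}cf(x)=cf(0)$ and that $\mu\neq 0$ is forced). For the ``only if'' part the paper works at the level of the Goodey--Weil distribution: it uses the identity $\GW(\Psi_{\bar e})[\phi]=\Psi(\phi+f)[\bar e]-\Psi(f)[\bar e]$ with a radially symmetric bump function $\phi$ concentrated near a putative point $y\in\supp\mu$ with $\|y\|\neq 1$ and vanishing at $\bar e$, deriving $0<\int\phi\,d\mu=c\phi(\bar e)=0$. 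You instead avoid the distributional machinery entirely and test the identity $\Psi(f)=cf$ at $\|x\|=1$ on the three explicit radial convex functions $1$, $\|z\|$, $\|z\|^2$, obtaining the moment identities $c=\mu(\RR^n)=\int\|y\|\,d\mu=\int\|y\|^2\,d\mu$; equality in Cauchy--Schwarz (legitimate since $\mu$ has compact support, so $\|y\|\in L^2(\mu)$, and $\mu\neq 0$ because $c>0$) forces $\|y\|$ to be $\mu$-a.e.\ equal to the constant $1$, and since $\RR^n\setminus\unitsurfn$ is open this gives $\supp\mu\subseteq\unitsurfn$. Your argument is more elementary and self-contained, at the small price of being specific to this rigid polynomial family of test functions, whereas the paper's bump-function argument localizes $\supp\mu$ directly and fits the Goodey--Weil framework used throughout the rest of the paper.
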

\begin{proof}
 It is clear from \eqref{eq:exMonRadEq} that $\supp \mu \subseteq \unitsurfn$ implies \eqref{eq:MultIdRadSymmEq} for radially symmetric $f \in \ConvO(\RR^n)$ and that $c = \mu(\RR^n)$, by evaluating $\Psi(1)$.
 
 To see the converse, assume that there exists $y \in \supp \mu$ with $\|y\| \neq 1$ and let $\phi \in C_c^\infty(\RR^n)$ be a radially invariant, non-negative function such that $\phi(y)=1$, $\phi(x)=0$ for all $x \in \RR^n$ such that $|\|x\|-\|y\|| > \frac{1}{2}|\|y\|-1|$, and $\phi \geq 0$ in between. Choosing a radially symmetric $f \in \Conv(\RR^n,\RR)$ such that $\phi+f \in \Conv(\RR^n,\RR)$, we obtain
 \begin{align*}
  0 < \int_{\RR^n} \phi(x) d\mu(x) &= \GW(\Psi_{\bar e})[\phi] = \Psi(\phi+f)[\bar e] - \Psi(f)[\bar e] \\
                               &= c(\phi+f)(\bar e) - cf(\bar e) = c\phi(\bar e) = 0,
 \end{align*}
 which is a contradiction.
\end{proof}

\noindent
Theorem~\ref{mthm:CharMonRadEquiv} and Lemma~\ref{lem:CharMultIdRadSymm} now easily imply 
\begin{corollary}\label{cor:CharHSKlasseAsplEndos}
 The maps defined in Theorem~\ref{thm:HSKlasseAsplEndos} are precisely those continuous, additive, monotone, dually translation-invariant, as well as radially and $\SO(n)$-equivariant endomorphisms $\Psi$ of $\ConvO(\RR^n)$ that satisfy
 \begin{align*}
  \Psi(f) = c \cdot f,
 \end{align*}
 for every radially symmetric $f \in \ConvO(\RR^n)$ and some fixed $c > 0$.
\end{corollary}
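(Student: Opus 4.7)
The proof is a direct application of Theorem~\ref{mthm:CharMonRadEquiv} combined with Lemma~\ref{lem:CharMultIdRadSymm}, and amounts to matching up the two parametrizations. I would proceed as follows.

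For the ``if''-direction, suppose $\Psi$ is an endomorphism of $\ConvO(\RR^n)$ with all the properties listed in the statement. By Theorem~\ref{mthm:CharMonRadEquiv}, there exists a unique $\SO(n-1)$-invariant measure $\mu \in \MeasC^+(\RR^n)$ with
\begin{align*}
\Psi(f)[x] = \int_{\RR^n} f(\|x\| \vartheta_x y)\, d\mu(y), \quad x \in \RR^n \setminus\{0\},
\end{align*}
and $\int_{\RR^n} y\, d\mu(y) = 0$ by dual translation-invariance. The hypothesis $\Psi(f) = c \cdot f$ for every radially symmetric $f \in \ConvO(\RR^n)$ together with Lemma~\ref{lem:CharMultIdRadSymm} then forces $\supp \mu \subseteq \unitsurfn$, so $\mu$ is an $\SO(n-1)$-invariant element of $\Meas^+(\unitsurfn)$. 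Since $\mu$ is concentrated on the unit sphere, the condition $\int_{\RR^n} y\, d\mu(y) = 0$ reduces precisely to $\cent \mu = 0$. Comparing \eqref{eq:MainThmRadiallyEquiv} with \eqref{thm3introform}, we see that $\Psi$ coincides on $\ConvO(\RR^n)$ with the extension of the Asplund endomorphism $\Psi_\mu$ from Theorem~\ref{thm:HSKlasseAsplEndos}.

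For the ``only if''-direction, let $\Psi = \Psi_\mu$ be one of the endomorphisms from Theorem~\ref{thm:HSKlasseAsplEndos}, associated to an $\SO(n-1)$-invariant $\mu \in \Meas^+(\unitsurfn)$ with $\cent \mu = 0$. Viewed as a measure on $\RR^n$ with compact support in $\unitsurfn$, Theorem~\ref{thm:MonRadCandidate} applies and shows that $\Psi_\mu$ is a continuous, additive, monotone, radially and $\SO(n)$-equivariant endomorphism of $\ConvO(\RR^n)$; the centroid condition $\cent \mu = 0$ is equivalent to $\int_{\RR^n} y\, d\mu(y) = 0$, yielding dual translation-invariance. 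Finally, Lemma~\ref{lem:CharMultIdRadSymm} applied to $\mu$ (with $\supp \mu \subseteq \unitsurfn$) gives $\Psi_\mu(f) = c \cdot f$ for every radially symmetric $f \in \ConvO(\RR^n)$, where $c = \mu(\unitsurfn) > 0$ (positivity follows since $\Psi_\mu$ is non-trivial, equivalently $\mu \neq 0$, cf.\ the corollary after Lemma~\ref{lem:AffFctsMapToAff}).

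There is no genuine obstacle in this argument: the substantive work has already been done in establishing Theorems~\ref{mthm:CharMonRadEquiv} and \ref{thm:HSKlasseAsplEndos} and Lemma~\ref{lem:CharMultIdRadSymm}. The only point requiring a moment's care is the translation between the two normalizations of the measure (on $\unitsurfn$ versus on $\RR^n$), and in particular the equivalence of $\cent \mu = 0$ with $\int_{\RR^n} y\, d\mu(y) = 0$, which is immediate once one knows $\mu$ is supported on the sphere.
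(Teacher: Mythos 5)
Your proof is correct and follows essentially the same route as the paper, which states that Corollary~\ref{cor:CharHSKlasseAsplEndos} follows directly from Theorem~\ref{mthm:CharMonRadEquiv} and Lemma~\ref{lem:CharMultIdRadSymm} and leaves the details (the identification of the measure, the translation of $\cent\mu=0$ into $\int_{\RR^n} y\, d\mu(y)=0$, and the comparison of \eqref{eq:MainThmRadiallyEquiv} with \eqref{thm3introform}) to the reader; you have simply written them out.
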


\medskip

\noindent
In the remainder of the section, we deduce Corollary~\ref{mcor:CharRadiallyEquivWholeConv} from Theorem~\ref{mthm:CharMonRadEquiv}.

\begin{proof}[Proof of Corollary~\ref{mcor:CharRadiallyEquivWholeConv}]
 As in the proof of Theorem~\ref{mcor:CharGLEquivWholeConv}, the endomorphisms defined by \eqref{eq:MainCorRadEquivWholeConv} or by $\Psi\equiv0,\mathbbm{1}_{\{0\}}^\infty$ clearly possess all the claimed properties.
 
 Now let $\Psi$ be an endomorphism with the given properties and assume $\Psi \not \equiv 0$. Noting that we only used that the action of $\GL(n)$ is transitive on $\RR^n  \setminus \{0\}$ in the first part of the proof of Theorem~\ref{mcor:CharGLEquivWholeConv}, which is also true for the combination of the $\SO(n)$-action with radial scaling, we can conclude in the same way that $\Psi(0) = \mathbbm{1}_{\{0\}}^\infty$ or $\Psi(0) \in \Conv(\RR^n,\RR)$. Moreover, if $\Psi(0) = \mathbbm{1}_{\{0\}}^\infty$, then $\Psi \equiv \mathbbm{1}_{\{0\}}^\infty$. Indeed,  we can consider the Goodey--Weil distribution $\GW(\Psi_0)$ at $0$, which must be a multiple of the delta distribution at $0$, since in the proof of the corresponding statement of Lemma~\ref{lem:GLxInvDistr} we only used that $\{0\}$ is the unique compact orbit of the $\GL(n)$ action (which is also true in the current setting) and that the distribution is invariant under radial scalings by positive scalars.
 
 We therefore conclude that, if $\Psi$ is not constant, $\Psi$ maps $\Conv(\RR^n,\RR)$ to itself, and, by the arguments of Theorem~\ref{mthm:CharMonRadEquiv}, there exists an $\SO(n-1)$-invariant measure $\mu \in \MeasC^+(\RR^n)$, such that
 \begin{align}
  \label{eq:prfDrepPsifinite}
  \Psi(f)[x] = \int_{\RR^n} f(\|x\|\vartheta_x y) d\mu(y), \quad x \in \RR^n \setminus\{0\},
 \end{align}
 for all $f \in \Conv(\RR^n,\RR)$. It remains to prove that $\supp \mu$ consists of one point. By the $\SO(n-1)$-invariance of $\mu$, this point then must be a multiple of the fixed direction $\bar e \in \unitsurfn$, say $c \bar e$, $c \in \RR$, and $\Psi(f)[x] = \mu(\{c\bar e\})f(cx)$  for all $f \in \Conv(\RR^n,\RR)$ and by continuity for all $f \in \Conv(\RR^n)$. Noting that, if $c = 0$, there would exist $f \in \Conv(\RR^n)$ such that $\Psi(f)\equiv \infty$, we obtain $c \neq 0$, which yields the claim.
 
 In order to show that $\supp\mu$ consists of one point, we let $y \in \RR^n  \setminus \{0\}$ be arbitrary and consider $f(x) = \|x-y\|$. Then $j\cdot f$ epi-converges to $\mathbbm{1}_{\{y\}}^\infty$ and, by continuity, $j\Psi(f) = \Psi(jf) \to \Psi(\mathbbm{1}_{\{y\}}^{\infty})$, as $j \to \infty$. In fact, this convergence is pointwise due to the representation of $\Psi$ on $\Conv(\RR^n,\RR)$ in \eqref{eq:prfDrepPsifinite} and monotone convergence of the integral. Hence, $\Psi(f) \geq 0$ and $\Psi(f)[x] = 0$ if and only if $\Psi(\mathbbm{1}_{\{y\}}^{\infty})[x]=0$. As $f(w)>0$ for $w \neq y$ and $\mu$ is non-negative, $\Psi(f)[x]=0$ for $x \in \RR^n  \setminus\{0\}$ is equivalent to
 \begin{align*}
  \mu\left(\RR^n  \setminus\left\{\frac{\vartheta_x^{-1}y}{\|x\|}\right\}\right)=\mu\left(\{w \in \RR^n : \|x\|\vartheta_x w \neq y\} \right) = 0.
 \end{align*}
 As $\Psi$ is non-trivial by assumption, $\mu$ is non-zero, and we are finished if we can show that there exists $y \in \RR^n  \setminus\{0\}$ such that $\Psi(\mathbbm{1}_{\{y\}}^{\infty})[x]=0$ for some non-zero $x \in \RR^n $. Indeed, assume that this is not the case, so $\Psi(\mathbbm{1}_{\{y\}}^{\infty})=\mathbbm{1}_{\{0\}}^\infty$ for all $y \in \RR^n  \setminus\{0\}$. Then for every $f \in \Conv(\RR^n,\RR)$ and every $y \in \RR^n \setminus\{0\}$ we have
 \begin{align*}
  \Psi(f)[0] &= \Psi(f)[0] + \Psi(\mathbbm{1}_{\{y\}}^\infty)[0] = \Psi(f + \mathbbm{1}_{\{y\}}^\infty)[0] \\
             &= \Psi(f(y) + \mathbbm{1}_{\{y\}}^\infty)[0]       =  f(y)\Psi(1)[0] + \Psi(\mathbbm{1}_{\{y\}}^\infty)[0] = f(y) \mu(\RR^n),
 \end{align*}
 which, as $\mu \neq 0$, yields a contradiction by taking $f$ non-constant.
\end{proof}

\bigskip

\section{Proof of Theorem~\ref{mthm:Char1Dim}}
\label{sec:proof1Dim}

In this section, we give a proof of Theorem~\ref{mthm:Char1Dim}. We will split the proof into the ``only if''-part (Theorem~\ref{thm:1DNecesCond}) and the ``if''-part (Theorem~\ref{thm:1DSuffCond}). In the proofs, we will make use of the following characterization of continuous, dually epi-translation invariant valuations on $\RR$ by Colesanti, Ludwig and Mussnig \cite{Colesanti2019b}. Note that in \cite{Colesanti2019b}, the Hessian measure $\Theta_0(f,\cdot)$ is used instead of the Monge--Amp\`ere measure $\MongAmp{f}{\cdot}$ (see Section~\ref{subsection:examples}), which arises as a marginal of $\Theta_0$.

\begin{theorem}[\cite{Colesanti2019b}*{Cor.~6}]\label{thm:CLMCharValDim1}
	A functional $Z:\Conv(\RR,\RR)\to\RR$ is a continuous and dually epi-translation invariant valuation if and only if there exist a constant $\zeta_0 \in \RR$ and a function $\zeta_1 \in C_c(\RR)$ such that
	\begin{align*}
	Z(f) = \zeta_0 + \int_\RR \zeta_1(y) d\MongAmp{f}{y}
	\end{align*}
	for every $f \in \Conv(\RR,\RR)$. $Z$ is additive if and only if $\zeta_0 = 0$.
\end{theorem}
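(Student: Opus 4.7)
The plan is to prove the two directions separately, with the ``if'' part being direct verification and the ``only if'' part the substantive content.

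For the ``if'' direction, given $\zeta_0 \in \RR$ and $\zeta_1 \in C_c(\RR)$, define $Z$ by the claimed formula. Continuity follows from the weak convergence $\MongAmp{f_j}{\cdot} \to \MongAmp{f}{\cdot}$ under epi-convergence (Theorem~\ref{thm:defMongAmp}(2)) combined with $\zeta_1 \in C_c(\RR)$. Dual epi-translation invariance is immediate: affine functions have vanishing Monge--Amp\`ere measure, so by 1D additivity (Theorem~\ref{thm:defMongAmp}(4)) we get $\MongAmp{f+\ell+c}{\cdot}=\MongAmp{f}{\cdot}$ for every affine $\ell$ and constant $c$. The valuation property reduces to $\MongAmp{f\vee g}{\cdot}+\MongAmp{f\wedge g}{\cdot}=\MongAmp{f}{\cdot}+\MongAmp{g}{\cdot}$, which follows from the same additivity together with the pointwise identity $(f\vee g)+(f\wedge g)=f+g$ (valid whenever the left-hand side lies in $\Conv(\RR,\RR)$). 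The additivity claim is obtained by comparing $Z(f+g)$ to $Z(f)+Z(g)$ using 1D additivity: the difference equals $\zeta_0$.

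For the ``only if'' direction, set $\zeta_0:=Z(0)$ and work with $\tilde Z:=Z-\zeta_0$, still a continuous dually epi-translation invariant valuation but with $\tilde Z(0)=0$. The first and hardest step is to show that $\tilde Z$ is \emph{additive}, not merely a valuation. I would establish this via a homogeneous decomposition: combining continuity with the valuation property and dual translation invariance, show that $t\mapsto\tilde Z(tf)$ is a polynomial in $t$ of degree at most one for each $f$ (the 1D analog of McMullen's polynomial decomposition for translation-invariant valuations on convex bodies). Since $\tilde Z(0)=0$, this forces $\tilde Z(tf)=t\tilde Z(f)$, and a 1-homogeneous continuous valuation on the convex cone $\Conv(\RR,\RR)$ is then additive by a standard polarization argument. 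This is the main technical obstacle, because the valuation equation in dimension one is partially degenerate ($f\wedge g$ is rarely convex unless $f$ and $g$ are comparable); the polynomial expansion has to be derived by reduction to the piecewise linear and affinely perturbed test functions where the computation can be made explicit.

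Once additivity is in hand, Theorem~\ref{thm:exGWCompSupp} produces a Goodey--Weil distribution $u=\GW(\tilde Z)\in\DistribC(\RR)$ with $\tilde Z(f)=u(f)$ for $f\in\Conv(\RR,\RR)\cap C^\infty(\RR)$. Dual epi-translation invariance, transferred via~\eqref{eq:GWonCinfbyPsi}, is equivalent to $u$ annihilating all affine functions, i.e.\ $u(1)=0=u(x)$. The standard structure theorem for compactly supported distributions on $\RR$ with vanishing moments of orders $0$ and $1$ yields a compactly supported distribution $\zeta_1$ with $u=\zeta_1''$; concretely one can take $\zeta_1=u\ast(\cdot)_+$, and the two moment conditions then force $\zeta_1$ to vanish outside $\supp u$. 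Continuity of $\tilde Z$ in the epi-topology, combined with Lemma~\ref{lem:epiConvEquiv} and a suitable approximation argument, promotes $\zeta_1$ to a continuous function. For $f\in\Conv(\RR,\RR)\cap C^2(\RR)$, distributional integration by parts then gives
\[
\tilde Z(f)=u(f)=\zeta_1''(f)=\int_\RR \zeta_1(x)f''(x)\,dx=\int_\RR \zeta_1\,d\MongAmp{f}{\cdot},
\]
using $d\MongAmp{f}{x}=f''(x)\,dx$ from Theorem~\ref{thm:defMongAmp}(1). Both sides are continuous in $f$ (via weak convergence of Monge--Amp\`ere measures and $\zeta_1\in C_c(\RR)$), so density of $C^2$ convex functions in $\Conv(\RR,\RR)$ extends the identity to all $f$; adding back $\zeta_0$ completes the proof.
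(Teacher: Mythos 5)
First, a point of order: the paper does not prove this statement at all --- it is imported verbatim as \cite{Colesanti2019b}*{Cor.~6} and used as a black box (in the proofs of Theorems~\ref{thm:1DNecesCond} and \ref{thm:1DSuffCond}). So there is no in-paper proof to compare against; I can only assess your reconstruction on its own merits. Your ``if'' direction is fine: continuity from weak convergence of Monge--Amp\`ere measures, invariance from $\MongAmp{\ell}{\cdot}=0$ for affine $\ell$, the valuation property from $(f\vee g)+(f\wedge g)=f+g$ together with the one-dimensional additivity in Theorem~\ref{thm:defMongAmp}(4), and the $\zeta_0$ dichotomy from $Z(0)=\zeta_0$. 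The second half of your ``only if'' direction is also essentially correct and in fact mirrors manipulations the paper itself performs in Theorem~\ref{thm:1DNecesCond}: once $\tilde Z$ is known to be additive, Theorem~\ref{thm:exGWCompSupp} gives $u\in\DistribC(\RR)$, the invariance gives $u(1)=u(\id)=0$, and $\zeta_1=u\ast(\cdot)_+$, i.e.\ $\zeta_1(y)=\tilde Z((y-\cdot)_+)$, is continuous by epi-continuity of $\tilde Z$ and compactly supported precisely because of the two vanishing moments; the identity $u=\partial^2\zeta_1$ and density then finish the argument.

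The genuine gap is the step you yourself flag as the main obstacle: passing from ``continuous, dually epi-translation invariant valuation'' to ``additive''. You assert that $t\mapsto\tilde Z(tf)$ is a polynomial of degree at most one and that a polarization argument then yields additivity, but you supply neither. This is not a routine lemma: the polynomiality statement is the homogeneous decomposition theorem for dually epi-translation invariant valuations on convex functions (the functional McMullen decomposition), which is itself a substantial theorem of Colesanti--Ludwig--Mussnig and is essentially the entire content of the result you are trying to prove. Moreover, the polarization step needs more than single-variable polynomiality: to conclude $Z_1(f+g)=Z_1(f)+Z_1(g)$ from $1$-homogeneity you need the joint polynomial expansion of $(t_1,t_2)\mapsto\tilde Z(t_1f+t_2g)$, and your sketch (``reduction to piecewise linear and affinely perturbed test functions'') does not indicate how to obtain it, nor how to handle the degeneracy you correctly observe, namely that $f\wedge g$ is rarely convex. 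As written, the proof therefore reduces the cited corollary to an unproven theorem of comparable depth; you should either prove the decomposition in dimension one (where an explicit argument via functions with prescribed second distributional derivative is feasible) or cite it explicitly as an external input.
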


\noindent
We start with the ``only if''-part. In the proof, we will use the notation $y_+$ for the function $y \mapsto \max(y,0)$.
\begin{theorem}\label{thm:1DNecesCond}
 Suppose that $\Psi: \Conv(\RR,\RR) \rightarrow \Conv(\RR,\RR)$ is continuous and additive. Then the associated family of Goodey--Weil distributions of $\Psi$ is given by
 \begin{align}\label{eq:1DNecesCondRepGW}
   \GW(\Psi_x) = \partial^2_y\psi(x,\cdot), \quad x \in \RR,
 \end{align}
 where $\psi \in C(\RR^2)$ has the following properties:
 \begin{enumerate}
  \item \label{propNecesCondDim1_CondConv} $\psi(\cdot,y)$ is convex for every $y \in \RR$;
  \item \label{propNecesCondDim1_CondSupps}   For every compact subset $A\subset\RR$ there exists $R=R(A)>0$ such that
    \begin{enumerate}
        \item \label{propNecesCondDim1_CondSuppsA}$\supp\partial_x^2\psi(\cdot,y)\cap A=\emptyset$ for all $y\in\RR\setminus[-R,R]$;
        \item \label{propNecesCondDim1_CondSuppsB}$\supp\partial_y^2\psi(x,\cdot)\subseteq [-R,R]$ for all $x\in A$. %
    \end{enumerate}
 \end{enumerate}
\end{theorem}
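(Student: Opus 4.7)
The plan is to take $\psi(x,y):=\Psi((\,\cdot\,-y)_+)[x]$ as the candidate, where $(\,\cdot\,-y)_+$ denotes the convex function $z\mapsto\max(z-y,0)$. This choice is natural since $(\,\cdot\,-y)_+$ has Monge--Amp\`ere measure $\delta_y$, so by Theorem~\ref{thm:CLMCharValDim1} these functions encode the action of any continuous, dually epi-translation invariant valuation on $\Conv(\RR,\RR)$. Property~\eqref{propNecesCondDim1_CondConv} is immediate, since $\psi(\cdot,y)=\Psi((\,\cdot\,-y)_+)\in\Conv(\RR,\RR)$. Joint continuity of $\psi$ on $\RR^2$ follows from the epi-continuity of $y\mapsto(\,\cdot\,-y)_+$ in $\Conv(\RR,\RR)$, the continuity of $\Psi$, and Lemma~\ref{lem:epiConvEquiv}\eqref{lem:epiConvEquivUnifComp}, which upgrades epi-convergence to a finite convex limit into locally uniform convergence.

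The central identification $\GW(\Psi_x)=\partial^2_y\psi(x,\cdot)$ I would deduce from the identity $\phi(z)=\int_\RR\phi''(y)(z-y)_+\,dy$ for $\phi\in C_c^\infty(\RR)$, obtained by a single integration by parts. Splitting $\phi''=\phi''_+-\phi''_-$ and choosing $g(z):=Mz^2/2$ with $M$ large enough that $g+\phi$ is convex, one obtains the equality of finite convex functions
\begin{align*}
g+\phi+\int_\RR\phi''_-(y)(\,\cdot\,-y)_+\,dy\;=\;g+\int_\RR\phi''_+(y)(\,\cdot\,-y)_+\,dy.
\end{align*}
Applying $\Psi_x$, additivity together with the Goodey--Weil formula~\eqref{eq:GWonCinfbyPsi} yields
\begin{align*}
\GW(\Psi_x)(\phi)=\Psi_x(g+\phi)-\Psi_x(g)=\Psi_x\!\left(\int\phi''_+(y)(\,\cdot\,-y)_+\,dy\right)-\Psi_x\!\left(\int\phi''_-(y)(\,\cdot\,-y)_+\,dy\right).
\end{align*}
Each integral against a nonnegative $h\in C_c(\RR)$ is the epi-limit of its Riemann sums $\sum_i h(y_i)(z-y_i)_+\Delta y_i$ (pointwise convergence on the dense set $\RR$, together with Lemma~\ref{lem:epiConvEquiv}\eqref{lem:epiConvEquivPWonDense}), and each Riemann sum is a positive combination of the convex functions $(\,\cdot\,-y_i)_+$. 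Continuity of $\Psi$ together with the positive homogeneity of $\Psi_x$ forced by additivity and continuity then yields $\Psi_x(\int h(y)(\,\cdot\,-y)_+\,dy)=\int h(y)\psi(x,y)\,dy$ for such $h$; subtracting delivers $\GW(\Psi_x)(\phi)=\int\phi''(y)\psi(x,y)\,dy=\partial^2_y\psi(x,\cdot)(\phi)$.

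Property~\eqref{propNecesCondDim1_CondSuppsB} now follows immediately from Proposition~\ref{propLocalBoundSupport}. For~\eqref{propNecesCondDim1_CondSuppsA}, enlarge $A$ to a compact $A_1$ containing $A$ in its interior, let $A_1'$ be the compact set supplied by Proposition~\ref{propLocalBoundSupport} for $A_1$, and set $R:=\max_{w\in A_1'}|w|+2$. For $y>R$, the function $(\,\cdot\,-y)_+$ vanishes on a neighborhood of $A_1'$; approximating it by smooth convex $f_\varepsilon\in\Conv(\RR,\RR)\cap C^\infty(\RR)$ that also vanish on an open neighborhood of $A_1'$, Theorem~\ref{thm:exGWCompSupp} yields $\Psi_x(f_\varepsilon)=\GW(\Psi_x)[f_\varepsilon]=0$ for all $x\in A_1$, and the limit gives $\psi(x,y)=0$ on $A_1$. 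For $y<-R$, use the decomposition $(z-y)_+=(z-y)+(y-z)_+$: the second summand vanishes near $A_1'$ and contributes $0$ by the argument above, while the first is affine in $z$ and yields $\Psi(\mathrm{id})[x]-y\Psi(1)[x]$, which is affine in $x$ by Lemma~\ref{lem:AffFctsMapToAff}. In both cases $\psi(\cdot,y)$ is affine on the neighborhood $A_1$ of $A$, hence $\supp\partial^2_x\psi(\cdot,y)\cap A=\emptyset$. The main obstacle is the interchange step in the second paragraph, since $\Psi$ is only additive on the cone $\Conv(\RR,\RR)$ rather than a vector space; the auxiliary quadratic $g$ and the convex splitting of $\phi''$ are introduced precisely so that every function fed into $\Psi$ remains convex.
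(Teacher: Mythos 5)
Your argument is correct, but its central step takes a genuinely different route from the paper. The paper first subtracts the affine data $f(0)\Psi(1)[x]+(f(1)-f(0))\Psi(\mathrm{id})[x]$ to make each $\Psi_x$ dually epi-translation invariant, invokes the Colesanti--Ludwig--Mussnig classification (Theorem~\ref{thm:CLMCharValDim1}) to write this part as $\int_\RR\zeta(x,y)f''(y)\,dy$ with $\zeta(x,\cdot)\in C_c(\RR)$, and only afterwards recovers $\psi(x,y)=\Psi((y-\cdot)_+)[x]$ by convolving $\GW(\Psi_x)$ with the fundamental solution $y_+$. You instead take the cone functions as the starting point, setting $\psi(x,y)=\Psi((\cdot-y)_+)[x]$ (which differs from the paper's choice by a function affine in $y$, exactly the ambiguity allowed by Corollary~\ref{cor:Uniqueness1Dpsi}), and prove $\GW(\Psi_x)=\partial_y^2\psi(x,\cdot)$ by hand: the identity $\phi(z)=\int_\RR\phi''(y)(z-y)_+\,dy$, the splitting $\phi''=\phi''_+-\phi''_-$ with an auxiliary quadratic so that everything fed into $\Psi$ stays convex, Riemann-sum approximation in the epi-topology, and the positive homogeneity of $\Psi_x$ (which, as the paper notes, is forced by additivity and continuity). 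This makes the proof self-contained -- no external valuation classification is needed -- at the price of carrying out the interchange of $\Psi_x$ with the superposition integral explicitly, which you do correctly via Lemma~\ref{lem:epiConvEquiv}. The verification of the support properties is essentially the paper's: property~(2b) is Proposition~\ref{propLocalBoundSupport} combined with Theorem~\ref{thm:exGWCompSupp}, and for~(2a) both proofs exploit that the kernel is zero, respectively affine, on a neighborhood of $\supp\GW(\Psi_x)$ once $|y|>R$ (with the roles of $y>R$ and $y<-R$ swapped because of your different kernel), together with Lemma~\ref{lem:AffFctsMapToAff} for affinity in $x$; your enlargement of $A$ to a compact $A_1$ containing it in its interior, so that $\partial_x^2\psi(\cdot,y)$ vanishes on an open neighborhood of $A$, is a careful touch that the paper glosses over.
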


\begin{proof}
 Let $\Psi:\Conv(\RR,\RR) \to \Conv(\RR,\RR)$ be continuous and additive and denote by $u_x = \GW(\Psi_x)$ the associated family of Goodey--Weil distributions. Define the map
 \begin{align*}
  \widetilde \Psi(f)[x] = \Psi(f)[x] - f(0)\cdot\Psi(y \mapsto 1)[x] - (f(1)-f(0))\cdot\Psi(y \mapsto y)[x],
 \end{align*}
 for $f \in \Conv(\RR,\RR)$. Then, for every $x \in \RR$, the map $f \mapsto \widetilde\Psi(f)[x]$ is a continuous, additive and dually epi-translation invariant map (hence, a valuation). By Theorem~\ref{thm:CLMCharValDim1} and Theorem~\ref{thm:defMongAmp}, there exists a function $\zeta(x,\cdot) \in C_c(\RR)$ such that
 \begin{align*}
  \widetilde \Psi(f)[x] = \int_\RR \zeta(x,y) f''(y) dy, \quad f \in \Conv(\RR,\RR) \cap C^2(\RR),
 \end{align*}
 and we conclude that the Goodey--Weil distributions $u_x$ of $\Psi$ are given by
 \begin{align*}
  u_x = \Psi(y \mapsto 1)[x]\cdot \delta_0 + \Psi(y \mapsto y)[x] \cdot (\delta_1 - \delta_0) + \partial^2_y \zeta(x,\cdot), \quad x \in \RR.
 \end{align*}
 Convoluting the right-hand side with the fundamental solution $\varphi(y) = y_+$ of $\partial^2_y$, we have shown \eqref{eq:1DNecesCondRepGW} with
 \begin{align*}
  \psi(x,y) = \Psi(y \mapsto 1)[x]\cdot y_+ + \Psi(y \mapsto y)[x] \cdot ((y-1)_+ - y_+) + \zeta(x,y), \quad x,y \in \RR.
 \end{align*}
 Note that $\psi$ is clearly continuous in $y$ and that, by Lemma~\ref{lem:AffFctsMapToAff}, the first two summands are affine in $x$. Moreover, as $\psi(x,y) = (u_x \ast \varphi)(y)$, we can approximate $\varphi$ with smooth and convex functions $\varphi_\varepsilon$ to obtain
 \begin{align}\label{eq:ProofNecesCond1DCalcPsi}
  \psi(x,y) = \lim_{\varepsilon \to 0}(u_x \ast \varphi_\varepsilon)(y) = \lim_{\varepsilon \to 0} \Psi(\varphi_\varepsilon(y-\cdot))[x] =  \Psi(\varphi(y-\cdot))[x],
 \end{align}
 so $\psi(\cdot,y)$ is in particular convex for every $y\in\RR$, which implies property~\eqref{propNecesCondDim1_CondConv}. Moreover, the continuity of $\Psi$ implies that $\psi(\cdot,y)$ depends continuously on $y\in\RR$. Lemma~\ref{lem:epiConvEquiv}(\ref{lem:epiConvEquivUnifComp}) and \eqref{eq:ProofNecesCond1DCalcPsi} thus imply that $\psi \in C(\RR^2)$.
 
 Now let $A\subset\RR$ be a compact subset. Property~\eqref{propNecesCondDim1_CondSuppsB} 
 then follows from Proposition~\ref{propLocalBoundSupport}, if we choose $R = R(A)$ appropriately. In order to show property~\eqref{propNecesCondDim1_CondSuppsA}, note that the bound $\supp u_x \subseteq [-R,R]$ implies that $\Psi(\varphi(y-\cdot))[x]$ is affine for $y>R$ and zero for $y<-R$, for every $x \in A$. Indeed, if $y>R$, $\varphi(y-\cdot)$ is affine on $\supp u_x$, and if $y<-R$, $\varphi(y-\cdot)$ is zero on $\supp u_x$. Consequently, using also \eqref{eq:ProofNecesCond1DCalcPsi}, the distributions $\partial^2_x \psi(\cdot, y)$, $y \in \RR \setminus [-R,R]$, coincide on $A$ with the second derivative of an affine function and therefore vanish. This shows property~\eqref{propNecesCondDim1_CondSuppsA} and completes the proof.
\end{proof}

\noindent The second part of Theorem~\ref{mthm:Char1Dim} is the content of the following theorem.

\begin{theorem}\label{thm:1DSuffCond}
 Suppose that $\psi \in C(\RR^2)$ satisfies
 \begin{enumerate}
  \item \label{propSuffCondDim1_CondConv} $\psi(\cdot,y)$ is convex for every $y \in \RR$.
  \item \label{propSuffCondDim1_CondSupps}   For every compact subset $A\subset\RR$ there exists $R=R(A)>0$ such that
    \begin{enumerate}
        \item \label{propSuffCondDim1_CondSuppsA}$\supp\partial_x^2\psi(\cdot,y)\cap A=\emptyset$ for all $y\in\RR\setminus[-R,R]$;
        \item \label{propSuffCondDim1_CondSuppsB}$\supp\partial_y^2\psi(x,\cdot)\subseteq [-R,R]$ for all $x\in A$. %
    \end{enumerate}
 \end{enumerate}
 Then $u_x = \partial^2_y\psi(x,\cdot)$, $x \in \RR,$ defines a family of Goodey--Weil distributions of a continuous and additive map $\Psi:\Conv(\RR,\RR) \rightarrow \Conv(\RR, \RR)$.
\end{theorem}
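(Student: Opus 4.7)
The plan is to construct $\Psi$ explicitly by decomposing $\psi$ into an affine-in-$x$ part that carries the distributional singularities of $u_x$ and a remainder $\tilde\psi(x,y)$ which is compactly supported in $y$ uniformly on compact subsets of $x$. Then $\Psi(f)[x]$ will be a linear combination of $f(0)$, $f(1)$ and an integral of $\tilde\psi(x,\cdot)$ against the Monge--Amp\`ere measure $\MongAmp{f}{\cdot}$.

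\textbf{Step 1 (Decomposition of $\psi$).} Fix a compact set $A\subset\RR$ and let $R=R(A)\geq 1$ be as in property~(\ref{propSuffCondDim1_CondSupps}). By property~(\ref{propSuffCondDim1_CondSuppsB}), $\psi(x,\cdot)$ is affine on each half-line $\{\pm y>R\}$ for $x\in A$, so
\[
\psi(x,y)=\alpha_\pm(x)+\beta_\pm(x)y\quad\text{for }\pm y>R,\ x\in A,
\]
while by property~(\ref{propSuffCondDim1_CondSuppsA}), $\psi(\cdot,y)$ is affine on $A$ for $|y|>R$. Matching coefficients forces $\alpha_\pm,\beta_\pm$ to be affine on $A$. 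A consistency check across growing compact sets shows they extend to globally defined affine functions $\alpha_\pm,\beta_\pm\colon\RR\to\RR$. With
\[
C(x):=\alpha_-(x),\ \ D(x):=\beta_-(x),\ \ B(x):=\alpha_-(x)-\alpha_+(x),\ \ A(x):=\beta_+(x)-\beta_-(x)-B(x),
\]
define
\[
\tilde\psi(x,y):=\psi(x,y)-A(x)y_+-B(x)(y-1)_+-C(x)-D(x)y.
\]
A direct calculation shows $\tilde\psi(x,y)=0$ whenever $|y|>R$ and $x\in A$, so $\supp\tilde\psi(x,\cdot)\subseteq[-R,R]$ uniformly for $x\in A$. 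Moreover, $\tilde\psi\in C(\RR^2)$ and $\tilde\psi(\cdot,y)$ is convex for each $y\in\RR$, since all subtracted terms are affine in $x$.

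\textbf{Step 2 (Definition and convexity).} For $f\in\Conv(\RR,\RR)$, set
\[
\Psi(f)[x]:=A(x)f(0)+B(x)f(1)+\int_\RR\tilde\psi(x,y)\,d\MongAmp{f}{y},\quad x\in\RR.
\]
The integral converges because $\tilde\psi(x,\cdot)$ is continuous with compact support. The first two summands are affine in $x$, and the third is convex in $x$: integrating the convexity inequality for $\tilde\psi(\cdot,y)$ against the non-negative measure $d\MongAmp{f}{y}$ preserves the inequality. Hence $\Psi(f)\in\Conv(\RR,\RR)$.

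\textbf{Step 3 (Additivity and continuity).} Additivity follows from linearity of $f\mapsto f(0),f(1)$ together with additivity of the Monge--Amp\`ere measure in dimension one (Theorem~\ref{thm:defMongAmp}(4)). For continuity, if $f_j$ epi-converges to $f$, then $f_j\to f$ pointwise everywhere by Lemma~\ref{lem:epiConvEquiv} (since $\interior\dom f=\RR$), and $\MongAmp{f_j}{\cdot}\to\MongAmp{f}{\cdot}$ weakly by Theorem~\ref{thm:defMongAmp}(2). Since $\tilde\psi(x,\cdot)$ is compactly supported and continuous, weak convergence yields $\int\tilde\psi(x,y)\,d\MongAmp{f_j}{y}\to\int\tilde\psi(x,y)\,d\MongAmp{f}{y}$, so $\Psi(f_j)[x]\to\Psi(f)[x]$ for every $x$, and Lemma~\ref{lem:epiConvEquiv}(\ref{lem:epiConvEquivPWonDense}) yields epi-convergence.

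\textbf{Step 4 (Identification of Goodey--Weil distributions).} Choose a smooth convex $f$ (e.g.\ $f(y)=cy^2$ for $c$ large); for $\phi\in C_c^\infty(\RR)$ with $f+\phi\in\Conv(\RR,\RR)$, Theorem~\ref{thm:defMongAmp}(1) gives $d\MongAmp{f+\phi}{y}-d\MongAmp{f}{y}=\phi''(y)\,dy$, whence
\[
\Psi(f+\phi)[x]-\Psi(f)[x]=A(x)\phi(0)+B(x)\phi(1)+\int_\RR\tilde\psi(x,y)\phi''(y)\,dy=\langle\partial^2_y\psi(x,\cdot),\phi\rangle,
\]
using $\partial^2_y(A(x)y_++B(x)(y-1)_++C(x)+D(x)y)=A(x)\delta_0+B(x)\delta_1$ and distributional integration by parts. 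By the defining relation \eqref{eq:GWonCinfbyPsi} and uniqueness in Theorem~\ref{thm:exGWCompSupp}, $\GW(\Psi_x)=\partial^2_y\psi(x,\cdot)=u_x$.

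\textbf{Main obstacle.} The heart of the argument is Step~1: proving that the asymptotic coefficients $\alpha_\pm,\beta_\pm$ are globally affine in $x$, not merely continuous. Without this, subtracting $A(x)y_+$, etc., from $\psi$ would destroy the convexity in $x$ required in Step~2. Global affineness is forced by combining both support conditions, which together render $\psi$ bilinear in $(x,y)$ on $A\times(\RR\setminus[-R,R])$; consistency of these bilinear forms across growing compacts $A$ then promotes affineness on each compact to affineness on $\RR$.
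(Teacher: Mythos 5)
Your proposal is correct and, in substance, follows the same route as the paper's proof: subtract from $\psi$ the part that is affine in $y$ outside a compact set, with coefficients affine in $x$ thanks to condition~(\ref{propSuffCondDim1_CondSuppsA}), so that the remainder $\tilde\psi(x,\cdot)$ is continuous with compact support and can be paired with $\MongAmp{f}{\cdot}$, and then identify the Goodey--Weil distributions via \eqref{eq:GWonCinfbyPsi}. There are two genuine but minor differences. First, the paper keeps the decomposition local: on each compact convex $A$ the coefficients are only shown to be affine on $A$, the functional $\Psi_x$ is defined using that $A$, and independence of the choice of $A$ comes for free from the uniqueness of Goodey--Weil distributions in Theorem~\ref{thm:exGWCompSupp}; convexity of $\Psi(f)$ is then checked separately on each such $A$. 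You instead globalize first: the asymptotic coefficients $\alpha_\pm,\beta_\pm$ are intrinsically defined (independent of the choice of $R$), affine on every compact interval by condition~(\ref{propSuffCondDim1_CondSuppsA}), and affine functions on nested intervals that agree on overlaps patch to globally affine functions -- the step you rightly flag as the crux; your argument for it is brisk ("consistency check") but sound, and it is what licenses subtracting $A(x)y_++B(x)(y-1)_++C(x)+D(x)y$ without destroying convexity in $x$. Second, where the paper invokes the "if" direction of the Colesanti--Ludwig--Mussnig result (Theorem~\ref{thm:CLMCharValDim1}) to obtain continuity and additivity of $f\mapsto\int_\RR\tilde\psi(x,y)\,d\MongAmp{f}{y}$, you argue directly from Theorem~\ref{thm:defMongAmp} (weak convergence and one-dimensional additivity of the Monge--Amp\`ere measure); this is a slightly more self-contained route to the same conclusion, and the remaining verifications (convexity of $\Psi(f)$, continuity via Lemma~\ref{lem:epiConvEquiv}(\ref{lem:epiConvEquivPWonDense}), and $\GW(\Psi_x)=\partial^2_y\psi(x,\cdot)$ by testing against smooth convex $f$) coincide in substance with the paper's.
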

\begin{proof}
 Let $A\subset\RR$ be a compact and convex subset, $x\in A$ and set $R:=R(A)$. By property~\eqref{propSuffCondDim1_CondSuppsB}, the function $y \mapsto \psi(x,y)$ is affine on $(-\infty,-R]$ and $[R,\infty)$, so there exist $c_1(x) , c_2(x) , c_3(x) , c_4(x) \in \RR$ such that
 \begin{align*}
 	\psi(x,y)=& c_1(x) y+c_2(x)(y+1)\quad\text{for }y\ge R\\
 	\psi(x,y)=& c_3(x) \cdot (-y)+c_4(x)(-y-1)\quad\text{for }y\le -R.
 \end{align*}
 Plugging in $R, R+1$ in the first equation and $-R,-R-1$ in the second, we obtain
 \begin{align*}
 	c_1(x)=& (R+1)\psi(x,R+1) - (R+2)\psi(x,R),\\%
 	c_2(x)=& (R+1)\psi(x,R) - R\psi(x,R+1),\\%
 	c_3(x)=& (R+1)\psi(x,-R-1) - (R+2)\psi(x,-R),\\%
 	c_4(x)=& (R+1)\psi(x,-R) - R\psi(x,-R-1). %
 \end{align*}
 Then the map
 \begin{align*}
  \widetilde \psi(x,y) = \psi(x,y) - \left(c_1(x) y_+ + c_2(x) (y+1)_+ + c_3(x) (-y)_+ + c_4(x) (-y-1)_+\right)
 \end{align*}
 vanishes for $y \in \RR \setminus [-R,R]$ and $x\in A$. As $\psi$ is continuous, the coefficients $c_i(x), i\in\{1,\dots,4\},$ depend continuously on $x\in A$. Moreover, we deduce from property~\eqref{propSuffCondDim1_CondSuppsA} that $\psi(\cdot,y)$ is affine on $A$ for $y\in\RR\setminus[-R,R]$. By continuity, this holds for $y=\pm R$ as well, so in particular, $c_i(x)$ defines an affine function on $A$. Together with property~\eqref{propSuffCondDim1_CondConv}, this implies that $x\mapsto \widetilde \psi(x,y)$ is convex on $A$ for every $y \in \RR$.
  
 Applying the ``if''-part of Theorem~\ref{thm:CLMCharValDim1}, $\widetilde\psi(x,\cdot)$ defines a dually epi-translation invariant, continuous valuation on $\Conv(\RR,\RR)$ by
 \begin{align*}
  f \mapsto \int_{\RR}\widetilde\psi(x,y) d\MongAmp{f}{y}, \quad f \in \Conv(\RR,\RR).
 \end{align*}
 For $x\in A$ we define $\Psi_x:\Conv(\RR,\RR)\rightarrow\RR$ by
 \begin{align*}
  \Psi_x(f) := (c_1(x)-c_3(x))f(0) + (c_2(x)-c_4(x))f(-1)+  \int_{\RR}\widetilde\psi(x,y) d\MongAmp{f}{y}.
 \end{align*}
 Then $\Psi_x$ is continuous and additive. Moreover, it is easy to see that
 \begin{align*}
 	\GW(\Psi_x)[\phi]=\partial_y^2\psi(x,\cdot) = u_x(\phi),\quad \phi\in C^\infty_c(\RR),
 \end{align*}
 so $\Psi_x$ is uniquely determined by the distribution $u_x$ and in particular, $\Psi_x$ is independent of the choice of $A$. For every $x\in \RR$ we thus obtain a unique continuous and additive map $\Psi_x:\Conv(\RR,\RR)\rightarrow\RR$ extending the distribution $u_x$. Setting $\Psi(f)[x]:=\Psi_x(f)$, we obtain a real valued function $\Psi(f)$ for every $f\in\Conv(\RR,\RR)$.
 
 \bigskip
 
 It remains to see that the function $\Psi(f)$ is convex for all $f\in\Conv(\RR,\RR)$. Note that it is sufficient to show that this function is convex on any compact and convex subset $A\subset\RR$. But by the previous discussion, $\Psi(f)[x]$ is given for any $x\in A$ by
 \begin{align*}
 	\Psi(f)[x] = (c_1(x)-c_3(x))f(0) + (c_2(x)-c_4(x))f(-1)+  \int_{\RR}\widetilde\psi(x,y) d\MongAmp{f}{y},
 \end{align*} 
 where $c_1(x) , c_2(x) , c_3(x) , c_4(x)$ are affine functions on $A$ and $x\mapsto \widetilde\psi(x,y)$ is convex on $A$ for every $y \in \RR$. As the Monge--Amp\`ere measure of a convex function is non-negative, the right-hand side of this equation defines a convex function on $A$.
 
 \bigskip
 
 The prescription $f\mapsto \Psi(f)$ therefore defines an additive map $\Psi:\Conv(\RR,\RR)\rightarrow\Conv(\RR,\RR)$. As $f\mapsto \Psi(f)[x]$ is continuous for every $x\in \RR$, $\Psi$ is continuous by Lemma~\ref{lem:epiConvEquiv}(\ref{lem:epiConvEquivPWonDense}).
\end{proof}

\noindent
The proof of Theorem~\ref{mthm:Char1Dim} is completed by

\begin{proposition}
 Suppose that $\psi \in C(\RR^2)$ satisfies the conditions of Theorem~\ref{thm:1DSuffCond}.
 
 \noindent
 Then the map $\Psi: \Conv(\RR^n) \rightarrow \Conv(\RR^n)$, defined by Theorem~\ref{thm:1DSuffCond}, is monotone if and only if $\psi(x,\cdot)$ is convex for every $x \in \RR$.
\end{proposition}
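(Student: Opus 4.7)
The plan is to reduce the monotonicity condition to a statement about the Goodey--Weil distributions and then recognize this as the classical distributional characterization of convexity. The key tool is Lemma~\ref{lem:GWdistrMonot}, which says that a continuous, additive endomorphism of $\Conv(\RR,\RR)$ is monotone if and only if each $\GW(\Psi_x)$, $x \in \RR$, is given by a non-negative (compactly supported) Radon measure on $\RR$. Combined with the identification $\GW(\Psi_x) = \partial_y^2 \psi(x,\cdot)$ established in Theorem~\ref{thm:1DSuffCond} (where this is baked into the construction), we obtain that $\Psi$ is monotone if and only if the distribution $\partial_y^2 \psi(x,\cdot)$ is a non-negative measure for every $x \in \RR$.

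Next I would invoke the standard one-dimensional fact that a continuous function $g : \RR \to \RR$ is convex if and only if its distributional second derivative $\partial^2 g$ is a non-negative Radon measure. One direction is immediate: if $g$ is convex, then $g$ is locally Lipschitz on the interior of its (here, full) domain, its one-sided derivatives $g'_+$ are non-decreasing, and $\partial^2 g$ coincides with the Lebesgue--Stieltjes measure of $g'_+$, which is non-negative. Conversely, if $\partial^2 g$ is a non-negative measure $\mu$, then for every mollifier $\rho_\varepsilon$ the convolution $g_\varepsilon := g \ast \rho_\varepsilon$ is smooth with $g_\varepsilon'' = \mu \ast \rho_\varepsilon \geq 0$, hence convex; letting $\varepsilon \to 0^+$ and using continuity of $g$, we conclude that $g$ is convex as a pointwise limit of convex functions.

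Applying this fact pointwise in the first variable $x \in \RR$ to the continuous function $y \mapsto \psi(x,y)$ yields the desired equivalence: $\partial_y^2 \psi(x,\cdot)$ is a non-negative measure for every $x \in \RR$ if and only if $\psi(x,\cdot)$ is convex for every $x \in \RR$. Combining this with the reduction in the first paragraph finishes the proof.

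I do not foresee a real obstacle here; the only mild subtlety is making sure the ``$\partial_y^2 \psi(x,\cdot)$ non-negative $\Rightarrow$ $\psi(x,\cdot)$ convex'' direction is justified rigorously (via mollification), but this is completely standard and can be given in one or two lines as above.
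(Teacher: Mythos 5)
Your proposal is correct and follows essentially the same route as the paper: reduce monotonicity to non-negativity of the Goodey--Weil distributions $\GW(\Psi_x)=\partial_y^2\psi(x,\cdot)$ via Lemma~\ref{lem:GWdistrMonot}, then use the equivalence between convexity of a continuous function and non-negativity of its distributional second derivative. The only difference is that you prove this last standard fact by mollification, whereas the paper simply cites H\"ormander (Thm.~4.1.6).
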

\begin{proof}
 By Theorem~\ref{thm:1DSuffCond}, the Goodey--Weil distributions of $\Psi$ are given by $\partial_y^2 \psi(x,\cdot)$, and, by Lemma~\ref{lem:GWdistrMonot}, $\Psi$ is monotone if and only if its Goodey--Weil distributions are given by non-negative measures. Hence, $\Psi$ is monotone if and only if $\partial_y^2\psi(x,\cdot)$ is non-negative, which is equivalent to $\psi(x,\cdot)$ being convex, by \cite{Hoermander2003}*{Thm.~4.1.6}.
\end{proof}

\medskip

\noindent
Let us also add the following note concerning the uniqueness of $\psi$.
\begin{corollary}
	\label{cor:Uniqueness1Dpsi}
	Let $\psi,\tilde{\psi}\in C(\RR^n)$ satisfy the properties in Theorem \ref{thm:1DSuffCond}. Then the induced endomorphisms of $\Conv(\RR,\RR)$ coincide if and only if $\psi(x,\cdot)-\tilde{\psi}(x,\cdot)$ is affine for all $x\in\RR$.
\end{corollary}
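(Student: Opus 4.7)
The plan is to use the formula $\GW(\Psi_x) = \partial_y^2 \psi(x,\cdot)$ from Theorem \ref{thm:1DSuffCond} (or equivalently, the representation in Theorem \ref{thm:1DNecesCond}) together with the uniqueness of the Goodey--Weil distribution asserted in Theorem \ref{thm:exGWCompSupp}. In effect, the corollary reduces to the fact that the kernel of the distributional operator $\partial_y^2$ on $C(\RR)$ consists exactly of the affine functions.

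For the ``if'' direction, I would first assume that, for every $x \in \RR$, the function $y \mapsto \psi(x,y) - \tilde\psi(x,y)$ is affine. Then $\partial_y^2\psi(x,\cdot) = \partial_y^2\tilde\psi(x,\cdot)$ as elements of $\DistribC(\RR)$, so the associated Goodey--Weil families $(\GW(\Psi_x))_{x\in\RR}$ and $(\GW(\tilde\Psi_x))_{x\in\RR}$ coincide pointwise in $x$. By the uniqueness clause of Theorem \ref{thm:exGWCompSupp}, this forces $\Psi_x = \tilde\Psi_x$ for every $x$, hence $\Psi = \tilde\Psi$ on $\Conv(\RR,\RR)$.

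For the ``only if'' direction, I would start from the assumption $\Psi = \tilde\Psi$. Reading off the associated Goodey--Weil distributions via \eqref{eq:GWonCinfbyPsi} gives $\GW(\Psi_x) = \GW(\tilde\Psi_x)$ for every $x \in \RR$, and therefore $\partial_y^2(\psi(x,\cdot) - \tilde\psi(x,\cdot)) = 0$ in $\DistribC(\RR)$. Since $\psi(x,\cdot) - \tilde\psi(x,\cdot)$ is continuous, a standard regularity statement for distributions (e.g.\ \cite{Hoermander2003}*{Thm.~4.4.1}, or integration against test functions combined with the fact that any solution of $v'' = 0$ in $\mathcal{D}'(\RR)$ is affine) implies that $y\mapsto \psi(x,y)-\tilde\psi(x,y)$ is affine for each fixed $x$.

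There is no genuine obstacle here; the only point that needs care is the elementary distributional fact that the only continuous functions annihilated by $\partial_y^2$ are affine, which is standard.
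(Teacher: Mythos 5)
Your proposal is correct and follows essentially the same route as the paper: the endomorphisms coincide if and only if the Goodey--Weil distributions $\partial_y^2\psi(x,\cdot)$ and $\partial_y^2\tilde\psi(x,\cdot)$ agree for every $x$ (by the uniqueness in Theorem~\ref{thm:exGWCompSupp}), which happens exactly when $\partial_y^2(\psi(x,\cdot)-\tilde\psi(x,\cdot))=0$, i.e.\ when the difference is affine in $y$. The only cosmetic remark is that the elementary fact that distributional solutions of $v''=0$ are affine suffices, so no deeper regularity theorem is needed.
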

\begin{proof}
	$\psi$ and $\tilde{\psi}$ define the same endomorphism if and only if $\partial_y^2\psi(x,\cdot)=\partial_y^2\psi(x,\cdot)$ as distributions, that is, if $\partial_y^2(\psi(x,\cdot)-\tilde{\psi}(cx,\cdot))=0$. This is the case if and only if $\psi(x,\cdot)-\tilde{\psi}(cx,\cdot)$ is an affine function.
\end{proof}

\noindent 
We complete the section with a continuation of Example~\ref{ex:IntPhiT} in view of Theorem~\ref{mthm:Char1Dim}.

\begin{example}
 For the endomorphism $\Psi_\varphi$ from Example~\ref{ex:IntPhiT}, $\varphi \in \Conv(\RR,\RR)$, the function $\psi$ from Theorem~\ref{thm:1DNecesCond} is given up to addition of affine functions by
 \begin{align*}
  \psi(t,s) = \begin{cases}
               0, & \text{for } |s| \geq \varphi(t),\\
               \frac{(s+\varphi(t))^2}{2} - 2\varphi(t) s_+, & \text{for } |s| < \varphi(t).
              \end{cases}
 \end{align*}
\end{example}

\bibliographystyle{abbrv}

\bibliography{./ArtikelAsplEndomorphismen_v20220719_arxivSubmission}

\end{document}